\documentclass[onefignum,onetabnum]{siamart171218}

\usepackage{amsmath}
\usepackage{amsfonts}
\usepackage{amsopn}
\usepackage{cases}
\usepackage{booktabs}
\usepackage{graphicx}
\usepackage{enumerate}
\usepackage{xcolor}
\usepackage{subcaption}
\usepackage{multirow}

\newsiamremark{remark}{Remark}

\newcommand{\tr}{\mathrm{tr}}
\newcommand{\ob}{\mathcal{OB}}
\newcommand{\diag}{\mathrm{diag}}
\newcommand{\rank}{\mathrm{rank}}
\newcommand{\cgt}[1]{#1^{*}} 
\newcommand{\mi}{\imath}
\newcommand{\rep}{\mathrm{Re}}  
\newcommand{\imp}{\mathrm{Im}}  
\newcommand{\fnorm}[1]{\left\|#1\right\|_\mathrm{F}}  
\newcommand{\twonorm}[1]{\left\|#1\right\|_2}

\newcommand{\bbC}{\mathbb{C}}
\newcommand{\bbR}{\mathbb{R}}

\newcommand{\bra}[1]{\langle #1|}
\newcommand{\ket}[1]{|#1 \rangle}
\newcommand{\veps}{\varepsilon}

\newcommand{\hH}{\hat{H}}
\newcommand{\hU}{\hat{U}}

\newcommand{\tX}{\widetilde{X}}

\newcommand{\tU}{\widetilde{U}}
\newcommand{\tA}{\widetilde{A}}
\newcommand{\tB}{\widetilde{B}}
\newcommand{\tH}{\widetilde{H}}
\newcommand{\tV}{\widetilde{V}}
\newcommand{\tLambda}{\widetilde{\Lambda}}
\newcommand{\tSigma}{\widetilde{\Sigma}}
\newcommand{\ta}{\tilde{a}}
\newcommand{\tlambda}{\tilde{\lambda}}
\newcommand{\tx}{\tilde{x}}
\newcommand{\tsigma}{\tilde{\sigma}}

\newcommand{\bU}{\bar{U}}
\newcommand{\bV}{\bar{V}}
\newcommand{\bSigma}{\bar{\Sigma}}
\newcommand{\blambda}{\bar{\lambda}}

\newcommand{\moleculeHtwo}{\mathrm{H}_2}

\title{Landscape Analysis of Excited States Calculation over Quantum
Computers}

\headers{Landscape Analysis of Excited States VQE}{Hengzhun Chen, Yingzhou Li,
    Bichen Lu, Jianfeng Lu}

\author{Hengzhun Chen\thanks{School of Mathematical Sciences, Fudan
    University (\email{hengzhunchen21@m.fudan.edu.cn}).} 
\and Yingzhou Li\thanks{School of Mathematical Sciences, Shanghai Key
    Laboratory for Contemporary Applied Mathematics, Fudan University and
    Key Laboratory of Computational Physical Sciences, Ministry of Education
    (\email{yingzhouli@fudan.edu.cn}).} 
\and Bichen Lu\thanks{Shanghai Center for Mathematical Sciences, Fudan
    University (\email{bclu18@fudan.edu.cn}).} 
\and Jianfeng Lu\thanks{Department of Mathematics, Department of Physics
    and Department of Chemistry, Duke University
    (\email{jianfeng@math.duke.edu}).}}

\begin{document}

\maketitle

\begin{abstract}
    The variational quantum eigensolver (VQE) is one of the most promising
    algorithms for low-lying eigenstates calculation on Noisy
    Intermediate-Scale Quantum (NISQ) computers. Specifically, VQE has
    achieved great success for ground state calculations of a Hamiltonian.
    However, excited state calculations arising in quantum chemistry and
    condensed matter often requires solving more challenging problems than
    the ground state as these states are generally further away from a
    mean-field description, and involve less straightforward optimization
    to avoid the variational collapse to the ground state. Maintaining
    orthogonality between low-lying eigenstates is a key algorithmic
    hurdle. In this work, we analyze three VQE models that embed
    orthogonality constraints through specially designed cost functions,
    avoiding the need for external enforcement of orthogonality between
    states. Notably, these formulations possess the desirable property
    that any local minimum is also a global minimum, helping address
    optimization difficulties. We conduct rigorous landscape analyses of
    the models' stationary points and local minimizers, theoretically
    guaranteeing their favorable properties and providing analytical tools
    applicable to broader VQE methods. A comprehensive comparison between
    the three models is also provided, considering their quantum resource
    requirements and classical optimization complexity.
\end{abstract}

\begin{keywords}
    Variational quantum eigensolver, excited states calculation,
    landscape analysis, oblique manifold
\end{keywords}

\section{Introduction}

With the availability of near-term quantum devices and significant
advancements in quantum supremacy experiments \cite{arute2019quantum,
    zhong2020quantum}, quantum computing has gained substantial attention from
various scientific disciplines in recent years. Algorithms designed for
these restricted devices, known as noisy intermediate-scale quantum (NISQ)
devices \cite{preskill2018quantum}, have distinct characteristics: they
operate on a small number of qubits, exhibit some level of noise
resilience, and often employ hybrid approaches that combine quantum and
classical computing steps. An example is the variational quantum
eigensolver (VQE) \cite{mcclean2016theory, peruzzo2014variational}, which
shows particular promise within the NISQ paradigm.

In general, VQE aims to variationally determine an upper bound to the
ground state energy of a Hamiltonian, which enables important energetic
predictions for molecules and materials \cite{deglmann2015application}.
Namely, given a Hamiltonian $\hH$ with any normalized trial wavefunction
$\ket{\psi}$, the ground state energy $E_0$ associated with $\hH$ is
bounded by $E_0 \leq \bra{\psi}\hH\ket{\psi}$. The goal of VQE is to find
a parametrization of $\ket{\psi}$ such that the expectation value of $\hH$
is minimized. Note that the normalized trial wavefunction $\ket{\psi}$
over a quantum computer is implemented by $\hU(\theta)\ket{0}$, where
$\hU(\theta)$ is a parameterized unitary operation. We can rewrite the VQE
optimization problem with corresponding cost function as
\begin{equation*}
    E_{\text{VQE}} = \min_{\theta} ~ \bra{0} \hU^{\dagger}(\theta) \hH \hU(\theta) \ket{0}.
\end{equation*}
The hybrid nature of VQE arises from the fact that the expectation value can be
computed on a quantum device while the minimization problem with respect
to $\theta$ is computed on a classical computer.

Compared to other quantum eigensolvers, such as quantum phase estimation
(QPE) \cite{abrams1999quantum,aspuru2005simulated,kitaev1995quantum},
which require circuit depths far beyond the capabilities of near-term
intermediate-scale quantum (NISQ) devices, the variational quantum
eigensolver (VQE) offers a more NISQ-friendly approach. VQE relies on
shorter circuit depths and fewer qubits but comes at the cost of
increased number of measurements, parameter optimization on a classical
computer and the need for a predefined ansatz to approximate
eigenstates. The choice of an appropriate ansatz is a critical component
of the VQE workflow and has been the focus of significant research in
recent years.

The hardware-efficient ansatz (HEA) \cite{kandala2017hardware} directly
parameterizes quantum states using native quantum gates, making it
well-suited for certain problem-specific scenarios. However, it is often
inadequate for large-scale chemical problems due to its limited
scalability. In contrast, the unitary coupled cluster (UCC) ansatz
\cite{peruzzo2014variational}, along with its extensions such as UCCSD
\cite{romero2018strategies}, UCCGSD \cite{lee2018generalized}, and
k-UpCCGSD \cite{lee2018generalized}, has demonstrated strong numerical
accuracy with only linear scaling in quantum resource requirements. This
makes UCC-based ansatzes a promising trade-off between computational
cost and accuracy. Recent studies
\cite{grimsley2019adaptive,ryabinkin2018qubit} have also explored
adaptive ansatz structures, which allow the ansatz dimensions to vary
dynamically. These approaches aim to provide reliable alternatives or
improvements over fixed-structure ansatzes, further enhancing VQE's
flexibility and efficiency.

A promising application for VQE is ab initio electronic structure
calculation. In particular, VQE solves the many-body problem under the
full configuration interaction (FCI) framework \cite{blunt2015excited,
    holmes2017excited, li2020optimal, schriber2017adaptive,
    wang2019coordinate}, where the time-independent Schr\"odinger equation is
discretized exactly and the ground state is computed by solving the
standard eigenvalue problem exponential-scaling with the number of
electrons in the system. The exponential cost of classical FCI fits well
with capabilities of quantum algorithms. Accordingly, quantum algorithms
for ground state calculation under the FCI framework have been extensively
explored over recent decades \cite{abrams1999quantum, aspuru2005simulated,
    bauer2020quantum, mcardle2020quantum, peruzzo2014variational}. Among them,
VQE is a quantum-classical hybrid approach with the shortest circuit
depth, which makes it the most widely applied quantum eigensolver on NISQ
devices.

In addition to the ground state, the consideration of excited states is
also crucial in quantum chemistry. However, computing excited states is
generally a more challenging task, due to the excited states generally
being further away from a mean-field description, as well as less
straightforward optimization to avoid the variational collapse to the
ground state. Existing quantum algorithms for excited states computation
can be broadly divided into two main types of methods: the
perturbation-based approaches and the variational approaches.

Perturbation-based methods always start from the ground state via VQE and
then evaluate the expansion values on single excited states from the
ground state. With these expansion values they further utilize different
approaches to obtain the low-lying excited state energies. The
representatives in this category could be the quantum subspace expansion
(QSE) method \cite{colless2018computation, ganzhorn2019gate,
    mcclean2017hybrid} and quantum equation of motion (qEoM)
\cite{ollitrault2020quantum}.

For the variational approach, it uses modified VQE cost functions to afford a
fully variational flexibility and maintain the orthogonality of the
low-lying eigenstates. These methods have the advantage of avoidance for
the limitations and biases of perturbation-based methods, but usually come
with a higher cost of quantum resources and the restriction to a specific
ansatz chosen. For instance, the folded spectrum method
\cite{mcclean2016theory} folds the spectrum of a Hamiltonian operator via
a shift-and-square operation, i.e., $\hH \rightarrow (\hH-\lambda I)^2$,
where $\lambda$ is a chosen number and as such the ground state of
$(\hH-\lambda I)^2$ is the eigenstate of $\hH$ whose eigenvalue is closest to
$\lambda$. Then $(\hH-\lambda I)^2$ is used as the Hamiltonian operator in
VQE to obtain the desired excited states. The witness-assisted variational
eigenspectra solver \cite{santagati2017finding,santagati2018witnessing}
incorporates an entropy term in the objective function to prevent
variational collapse back to the ground state by requiring the minimized
excited state to be close to its initial state, which is defined as an
excitation from the ground state. The quantum deflation method
\cite{higgott2019variational,jones2019variational} adopts the overlapping
term between the target state and the ground state as a penalty term in
the objective function to keep the target state as orthogonal as possible
to the ground state.

While the methods above all have their classical algorithm counterpart, two
methods, subspace search VQE (SSVQE) \cite{nakanishi2019subspace} and
multistate contracted VQE (MCVQE) \cite{parrish2019quantum} leverage a unique
property of quantum computing to maintain the orthogonality between
eigenstates: all quantum operations are unitary and a unitary
transformation will not change the orthogonality of the states it is
applied to. They first prepare $K$ non-parameterized mutually orthogonal
states $\ket{\phi_1}, \ket{\phi_2}, \ldots, \ket{\phi_K}$ and then
apply a parameterized unitary circuit $\hU_{\theta}$ to these states.
Hence, we can minimize the objective function
\begin{equation*}
    \sum_{k=1}^K \bra{\phi_k} \hU_{\theta}^{\dagger} \hH \hU_{\theta} \ket{\phi_k},
\end{equation*}
to obtain the desired eigenstates simultaneously. However, energies
associated with $\hU_{\theta}\ket{\phi_1}$, $\ldots$,
$\hU_{\theta}\ket{\phi_K}$ may not be ordered. Hence, SSVQE introduces a
min-max problem to obtain a specific excited state and a weighted
objective function to preserve the ordering of eigenstates in some further
extensions. Instead, MCVQE uses a classical post-processing to extract the
ordered eigenstates from the eigen-subspaces obtained from the
optimization procedure. Specifically, it gives the approximated
eigenstates in the form of $\ket{\psi_k}=\sum_l \hU_{\theta} \ket{\phi_l}
    V_{lk}$, where $V_{lk}$ is the $(l,k)$-th entry of a unitary matrix
$V$, which is the eigenvector matrix of the entangled contracted
Hamiltonian $H_{kl}:= \bra{\phi_k} \hU^\dagger_{\theta} \hH \hU_{\theta}
    \ket{\phi_l}$.

As discussed, maintaining orthogonality between low-lying eigenstates is a
central algorithmic challenge for excited state computations, regardless
of whether orthogonality constraints are addressed explicitly or
implicitly. Motivated by the quantum orbital minimization method (qOMM)
\cite{bierman2022quantum}, in this work we will theoretically analyze
several approaches that embed the orthogonality constraints into the
cost function itself, resulting in an ``unconstrained optimization" that fits
better to the quantum computer. Specifically, we obtain the energy
landscape of these methods (including the qOMM), which provides
theoretical guarantees for the favorable properties of the corresponding
quantum algorithms.

Now we give the mathematical formulation of the excited states quantum
eigensolver problem in matrix notation for later discussion. Given
Hermitian matrix $A \in \bbC^{n\times n}$, our target is to find the
smallest $p$ eigenpairs of $A$, denoted as
\begin{equation*}
    A X = X \Lambda, \quad X \in \bbC^{n\times p},
\end{equation*}
where $\Lambda$ is the diagonal eigenvalue matrix. In the quantum
computing setting, each column of $X$ is represented by ansatz like
$\ket{\psi_i}=\hU_{\theta_i} \ket{\phi_i}$ with $\hU_{\theta_i}$ being a
unitary operator for $i=1, \ldots, p$. If quantum error is not taken into
account, each column of $X$ will always be of unit length, which gives the
constraints for diagonal elements of $\cgt{X}X$. Further, this constraint
corresponds to the oblique manifold, which is defined as ~\footnote{For
    the sake of notation, we adopt $\diag(\cdot)$ similar to the MATLAB
    ``diag'' function, i.e., $\diag(v)$ is a square diagonal matrix with the
    entries of vector $v$ on the diagonal and $\diag(A)$ is a column vector of
    the diagonal entries of $A$.}
\begin{equation} \label{def:obli_mani}
    \ob(n,p) = \left\{ X \in \bbC^{n\times p} : \text{diag}(X^* X) = \mathbf{1} \right\},
\end{equation}
where $\mathbf{1}$ denotes an all-one vector of length $p$. Thus, we
emphasize that although we apply the unconstrained optimization problem
over a quantum computer, the quantum computer itself puts the extra oblique
manifold constraints to the optimization problem due to the normality
condition of any quantum state. Moreover, we remark that since the
representability of ansatz circuit is typically sufficient, i.e., the
variational space is rich enough, especially for the chemical systems, we
ignore the ansatz design and investigate the energy landscape of VQE cost
function with respect to $X \in \ob(n, p)$ directly.

Consider the well known optimization formulation of classical eigensolver,
~\footnote{We use $I_p$ to denote the identity matrix of size $p$-by-$p$.
    For notation simplicity, we also use $I$ to denote the identity matrix
    with size consistent to the matrix operations.}
\begin{equation*}
    \min_{X\in \bbC^{n\times p}} \tr(\cgt{X}AX), \quad \text{s.t.} \quad \cgt{X}X=I_p.
\end{equation*}
To address the orthogonal constraints for excited states computation, we
introduce the three models below as examples; each provides a method to
embed the orthogonality constraint into the objective function, while
the oblique manifold constraint arises from the nature of quantum
computers,
\begin{align}
     & \min_{X\in \ob(n,p)} \tr \left( (2I - X^*X) X^*AX\right), \tag{qOMM}                                \\
     & \min_{X\in \ob(n,p)} \frac{1}{2}\tr(\cgt{X}AX) + \frac{\mu}{4} \fnorm{\cgt{X}X- I}^2, \tag{qTPM}    \\
     & \min_{X\in \ob(n,p)} \tr(\cgt{X}AX) + \mu_ 1 \sum_{i< j} \left| (\cgt{X}X)_{ij} \right|, \tag{qL1M}
\end{align}
where $\fnorm{\cdot}$ denotes the Frobenius norm and $\mu, \mu_1 > 0$
are finite penalty constants. Each objective function above leads to a
quantum eigensolver. Additionally, qOMM and qTPM require matrix $A$ to
be negative definite Hermitian, while qL1M works for any
Hermitian matrix $A$ and requires $p < n$. We note that the first model
has been developed as a hybrid quantum-classical algorithm, termed the
quantum orbital minimization method (qOMM) \cite{bierman2022quantum}.
The latter two models (qTPM and qL1M) have their own favorable properties
and are promising candidates for VQE algorithms. In this work, we analyze the
energy landscape of the proposed models. As the main contribution, we
present theorems that characterize the stationary points and local
minimizers of all three models. Notably, all three models possess the
favorable property that any local minimum is also a global minimum.

The rest of the paper is organized as follows. In \cref{sec:landscape},
all three models will be investigated in detail. A formal statement,
together with their landscape analysis and algorithm frameworks are
given. The complete proof of the theorems will be presented in
the appendix. In \cref{sec:numerical}, we present simulated quantum
numerical results to demonstrate the convergence of the VQE algorithms.
Furthermore, we compare their efficiency in terms of quantum resource
requirements and classical optimization complexity. Finally,
\cref{sec:conclusion} concludes the paper and discusses future work.

\section{Optimization Landscape and Algorithms in VQE} \label{sec:landscape}

In this section, we provide a detailed landscape analysis and the
algorithm framework of the three proposed formulations. Proofs of the
theorems stated here are deferred to the appendix. Specifically,
we characterize and contrast properties of stationary points and local
minimizers for these formulations. This lays theoretical groundwork for
understanding their algorithmic behavior and deriving optimization
algorithms.

\subsection{qOMM}

The original orbital minimization method (OMM) was initially developed
to solve for the low-lying eigenspace of a negative definite Hermitian
matrix $A$ in Kohn-Sham density functional theory
\cite{mauri1993orbital, ordejon1993unconstrained}. It employs the energy
functional given by
\begin{equation*}
    E_0(X) = \tr\left( (2I - \cgt{X} X ) \cgt{X} A X \right),
\end{equation*}
where $X$ represents a matrix of orbitals. Notably, OMM possesses a few
favorable properties \cite{LU201787}, which provides some benefits for
the algorithm design. First, even though the orthogonality constraint
among the columns of $X$ is not explicitly enforced, the minimizers of
$E_0(X)$ inherently consist of mutually orthogonal vectors. Second,
although the energy functional is non-convex, each local minimum is also
a global minimum.

OMM has been generalized over quantum computer as a VQE, and it leads to
the qOMM method, where the energy landscape is unknown but numerically
performs well. Correspondingly, we aim to characterize the stationary points
and local minimizers of qOMM, which are given by the optimization problem:
\begin{equation} \label{eq:qomm}
    \min_{X\in \ob(n,p)} E_0(X)=\tr \left( (2I - \cgt{X}X) \cgt{X}AX\right).
\end{equation}
In contrast to the straightforward characterization of stationary points
for the unconstrained optimization in OMM, \cref{thm:stationary_qomm}
reveals a more intricate structure of the stationary points for qOMM. This
subtle result for stationary points can then be leveraged to deduce the
desired property of local minimizers stated in \cref{thm:local_min_qomm}.

\begin{theorem} \label{thm:stationary_qomm}
    Given a negative definite Hermitian matrix $A$, stationary
    points of qOMM \eqref{eq:qomm} take the form $XP$, where $P \in
        \bbR^{p \times p}$ is an arbitrary permutation matrix, $X \in
        \bbC^{n \times p}$ admits a block structure, $X = \left( X_1,
        \cdots, X_k \right)$, and $k$ is the number of blocks. For each
    block $X_i \in \bbC^{n \times p_i}$, we denote the reduced SVD as
    $X_i = U_i \Sigma_i V_i^*$ and the rank as $r_i$, where $\Sigma_i =
        \diag(\sigma_{i1}, \dots, \sigma_{ir_i})$ is the nonzero singular
    value matrix. Matrix $U$ is composed of column bases of $X_i$s,
    i.e., $U = \left( U_1, \cdots, U_k \right)$. The
    following conditions hold.
    \begin{enumerate}
        \item The matrix $U$ is partial unitary and diagonalize $A$,
              i.e., $U^*U = I$ and $U^* A U$ is diagonal.
        \item Denote the $j$-th diagonal entry of $U_i^* A U_i$ as
              $\ta_{ij}$. Singular values are,
              \begin{equation*}
                  \sigma_{ij} =
                  \sqrt{1 + \frac{p_i - r_i}{\ta_{ij} \cdot
                          \sum_{\ell=1}^{r_i}\frac{1}{\ta_{i \ell}}}} \geq 1.
              \end{equation*}
              Furthermore, if the $j$-th column $u_{ij}$ of $U_i$ is not an
              eigenvector of $A$, then $\sigma_{ij}=\sqrt{2}$.
        \item At most one block $X_i$ is of full rank, and then
              $X_i = U_i V_i^*$;
        \item The right singular vector matrix $V_i \in \bbC^{p_i\times
                      r_i}$ is partial unitary such that $X_i \in \ob(n, p_i)$. The
              existence of $V_i$ is guaranteed for any given $U_i$.
    \end{enumerate}
    In addition, any $X\in \bbC^{n\times p}$ satisfying the conditions above
    is a stationary point of qOMM \eqref{eq:qomm}.
\end{theorem}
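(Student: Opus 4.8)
The plan is to compute the Riemannian gradient of $E_0$ on the oblique manifold $\ob(n,p)$, set it to zero, and extract structure by analyzing the resulting equations through the singular value decomposition of $X$. First I would write the Euclidean gradient of $E_0(X) = \tr\left((2I-\cgt{X}X)\cgt{X}AX\right)$, which is $\nabla E_0(X) = 2AX(2I - \cgt{X}X) - 2X(\cgt{X}AX)$ up to Hermitian-symmetrization bookkeeping (the exact form requires care since $\cgt{X}X$ and $\cgt{X}AX$ do not commute in general). The tangent space of $\ob(n,p)$ at $X$ is $\{Z : \diag(\cgt{X}Z + \cgt{Z}X) = 0\}$, so the stationarity condition is that $\nabla E_0(X) = X\Delta$ for some real diagonal matrix $\Delta$ (the Lagrange multipliers for the $p$ normalization constraints). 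This gives the Euler--Lagrange equation
\begin{equation*}
    2AX(2I - \cgt{X}X) - 2X(\cgt{X}AX) = X\Delta.
\end{equation*}

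Next I would introduce the reduced SVD $X = U\Sigma V^*$ (grouping columns of $U$ by eigenvalue of $A$ will produce the block structure $X = (X_1,\dots,X_k)$ claimed in the theorem) and substitute into the Euler--Lagrange equation. The key algebraic step is to show that $U$ must diagonalize $A$: multiplying the stationarity equation on the left by a projector complementary to $\mathrm{range}(U)$, or examining off-diagonal blocks of $U^*AU$, should force $U^*AU$ diagonal, which simultaneously splits $X$ into the eigenspace blocks $X_i$. Within each block, $A$ acts as a scalar-like operator on $\mathrm{range}(U_i)$ (up to the diagonal $\tilde a_{ij}$), and the equation reduces to a relation among $\Sigma_i$, $V_i$, and the $\tilde a_{ij}$; solving this scalar/matrix equation yields the singular-value formula $\sigma_{ij} = \sqrt{1 + (p_i - r_i)/(\tilde a_{ij}\sum_\ell \tilde a_{i\ell}^{-1})}$ and, in the degenerate-eigenvalue case where $u_{ij}$ is not itself an eigenvector, the special value $\sigma_{ij}=\sqrt{2}$. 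The bound $\sigma_{ij}\ge 1$ follows from negative definiteness of $A$ (all $\tilde a_{ij} < 0$, so the added term is nonnegative). The constraint $\diag(X_i^* X_i) = \mathbf{1}$ translates into $\diag(V_i\Sigma_i^2 V_i^*) = \mathbf{1}$, a feasibility condition on $V_i$ given $\Sigma_i$; I would verify existence of such a partial-isometry $V_i$ by a majorization/Schur–Horn argument (the diagonal of $V_i\Sigma_i^2V_i^*$ can realize $\mathbf{1}$ precisely when $\mathbf{1}$ is majorized by the spectrum, which the $\sigma_{ij}\ge1$ bound together with a trace count should guarantee). The "at most one full-rank block" claim I expect to come from a counting/contradiction argument: two full-rank blocks would overdetermine the multiplier structure or violate the singular value formula with $p_i = r_i$ unless consistency fails except for a single block. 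The permutation matrix $P$ enters because the labeling of columns is immaterial — any column permutation of a stationary $X$ is stationary — so the stationary set is the orbit of these canonical forms under column permutations.

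Finally, for the converse direction I would take any $X$ of the stated form, substitute directly into the Euler--Lagrange equation, and check it holds with an appropriate real diagonal $\Delta$; this is a routine verification using $U^*AU$ diagonal and the explicit $\sigma_{ij}$.

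\textbf{Main obstacle.} The hard part will be the non-commutativity: $\cgt{X}X$, $\cgt{X}AX$, and $A$ do not mutually commute, so the Euler--Lagrange equation cannot be diagonalized in one stroke, and one must carefully exploit the oblique (rather than Stiefel) constraint — the multiplier is only \emph{diagonal}, not a full symmetric matrix — to pin down the block-diagonal structure of $U^*AU$ and the precise singular-value values. Handling the rank-deficient blocks (where $V_i$ is a genuine partial isometry, not square unitary) and proving the Schur–Horn-type existence of $V_i$ cleanly is the other delicate point; I would isolate it as a separate lemma.
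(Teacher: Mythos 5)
Your overall skeleton matches the paper's: treat the $p$ normalization constraints with a \emph{diagonal} Lagrange multiplier, write the first-order condition $2AX - AXX^*X - XX^*AX + XD = 0$, substitute an SVD, read off diagonal/off-diagonal relations, invoke a majorization/Schur--Horn argument for the existence of $V_i$, and verify the converse by direct substitution. However, there is a genuine gap at the very first structural step: you propose to obtain the block structure $X=(X_1,\dots,X_k)$ by ``grouping columns of $U$ by eigenvalue of $A$,'' and you then assert that within each block $A$ acts as a scalar-like operator on $\mathrm{range}(U_i)$. Neither is correct. The blocks in the theorem are the level sets of the diagonal multiplier $D$ (columns sharing the same $d_i$), not eigenspaces of $A$: within one block the diagonal entries $\ta_{ij}=u_{ij}^*Au_{ij}$ are generally distinct (this is exactly why the singular-value formula couples all $\ta_{i\ell}$ through $\sum_\ell 1/\ta_{i\ell}$ and uses the per-block trace $\sum_j\sigma_{ij}^2=p_i$), and a block may even contain columns $u_{ij}$ that are not eigenvectors of $A$ at all (the $\sigma_{ij}=\sqrt{2}$ case, for which the paper gives an explicit $7\times 5$ example). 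So an eigenvalue-based grouping is ill-defined at precisely the stationary points that make this theorem subtle, and it cannot produce the stated $\sigma_{ij}$ formula.

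Concretely, the missing chain is: exploit permutation invariance to sort $D$ into distinct values $d_1<\cdots<d_k$ with multiplicities $p_i$; left-multiply the stationarity equation by $X^*$ to see that $X^*XD$ is Hermitian, hence $X^*XD=DX^*X$, which forces $X_i^*X_j=0$ (and so $U_i^*U_j=0$) across multiplier blocks; then the off-diagonal block relations $\tA_{ij}\Sigma_j(2I-\Sigma_j^2)=\Sigma_i^2\tA_{ij}\Sigma_j$, combined with $\sigma_{ij}\ge 1$ and the fact that at most one block can have unit singular values, give $U_i^*AU_j=0$, while the within-block off-diagonal comparison gives diagonality of $U_i^*AU_i$ for rank-deficient blocks (the full-rank block needs the separate invariant-subspace argument $AX_i=X_iX_i^*AX_i$). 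The per-block diagonal comparison $2\ta_{ij}(1-\sigma_{ij}^2)=-d_i$ together with $\sum_j\sigma_{ij}^2=p_i$ then yields $d_i=2(p_i-r_i)/\sum_j\ta_{ij}^{-1}$, from which ``at most one full-rank block'' is immediate ($d_i=0$ iff full rank, and the $d_i$ are distinct) rather than the vague overdetermination argument you sketch. Finally, your ``routine'' converse also hides a real point: for a non-eigenvector column one has $Au_{ij}\neq\ta_{ij}u_{ij}$, and the verification only closes because the accompanying factor $2-\sigma_{ij}^2$ vanishes when $\sigma_{ij}=\sqrt 2$. Your Schur--Horn step for $V_i$ is fine and is exactly what the paper does.
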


The proof of \cref{thm:stationary_qomm} is given in \cref{sec:proof_qomm}.

\begin{remark}
    When $X_i$ is full rank, all the columns of $U_i$ are eigenvectors
    of matrix $A$. Otherwise $U_i$ may not be eigenvectors. This is very
    different from OMM without constraint. Here we give a concrete
    example for illustration. Suppose $A$ is a $7$-by-$7$ negative
    definite Hermitian matrix with eigendecomposition $A=Q\Lambda Q^*$
    where
    \begin{equation*}
        Q=\left(q_1, \cdots, q_7\right), \quad \Lambda=\diag(-1, -2, -3, -4, -5, -6, -7).
    \end{equation*}
    Now we construct a stationary point of size $7$-by-$5$ with rank
    $2$ from its reduced SVD,
    \begin{equation*}
        X = U\Sigma V^* := \left(q_2, u_2\right) \begin{pmatrix}
            \sqrt{3} & \\  & \sqrt{2}
        \end{pmatrix} V^*,
        \quad u_2=\sqrt{\frac{2}{5}} q_1 + \sqrt{\frac{3}{5}} q_6,
    \end{equation*}
    where $V\in \bbC^{2\times 5}$ is partial unitary such that
    $\diag(X^*X)=\mathbf{1}$. Let $D=-8I$, substitute $X$ into the zero
    gradient condition \eqref{eq:Lagrange_qomm_zero_grad}, one could
    verify that
    \begin{equation*}
        2AX - AXX^*X - XX^*AX + XD = 0,
    \end{equation*}
    which shows that such $X$ is a stationary point but the second
    column of $U$ is not an eigenvector of $A$.
\end{remark}

\begin{theorem} \label{thm:local_min_qomm}
    Given a negative definite Hermitian matrix $A$, the local minimizers
    of qOMM \eqref{eq:qomm} take the form of $X=Q_p V^*$, where $Q_p$
    are eigenvectors of $A$ corresponding to the $p$ smallest
    eigenvalues, $V \in \bbC^{p \times p}$ is an arbitrary unitary
    matrix. Conversely, any matrix in the form of $Q_p V^*$ is a local
    minimizer of qOMM. Furthermore, any local minimum of qOMM is also a
    global minimum.
\end{theorem}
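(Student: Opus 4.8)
The plan is to start from the full classification of stationary points in Theorem~\ref{thm:stationary_qomm} and to test, among those candidates, which ones are actual local minimizers by a second-order analysis. Since permutation on the right is an isometry of $\ob(n,p)$ that leaves $E_0$ invariant, it suffices to work with representatives of the form $X=(X_1,\dots,X_k)$ in the block form of Theorem~\ref{thm:stationary_qomm}; a stationary point is a local minimizer iff each of its permuted copies is, so the permutation $P$ can be dropped throughout and reinstated at the end. The first goal is to show that at a local minimizer there can be no rank-deficient block and no block of size $p_i>1$, i.e. $k=p$, every $p_i=1$, and every column $u_{ij}$ is a genuine eigenvector of $A$ (ruling out the $\sigma=\sqrt2$ non-eigenvector columns). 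The second goal is to show the $p$ eigenvectors involved must be exactly those for the $p$ smallest eigenvalues; then $\Sigma=I$ forces $X=Q_p V^*$ with $V$ unitary, and conversely every such $X$ attains the value $\operatorname{tr}(\Lambda_p)=\sum_{i=1}^p\lambda_i$, which is the global minimum of $E_0$ over $\ob(n,p)$ (this lower bound itself needs a short argument: on $\ob(n,p)$ one has $E_0(X)=\operatorname{tr}((2I-X^*X)X^*AX)$, and since $A\prec0$, completing the square as $A-(I-X^*X)\text{-terms}$, or diagonalizing $A=Q\Lambda Q^*$ and bounding columnwise, gives $E_0(X)\ge\sum_{i=1}^p\lambda_i$).

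Concretely I would proceed as follows. First, I would establish the global lower bound $E_0(X)\ge\sum_{i=1}^{p}\lambda_i$ for all $X\in\ob(n,p)$ and note equality holds at $Q_pV^*$; writing $Y=QX$ so that $E_0=\operatorname{tr}((2I-Y^*Y)Y^*\Lambda Y)$ with $\operatorname{diag}(Y^*Y)=\mathbf1$, one expands columnwise and uses $-\lambda_i\ge-\lambda_p>0$ together with $2t-t^2\le1$ for the relevant scalar combinations — this is the routine calculation I will not grind through here. Second, for the converse direction, verify directly that any $X=Q_pV^*$ satisfies the stationarity equation \eqref{eq:Lagrange_qomm_zero_grad} (it is the $k=1$, full-rank case of Theorem~\ref{thm:stationary_qomm} with $U=Q_p$), hence is at least stationary, and since it attains the global lower bound it is a global and a fortiori local minimizer. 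Third — the crux — take an arbitrary stationary point $X$ that is \emph{not} of this form and exhibit an explicit feasible curve $X(t)\subset\ob(n,p)$ with $X(0)=X$ along which $E_0$ strictly decreases, i.e. $\frac{d}{dt}\big|_0E_0(X(t))=0$ (automatic by stationarity) and $\frac{d^2}{dt^2}\big|_0E_0(X(t))<0$, proving $X$ is not a local minimizer. The descent directions come in three flavors matching the three ways a stationary point can fail to be $Q_pV^*$: (i) if some column $u_{ij}$ is not an eigenvector of $A$ (so $\sigma_{ij}=\sqrt2$), rotate $u_{ij}$ within the eigenspace-spanned plane witnessing the non-eigenvector condition; (ii) if a block has $p_i\ge2$ with equal singular values forced by condition~2, rotate within that block's right-singular-vector space while simultaneously adjusting $U_i$; (iii) if the selected eigenvectors are not the bottom $p$, swap a used eigenvector $q_{j}$ for an unused $q_{j'}$ with $\lambda_{j'}<\lambda_j$ via a Givens-type rotation in the $(q_j,q_{j'})$ plane, keeping the oblique constraint by a compensating reparametrization of $V$.

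The main obstacle will be item (iii) combined with the rank-deficiency bookkeeping: because the oblique constraint only pins down $\operatorname{diag}(X^*X)=\mathbf1$ and not $X^*X=I$, the perturbation that exchanges eigenvectors must be accompanied by a correction in the $V_i$ factors to stay on $\ob(n,p)$, and one has to check the second-order term of $E_0$ along this corrected curve is genuinely negative rather than zero — i.e. that the ``flat'' directions coming from the $V$-freedom (which generate the global minimizer orbit $Q_pV^*$ and are the only true flat directions) are correctly separated from the descent directions. I expect the cleanest route is to reduce to a low-dimensional sub-problem: restrict $A$ to the span of the eigenvectors appearing in $X$ plus one strictly-lower unused eigenvector, so that the analysis becomes a calculation on a fixed small oblique manifold where one can write the Hessian quadratic form explicitly and read off its negative eigenvalue. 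Once every non-$Q_pV^*$ stationary point is eliminated, Theorem~\ref{thm:stationary_qomm} guarantees the local minimizers are \emph{exactly} $\{Q_pV^*\}$, all sharing the common value $\sum_{i=1}^p\lambda_i$, which is the global minimum; reinstating the right-permutation $P$ changes nothing since $Q_pV^*P=Q_p(V^*P)$ with $V^*P$ still unitary. This proves every local minimum is a global minimum.
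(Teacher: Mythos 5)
Your overall skeleton (classify stationary points via \cref{thm:stationary_qomm}, eliminate the non-optimal ones by explicit descent, and verify that $Q_pV^*$ attains the optimal value) matches the paper, and your idea of proving a global lower bound $E_0(X)\ge\tr(\Lambda_p)$ to handle the converse is a legitimate alternative to the paper's compactness argument (though your ``columnwise'' sketch does not quite work as stated, since the quartic term couples columns; the clean route is through the SVD, $E_0=\sum_j(2\sigma_j^2-\sigma_j^4)(U^*AU)_{jj}$, the scalar bound $2t-t^2\le 1$, and a Ky Fan--type trace inequality). Your eigenvector-swap perturbation in case (iii) is essentially the paper's construction and is fine. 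The genuine gap is in the elimination of rank-deficient stationary points, which is the heart of the paper's proof. First, your taxonomy does not cover them: a block $X_i$ with $r_i<p_i$ whose columns of $U_i$ \emph{are} eigenvectors and whose nonzero singular values are given by the formula in \cref{thm:stationary_qomm} (e.g.\ singular values $\sqrt{2}$ and $0$ for $p_i=2$, $r_i=1$) fits neither your flavor (i) nor (ii). Second, flavor (i) as described produces no descent at all: since only the diagonal of $U^*AU$ enters through $E_0=\sum_j(2-\sigma_j^2)\sigma_j^2\,\ta_{jj}$ and the coefficient $(2-\sigma_j^2)\sigma_j^2$ vanishes exactly when $\sigma_j=\sqrt{2}$, rotating a non-eigenvector column $u_{ij}$ while keeping $\Sigma$ and $V$ fixed leaves $E_0$ unchanged (and rotating the right singular vectors, as in your flavor (ii), leaves $E_0$ invariant as well). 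Any actual descent at a rank-deficient stationary point must move the singular values, e.g.\ $\tSigma^2=(1-\veps)\Sigma^2+\veps I$ as in the paper, and this immediately breaks the constraint $\diag(X^*X)=\mathbf{1}$.

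That is precisely the technical obstacle you name but do not resolve: the paper resolves it with \cref{lem:majorization_cond} plus strong Schur--Horn continuity (\cref{lem:oblique_perturb}), which supplies a unitary correction $\tV$ with $\fnorm{\tV-V}=O(\veps^{1/2})$ so that $\tX=U\tSigma\tV^*\in\ob(n,p)$, giving feasible points at distance $O(\veps^{1/2})$ with an $O(\veps)$ decrease in $E_0$. Note also that this family is \emph{not} a smooth curve on $\ob(n,p)$ (the $V$-correction scales like $\veps^{1/2}$ while the $\Sigma$-change scales like $\veps$), so your plan of exhibiting a curve with vanishing first derivative and strictly negative second derivative cannot simply be read off from such a construction; you would have to produce a genuinely different descent direction mixing $U$- and $\Sigma$-variations (this does exist in examples, e.g.\ rotating a linearly dependent column out of the column space, but your proposal never constructs it), or else import the Schur--Horn continuity machinery. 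As written, the elimination of rank-deficient stationary points -- the step the paper devotes its two auxiliary lemmas to -- is missing, so the proof is incomplete at its crux even though the remaining steps (full-rank case, converse, and the local-equals-global conclusion) are sound.
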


The proof of \cref{thm:local_min_qomm} is given in \cref{sec:proof_qomm}.

\begin{remark}
    Let matrix $A$ have eigenvalues ordered non-decreasingly as
    $\lambda_1 \leq \lambda_2 \leq \cdots \leq \lambda_n$. The
    eigenvectors corresponding to the smallest $p$ eigenvalues of $A$
    may not be unique if $\lambda_p = \lambda_{p+1}$, leading to
    degeneracy. In this paper, when we refer to the ``smallest $p$
    eigenvalues", we include any valid choice of eigenvectors in cases
    of degeneracy.
\end{remark}

From the landscape analysis above, we can observe that all the local
minima have the same objective function value and constitute the set of
global minima. The minimizers at these local minima consist of mutually
orthogonal vectors. This property leads to an algorithm design where
explicit orthogonal constraints are not required. Rather, orthogonality of
the minimizers can naturally emerge throughout the optimization process
without any orthogonalization steps, which allows for greater flexibility
in the choice of ansatz applied to each input state and the choice for
input states itself.

Given that the algorithm design and numerical experiments have been
thoroughly discussed in \cite{bierman2022quantum}, here we omit the
details of the algorithm of qOMM. Note that the algorithm design of qOMM
and qTPM are similar; one can also refer to \cref{sec:qtpm} for the framework.

\subsection{qTPM} \label{sec:qtpm}

It is well known that the invariant subspace associated with the $p$
algebraically smallest eigenvalues of matrix $A$ yields an optimal
solution to the trace minimization problem with orthogonality constraints
\begin{equation*}
    \min_{X\in \bbC^{n\times p}} \tr(\cgt{X}AX), \quad \text{s.t.} \quad \cgt{X}X=I.
\end{equation*}
Thus, we can introduce a penalty term using the Frobenius norm to
construct the corresponding unconstrained optimization problem:
\begin{equation} \label{eq:fro_trace_penalty}
    \min_{X\in \bbC^{n\times p}} f_{\mu}(X) :=
    \frac{1}{2} \tr(\cgt{X}AX) + \frac{\mu}{4}\fnorm{\cgt{X}X - I}^2,
\end{equation}
where the penalty parameter $\mu > 0$ takes suitable finite values. This
trace penalty minimization method (TPM) finds the optimal eigenspace and
excludes all full-rank non-optimal stationary points for an
appropriately chosen $\mu$ \cite{gao2024weighted,wen2016trace}. This
differs from general classical quadratic penalty methods, which require
the penalty parameter $\mu$ to go to infinity for the penalty model to
approach the original constrained model.

Now consider applying the TPM model to the quantum computing setting, which gives the
quantum trace penalty minimization method (qTPM), i.e.,
\begin{equation} \label{eq:qtpm_original}
    \min_{X\in \ob(n,p)} f_{\mu}(X) = \frac{1}{2} \tr(\cgt{X}AX) +
    \frac{\mu}{4}\fnorm{\cgt{X}X - I}^2.
\end{equation}
Note that for $X$ in oblique manifold $\mathcal{OB}(n,p)$, one has
\begin{equation*}
    \fnorm{\cgt{X}X - I}^2= \tr(\cgt{X}X\cgt{X}X) + \tr(I - 2\cgt{X}X) = \tr(\cgt{X}X\cgt{X}X) - p.
\end{equation*}
Thus, the optimization model \eqref{eq:qtpm_original} is equivalent to
\begin{equation} \label{eq:qtpm}
    \min_{X\in \ob(n,p)} g_{\mu}(X) := \frac{1}{2} \tr(\cgt{X}AX) +
    \frac{\mu}{4} \tr(\cgt{X}X\cgt{X}X).
\end{equation}
This formulation gives some flexibility and convenience for theoretical
analysis. Consequently, we also refer to it as the qTPM model.

\begin{remark}
    With $\mu >0$ the TPM model \eqref{eq:fro_trace_penalty} is equivalent
    to another useful method for large-scale eigenspace computation called
    the symmetric low-rank product (SLRP) model \cite{li2019coord,
        liu2015efficient} defined as
    \begin{equation*}
        \min_{X\in \bbC^{n\times p}} \frac{1}{2} \fnorm{X\cgt{X}- A}^2,
    \end{equation*}
    in the sense that any stationary point of one problem, after scaling
    and shifting, has a one-to-one correspondence with a stationary point
    of the other. This equivalence comes from the identity
    \begin{equation*}
        \frac{1}{2} \tr(\cgt{X}AX) + \frac{\mu}{4}\fnorm{\cgt{X}X - I}^2 \equiv
        \frac{\mu}{4}\fnorm{X\cgt{X} - (I - \frac{A}{\mu})}^2 + \text{constant}.
    \end{equation*}
    Since these two models are equivalent, we only discuss the qTPM,
    the generalization of SLRP model to the quantum computing setting can be carried
    out in the same manner.
\end{remark}

Now let us analyze the energy landscape of qTPM. While the local
minimizers of OMM and qOMM are the same, the explicit penalty term in
Frobenius norm leads to a different behavior for TPM and qTPM. Note that
local minimizers of TPM are scaled eigenvectors of matrix $A$, right
multiplying any unitary matrix \cite{wen2016trace}, which does not
satisfy the constraint in qTPM. Hence the local minimizers of qTPM are
different from that in TPM. Specifically, we have the following results
describing the landscape properties of qTPM.

\begin{theorem} \label{thm:stationary_qtpm}
    Given $\mu > |\lambda_{\min}(A)|$ and a negative definite Hermitian
    matrix $A$ \footnote{We use $\lambda_{\min}(\cdot)$ and
        $\lambda_{\max}(\cdot)$ to denote the minimum and maximum
        eigenvalues of a matrix respectively.}, stationary points of qTPM
    \eqref{eq:qtpm} take the form $XP$, where $P$ is an arbitrary permutation
    matrix and $X = \left( X_1, \cdots, X_k \right)$, where $X_i\in
        \bbC^{n\times p_i}$ with rank $r_i$ for $i=1, \ldots, k$. Denote the
    singular value decomposition of each $X_i$ as $X_i=U_i \Sigma_i
        V_i^*$, where $U_i\in\bbC^{n\times p_i}$, $\Sigma_i\in
        \bbR^{p_i\times p_i}, V_i \in \bbC^{p_i\times p_i}$. Let
    $$
        U=\left(U_1, \cdots, U_k \right), \quad
        \Sigma=\diag\left(\Sigma_1, \cdots, \Sigma_k\right), \quad
        V=\diag\left(V_1, \cdots, V_k\right),
    $$
    we have $X=U\Sigma V^*$, such $X$ satisfies the following
    conditions:
    \begin{enumerate}
        \item $U^*U=I$, each $U_i$ consists of eigenvectors of $A$ with
              eigenvalues $\lambda_{i1}, \ldots, \lambda_{ip_i}$ for $i=1,
                  \ldots, k$.
        \item For $i=1, \ldots, k$, $\Sigma_{i}=\diag\left(\sigma_{i1},
                  \cdots, \sigma_{ip_i}\right)$ with
              $$
                  \sigma_{ij} = \sqrt{\frac{p_i}{r_i} - \frac{1}{\mu} \left( \lambda_{ij} -\frac{1}{r_i} \sum_{l=1}^{r_i} \lambda_{il} \right)},
              $$
              for $j=1, \ldots, r_i$; $\sigma_{ij}=0$, $j=r_i+1, \ldots, p_i$.
        \item $V^*V=I$, each $V_i \in \bbC^{p_i\times p_i}$ is unitary
              such that $X_i\in \ob(n, p_i)$ for $i=1, \ldots, k$.
    \end{enumerate}
    Besides, any $X\in \bbC^{n\times p}$ satisfying the conditions above
    is a stationary point of qTPM \eqref{eq:qtpm}.
\end{theorem}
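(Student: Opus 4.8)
The plan is to derive the first‑order optimality condition on the oblique manifold, extract from it the structure of $\mathrm{range}(X)$ and of an SVD of $X$, read off the block decomposition, and then verify the converse together with a Schur--Horn existence statement for the $V_i$. First, the normal space of $\ob(n,p)$ at $X$ (for the real inner product $\langle Y,Z\rangle=\mathrm{Re}\,\tr(Y^*Z)$) is $\{XD: D\in\bbR^{p\times p}\ \text{diagonal}\}$, so $X$ is stationary for \eqref{eq:qtpm} iff the Euclidean gradient $\tfrac12 AX+\tfrac\mu2 XX^*X$ lies there, i.e.
\[
AX+\mu XX^*X = XD \qquad \text{for some real diagonal } D .
\]
Rewriting this as $AX=XM$ with $M:=D-\mu X^*X$ Hermitian immediately gives that $\mathrm{range}(X)$ is $A$‑invariant, since $A\,\mathrm{range}(X)=\mathrm{range}(AX)=\mathrm{range}(XM)\subseteq\mathrm{range}(X)$.

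Next I would work from a reduced SVD $X=U\Sigma_+V_+^*$ with $r=\rank X$, $U^*U=I_r$, $\Sigma_+\in\bbR^{r\times r}$ diagonal with positive entries, $V_+^*V_+=I_r$. $A$‑invariance of $\mathrm{range}(U)$ gives $AU=UB$ with $B:=U^*AU$ Hermitian; substituting the SVD into $AX=XM$, then right‑multiplying by $V_+$ and left‑multiplying by $U^*$, gives $B\Sigma_+=\Sigma_+(V_+^*MV_+)$, and since $V_+^*MV_+$ is Hermitian this forces $B\Sigma_+^2=\Sigma_+^2B$. On each group of columns sharing a singular value $\Sigma_+^2$ is scalar, so I may rotate $U$ within that group to diagonalize the corresponding block of $B$ while staying in SVD form; hence, without loss of generality, $U$ consists of eigenvectors of $A$ and $B=\Lambda_0$ is diagonal. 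Feeding $AU=U\Lambda_0$ back into $AX=XM$ and using that $U\Sigma_+$ has full column rank gives $V_+^*M=\Lambda_0 V_+^*$, so $V_+^*MV_+=\Lambda_0$ and $V_+^*MV_+^\perp=0$ for an orthonormal completion $[V_+,V_+^\perp]$ of $\bbC^p$; therefore $M=V_+\Lambda_0V_+^*+V_+^\perp M_{22}(V_+^\perp)^*$ with $M_{22}$ Hermitian, and $D=M+\mu X^*X=V_+\Phi V_+^*+V_+^\perp M_{22}(V_+^\perp)^*$ where $\Phi:=\Lambda_0+\mu\Sigma_+^2$ is diagonal. Right‑multiplying the diagonal $D$ by $V_+$ yields $DV_+=V_+\Phi$, so the columns of $V_+$ are eigenvectors of the diagonal matrix $D$ and each is supported on a single level set $\{\ell:d_\ell=\delta_i\}$ of $\diag(D)$. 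Grouping the $p$ columns of $X$ by the value of $d_\ell$ and permuting to make the groups contiguous (this is $P$) makes $V$ block‑diagonal, $V=\diag(V_1,\dots,V_k)$, with $U=(U_1,\dots,U_k)$ consisting of eigenvectors of $A$; no block vanishes since $\diag(X_i^*X_i)=\mathbf 1$, so $r_i\ge1$. Within block $i$, $DV_+=V_+\Phi$ and $\Phi=\Lambda_0+\mu\Sigma_+^2$ force $\lambda_{ij}+\mu\sigma_{ij}^2=\delta_i$ for $1\le j\le r_i$; summing over $j$ and using $\tr(X_i^*X_i)=\sum_{j\le r_i}\sigma_{ij}^2=p_i$ fixes $\delta_i=\mu p_i/r_i+\tfrac1{r_i}\sum_\ell\lambda_{i\ell}$, hence the stated formula $\sigma_{ij}^2=\tfrac{p_i}{r_i}-\tfrac1\mu\bigl(\lambda_{ij}-\tfrac1{r_i}\sum_\ell\lambda_{i\ell}\bigr)$. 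Enlarging $U_i,\Sigma_i,V_i$ from the active dimension $r_i$ up to $p_i$ (zeros in $\Sigma_i$, arbitrary eigenvectors in $U_i$, any unitary completion $V_i$) leaves $X_i$ unchanged and is purely notational.

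For the converse, given $X$ (or $XP$) of the stated form I would set $\delta_i:=\mu p_i/r_i+\tfrac1{r_i}\sum_\ell\lambda_{i\ell}$; the $\sigma_{ij}$ formula makes $\lambda_{ij}+\mu\sigma_{ij}^2=\delta_i$ independent of $j$, so with $D:=\diag(\delta_1 I_{p_1},\dots,\delta_k I_{p_k})$ a blockwise computation gives $AX+\mu XX^*X=U\Sigma(\Lambda+\mu\Sigma^2)V^*=U\Sigma DV^*=XD$, using $U^*U=I$, the entrywise identity $\Sigma(\Lambda+\mu\Sigma^2)=\Sigma D$ (zero singular values annihilate the inactive diagonal entries), and $[D,V]=0$ from block‑diagonality of $V$; thus $X$ is stationary. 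Finally, the existence of a unitary $V_i\in\bbC^{p_i\times p_i}$ with $\diag(V_i\Sigma_i^2V_i^*)=\mathbf 1$ is precisely the existence of a Hermitian positive semidefinite matrix with spectrum $(\sigma_{i1}^2,\dots,\sigma_{ir_i}^2,0,\dots,0)$ and all‑ones diagonal; since $\sum_{j\le r_i}\sigma_{ij}^2=p_i$, the flat diagonal is majorized by the spectrum and the Schur--Horn theorem provides $V_i$. Here $\mu>|\lambda_{\min}(A)|$ and $A$ negative definite enter only to guarantee $\sigma_{ij}^2>0$: $\lambda_{ij}-\tfrac1{r_i}\sum_\ell\lambda_{i\ell}\le\lambda_{\max}(A)-\lambda_{\min}(A)<-\lambda_{\min}(A)=|\lambda_{\min}(A)|<\mu$, so $\tfrac1\mu\bigl(\lambda_{ij}-\tfrac1{r_i}\sum_\ell\lambda_{i\ell}\bigr)<1\le p_i/r_i$.

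The main obstacle I expect is the SVD bookkeeping in the forward direction: using $B\Sigma_+^2=\Sigma_+^2B$ to arrange that $U$ consists of eigenvectors of $A$ while still having a genuine SVD of $X$, translating ``the columns of $V_+$ are eigenvectors of $D$'' into the block‑diagonal form of $V$ and the extracted permutation $P$, and ensuring that the rank‑deficient (zero‑singular‑value) directions and their padding never interfere. Once the block structure is in hand, the singular‑value formula, the verification of stationarity in the converse, and the Schur--Horn realizability step are all routine.
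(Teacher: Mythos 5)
Your proposal is correct and reaches all four claims of the theorem, but it organizes the argument differently from the paper. The paper starts from the same first-order condition ($AX+\mu XX^*X+XD=0$ with diagonal multiplier $D$, up to your sign convention), but its first move is to left-multiply by $X^*$, use Hermiticity of $X^*XD$ to get $(d_i-d_j)X_j^*X_i=0$, and hence decouple the problem into independent block equations $AX_i+\mu X_iX_i^*X_i+d_iX_i=0$, after which each block is analyzed through its own SVD (invariance of $\mathrm{range}(X_i)$, diagonal/off-diagonal comparison of $\tA_i\Sigma_i+\mu\Sigma_i^3=-d_i\Sigma_i$). You instead keep a single global reduced SVD $X=U\Sigma_+V_+^*$, observe from $AX=XM$ with $M=D-\mu X^*X$ that $\mathrm{range}(X)$ is $A$-invariant, use $B\Sigma_+^2=\Sigma_+^2B$ to rotate $U$ into eigenvectors of $A$, and then recover the block structure a posteriori from the fact that the columns of $V_+$ are eigenvectors of the diagonal $D$ and hence supported on its level sets; cross-block orthogonality of the $U_i$ then comes for free from the single SVD rather than from the paper's $X_i^*X_j=0$ argument. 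Both routes converge to the identical singular-value formula, the same Schur--Horn existence step for $V_i$, and essentially the same converse verification, so the difference is one of bookkeeping: the paper's per-block decoupling is closer to the Lagrangian structure, while your global-SVD-first route makes the $A$-invariance and the orthonormality of $U$ across blocks more transparent. Two small points to tighten: the padding eigenvectors you add to each $U_i$ for the zero singular values must be chosen orthonormal to all columns already in $U$ (possible since $A$ is Hermitian and $p\le n$), not merely ``arbitrary''; and the majorization you invoke for Schur--Horn needs the partial-sum inequalities, not just $\sum_j\sigma_{ij}^2=p_i$ --- they do hold because the nonzero $\sigma_{ij}^2$ have average $p_i/r_i\ge 1$, which is exactly what the paper's \cref{lem:majorization_cond} records.
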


\begin{theorem} \label{thm:local_min_qtpm}
    Local minimizers of qTPM \eqref{eq:qtpm} with $\mu >
        |\lambda_{\min}(A)|$ and negative definite Hermitian matrix $A$ take
    the form of
    \begin{equation} \label{eq:local_min_qtpm}
        X=Q_p (I - \frac{1}{\mu}(\Lambda_p -\bar{\Lambda}_p))^{1/2} V^*,
        \quad \bar{\Lambda}_p = \frac{1}{p} \tr(\Lambda_p) I,
    \end{equation}
    where $Q_p$ are eigenvectors of $A$ corresponding to the $p$ smallest
    eigenvalues $\Lambda_p$ (counting geometric multiplicity),
    $V \in \bbC^{p \times p}$ is unitary matrix such that
    $\diag(X^*X)=\mathbf{1}$. Conversely, any matrix in the form
    \eqref{eq:local_min_qtpm} is a local minimizer of qTPM. Furthermore,
    any local minimum of qTPM is also a global minimum.
\end{theorem}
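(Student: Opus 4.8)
The plan is to combine the stationary-point classification of \cref{thm:stationary_qtpm} with a second-order curvature analysis on the oblique manifold, plus a short global lower-bound argument that fixes the optimal value. Since a local minimizer is in particular a stationary point, \cref{thm:stationary_qtpm} already forces it to have the block form $X = U\Sigma V^{*}$ (the permutation factor may be absorbed, as it changes neither membership in $\ob(n,p)$ nor $g_{\mu}$), where the columns of $U$ are orthonormal eigenvectors of $A$, each block's nonzero singular values are pinned down by the stated formula, and $V$ is unitary with $X \in \ob(n,p)$. So it remains to decide which of these stationary points are local minima.

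First I would settle the optimal value and the converse inclusion. On $\ob(n,p)$ the identity from the remark after \eqref{eq:qtpm} gives
\[
  g_{\mu}(X) \;=\; f_{\mu}(X) + \tfrac{\mu p}{4} \;=\; \tfrac{\mu}{4}\,\fnorm{XX^{*} - M}^{2} + c, \qquad M := I - \tfrac{1}{\mu}A \succ 0,
\]
so minimizing $g_{\mu}$ over $\ob(n,p)$ amounts to minimizing $\fnorm{Y - M}^{2}$ over $\mathcal{S} := \{\,Y \succeq 0 : \rank Y \le p,\ \tr Y = p\,\}$, using that $\{\,XX^{*} : X \in \ob(n,p)\,\} = \mathcal{S}$ (the inclusion ``$\subseteq$'' is immediate; for ``$\supseteq$'' factor $Y = ZZ^{*}$ with $Z \in \bbC^{n\times p}$ and right-multiply by a unitary supplied by Schur--Horn, which exists because the length-$p$ all-ones vector is majorized by every nonnegative vector of sum $p$). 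By the Hoffman--Wielandt inequality the minimizer $Y$ is co-diagonalizable with $M$ with eigenvalues matched in decreasing order, hence supported on the top-$p$ eigenspace of $M$, i.e.\ on $\mathrm{range}(Q_p)$; optimizing the eigenvalues under $\tr Y = p$ with a single Lagrange multiplier yields $Y = Q_p D Q_p^{*}$ with $D = I - \tfrac{1}{\mu}(\Lambda_p - \bar\Lambda_p) \succ 0$, the positivity holding since $\lambda_p - \lambda_1 < |\lambda_{\min}(A)| < \mu$. Thus the global minimum of $g_{\mu}$ is attained exactly when $XX^{*} = Q_p D Q_p^{*}$, that is, exactly for $X = Q_p D^{1/2} V^{*}$ with $V$ unitary; every such matrix has the common value $g_{\mu}(X) = \tfrac12\tr(D\Lambda_p) + \tfrac{\mu}{4}\tr(D^{2})$, and a suitable unitary $V$ with $\diag(X^{*}X) = \mathbf{1}$ always exists (Schur--Horn again, since $\tr D = p$). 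This proves that every matrix of the form \eqref{eq:local_min_qtpm} is a global and hence a local minimizer, and that all of them have the same value.

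It then remains to eliminate the stationary points not of this form. By the above, these fall into three families: (a) the rank is deficient, $\sum_i r_i < p$; (b) the rank is $p$ but some active column of $U$ carries an eigenvalue strictly larger than an eigenvalue that is left out, so $U$ does not span $\mathrm{range}(Q_p)$; and (c) the rank is $p$ and $U$ spans $\mathrm{range}(Q_p)$, but two blocks have unequal average eigenvalue, so $XX^{*} \neq Q_p D Q_p^{*}$. In each family I would construct an explicit tangent direction $\xi \in T_X\ob(n,p) = \{\,\xi : \rep\diag(X^{*}\xi) = 0\,\}$ along which the Hessian of the Lagrangian at $X$ (with the stationary-point multiplier) is strictly negative, violating the second-order necessary condition for a local minimum: for (b), a norm-preserving rotation that swaps an active column of $U$ against an omitted eigenvector of smaller eigenvalue, whose quadratic form is a negative eigenvalue gap times a positive factor from the associated singular value; for (a), a direction that opens a deficient block along a fresh eigendirection, negativity relying on the quartic penalty $\tfrac{\mu}{4}\tr((X^{*}X)^{2})$ favoring a flatter spectrum together with $\mu > |\lambda_{\min}(A)|$; and for (c), a direction mixing two blocks of unequal average, which drives $g_{\mu}$ below the value found above. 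Once (a)--(c) are ruled out, the remaining stationary points satisfy $XX^{*} = Q_p D Q_p^{*}$, i.e.\ they are exactly the matrices \eqref{eq:local_min_qtpm}; combined with the previous paragraph this gives the stated characterization and, in particular, that every local minimum of qTPM equals its global minimum.

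The algebraic identity, the majorization bookkeeping, and evaluating $g_{\mu}$ at the candidate are routine. The real obstacle is the curvature computation in the third step: one must arrange the Hessian of the Lagrangian on $\ob(n,p)$ so that a negative-curvature tangent direction becomes visible in each configuration (a)--(c), and then verify that the proposed directions genuinely lie in $T_X\ob(n,p)$ --- the column-normalization constraint ties the perturbations of different columns together through the shared unitary $V$ --- and produce a \emph{strict} decrease rather than a merely vanishing second-order term. Degeneracy of $\lambda_p$ also needs care throughout, since then neither $Q_p$ nor the ``omitted smaller eigenvector'' in case (b) is unique, so the swap and mixing arguments must be phrased through eigenvalue multiplicities and invariant subspaces rather than individual eigenvectors.
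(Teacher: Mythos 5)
Your handling of the converse and of the common optimal value is a genuinely different route from the paper's, and it is essentially sound: on $\ob(n,p)$ one indeed has $g_{\mu}(X)=\frac{\mu}{4}\fnorm{XX^{*}-M}^{2}+\mathrm{const}$ with $M=I-A/\mu$ (using $\tr(X^{*}X)=p$), the identification $\{XX^{*}:X\in\ob(n,p)\}=\{Y\succeq 0:\rank Y\le p,\ \tr Y=p\}$ follows from Schur--Horn since $\mathbf{1}$ is majorized by any nonnegative spectrum of sum $p$, and Hoffman--Wielandt plus the trace constraint pins the minimizer to $Q_pDQ_p^{*}$ with $D=I-\frac{1}{\mu}(\Lambda_p-\bar\Lambda_p)\succ 0$. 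The paper obtains the converse more indirectly (eliminate all other stationary points, then invoke compactness of $\ob(n,p)$ and equality of values), so for that half your argument is arguably cleaner and also yields the ``local $=$ global'' statement directly.

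The gap is in the forward direction, which is exactly where the paper's technical work lies and which your proposal only announces. Family (b) is fine: the swap $u_s\mapsto\sqrt{1-\veps^2}\,u_s+\veps q_t$ leaves $X^{*}X$ unchanged, so feasibility is immediate and the decrease is $\tfrac12\veps^2(\lambda_t-\tlambda)\sigma_s^2$, as in the paper. But for families (a) and (c) the perturbation must change the singular values of $X$ while keeping $\diag(X^{*}X)=\mathbf{1}$, i.e.\ $\Sigma$ and $V$ have to move together; the paper resolves this with \cref{lem:oblique_perturb}, built on strong Schur--Horn continuity (\cref{prop:sSHcont_strict_major}), producing a feasible point at distance $O(\veps^{1/2})$ with an $O(\veps)$ decrease. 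Your substitute---a tangent direction on which the Hessian of the Lagrangian (with the multipliers $d_i$ of \cref{thm:stationary_qtpm}) is strictly negative---is viable in principle: for instance at the rank-deficient point $X=(q,q)$ the direction $\xi=(w,-w)$, $w\perp q$, gives second-order coefficient $w^{*}Aw-(2\mu+\lambda)<0$, and at $X=(q_1,q_2)$ with $\lambda_1<\lambda_2$ the direction $\xi=(\alpha q_2,\,-c\bar\alpha q_1)$ with $c>1$ close to $1$ gives $\tfrac12(\lambda_2-\lambda_1)(1-c^2)|\alpha|^2+\tfrac{\mu}{2}(1-c)^2|\alpha|^2<0$. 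None of these computations, nor their versions for general block structure (multiple blocks, arbitrary ranks, degenerate $\lambda_p$), appear in the proposal, and your stated source of negativity in case (a) (``the quartic penalty favoring a flatter spectrum'') is not where it actually comes from---in the examples above the quartic term contributes nonnegatively and the decrease is driven by the multiplier term and the negative definiteness of $A$. Until the curvature computations are carried out in general (or an equivalent of \cref{lem:oblique_perturb} is supplied), the elimination of non-optimal stationary points, and hence the characterization of local minimizers, is not established.
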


The proofs of \cref{thm:stationary_qtpm} and \cref{thm:local_min_qtpm}
are given in \cref{sec:proof_qtpm}.

From the landscape analysis above, we can also observe that all the local
minima have the same objective function value and constitute the set of
global minima. Although the local minimizers of qTPM have a more intricate
structure than qOMM, they nonetheless encode the target eigen-subspace
associated with the $p$ smallest eigenvalues of matrix $A$ (counting
for geometric multiplicity). Through a post-processing procedure such as
Rayleigh-Ritz projection, the final eigenpair approximations can be
extracted. Thus, qTPM successfully captures the pertinent
eigen-information at its minimizers, even if not in the simplest form
prior to post-processing.

We now present an algorithm framework for qTPM. In the quantum
computing setting of qTPM, each column of $X$ is represented by the ansatz
$\ket{\psi_i}=\hU_{\theta_i} \ket{\phi_i}$ for $i=1, \ldots, p$, where
$\ket{\phi_1}, \ldots, \ket{\phi_p}$ are initial states, $\hU_{\theta_1},
    \ldots, \hU_{\theta_p}$ are parameterized circuits with $\theta_1, \ldots,
    \theta_p$ being parameters. Thus, our objective function and optimization
problem \eqref{eq:qtpm} turn into
\begin{equation*}
    \min_{\theta_1, \cdots, \theta_p} \quad
    \frac{1}{2} \sum_{i=1}^p \bra{\phi_i} \hU^\dagger_{\theta_i} \hH \hU_{\theta_i} \ket{\phi_i} + \frac{\mu}{2}
    \sum_{i,j=1, i<j}^p \left|\bra{\phi_i} \hU^\dagger_{\theta_i} \hU_{\theta_j} \ket{\phi_j} \right|^2,
\end{equation*}
where $\hH$ is the Hamiltonian operator of the system. Given this
variational formulation, one can construct quantum circuits to evaluate
both inner products appearing in the objective function above, i.e.,
$\bra{\psi_i}\hH\ket{\psi_j}$ and $\langle \psi_i | \psi_j \rangle$
respectively. When the evaluation of the objective function is available,
we can adopt a gradient-free optimizer to minimize the objective function
with respect to parameters $\theta_1, \ldots, \theta_p$. After the
optimization process, we obtain a set of parameters such that
$\hU_{\theta_1}\ket{\phi_1}, \ldots, \hU_{\theta_p}\ket{\phi_p}$
approximate the target eigenspace $Q_pS$, where $Q_p\in \bbC^{n\times p}$ are
$p$ eigenvectors of $\hH$ associated with the smallest $p$ eigenvalues and
$S\in \bbC^{p\times p}$ is an invertible matrix. Thus we can
perform a Rayleigh-Ritz step to extract the desired eigenspace.
Specifically, we construct two matrices, $B$ and $C$, with their
$(i,j)$-th elements defined as follows:
\begin{equation*}
    B_{ij} = \bra{\phi_i} \hU^\dagger_{\theta_i} \hH \hU_{\theta_j} \ket{\phi_j}
    \quad \text{and} \quad
    C_{ij} = \bra{\phi_i} \hU^\dagger_{\theta_i} \hU_{\theta_j} \ket{\phi_j}.
\end{equation*}
We then solve a generalized eigenvalue problem with the matrix pencil $(B,C)$,
given by:
\begin{equation*}
    BR = CR\Lambda,
\end{equation*}
where $R$ represents the eigenvector matrix, and $\Lambda$ is a diagonal
matrix with the eigenvalues of $(B, C)$ as its diagonal entries.
Finally, we obtain the desired eigenvalues contained in $\Lambda$, along
with the corresponding eigenvectors $\sum_{j=1}^p
    \hU_{\theta_j}\ket{\phi_j}R_{ji}$ for $i=1, \ldots, p$.

\subsection{qL1M}

In addition to the Frobenius norm penalty, we could also consider other
types of penalty like the $l_1$ penalty, which gives the quantum $l_1$
minimization method (qL1M),
\begin{equation} \label{eq:ql1m}
    \min_{X\in \ob(n, p)} E_1(X) := \tr(\cgt{X}AX) + \mu_ 1 \sum_{i< j} \left| (\cgt{X}X)_{ij} \right|,
\end{equation}
with $\mu_1>0$ taking an appropriate value. Despite the nonsmooth nature
of the objective function introduced by the $l_1$ penalty term,
desirable properties can be established due to its well-known
``exactness" property \cite{nocedal1999numerical}. Specifically, given
the penalty parameter larger than a certain value, solving the
unconstrained problem with respect to $X$ yields the exact solution to
the original constrained problem. We now proceed to identify the local
minimizers for a detailed characterization of the landscape of qL1M.

\begin{theorem} \label{thm:local_min_ql1m}
    Local minimizers of qL1M \eqref{eq:ql1m} for Hermitian matrix $A$
    with $\mu_1 > 16p \|A\|_2$ and $p < n$ take the form $X=Q_p V^*$,
    where $V\in\bbC^{p\times p}$ is an arbitrary unitary matrix and
    $Q_p$ consists of eigenvectors of the $p$ smallest eigenvalues
    of $A$ (counting geometric multiplicity). Conversely, any matrix in
    the form of $Q_p V^*$ is a local minimizer of qL1M. Furthermore, any
    local minimum of qL1M is also a global minimum.
\end{theorem}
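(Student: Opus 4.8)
\smallskip
\noindent\textbf{Proof proposal.}
The plan is to prove \cref{thm:local_min_ql1m} from two complementary facts: an \emph{exactness} statement --- every local minimizer of $E_1$ on $\ob(n,p)$ must already have orthonormal columns, i.e.\ $\cgt{X}X=I$ --- and a \emph{global lower bound} $E_1(X)\ge\sum_{i=1}^{p}\lambda_i$ on all of $\ob(n,p)$, with the value $\sum_{i=1}^p\lambda_i$ attained at every $Q_pV^*$. Granting these: the exactness step puts any local minimizer on the Stiefel manifold $\{X\in\bbC^{n\times p}:\cgt{X}X=I\}$, where $E_1$ coincides with the Rayleigh trace $\tr(\cgt{X}AX)$, so it is in particular a local minimizer of $\tr(\cgt{X}AX)$ there and hence, by the classical characterization of Rayleigh-trace minimizers, equals $Q_pV^*$; the lower bound shows every $Q_pV^*$ is a global minimizer (hence local), giving the converse; and since all these points share the value $\sum_{i=1}^p\lambda_i$, every local minimum is global.

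For the lower bound I would split on the rank of $X$. Writing $\Delta=\cgt{X}X-I$, which has zero diagonal on $\ob(n,p)$, the $\ell_1$--$\ell_2$ inequality gives $\sum_{i<j}|(\cgt{X}X)_{ij}|\ge\tfrac{1}{\sqrt2}\fnorm{\Delta}$. If $\rank(X)<p$, then $\cgt{X}X$ has a zero eigenvalue, so $\fnorm{\Delta}\ge1$; together with $|\tr(\cgt{X}AX)|\le p\,\twonorm{A}$ and $\mu_1>16p\,\twonorm{A}$ this already forces $E_1(X)>\sum_{i=1}^p\lambda_i$. If $\rank(X)=p$, write the polar decomposition $X=Z(\cgt{X}X)^{1/2}$ with $\cgt{Z}Z=I$, so that $\tr(\cgt{X}AX)=\tr\!\big(\cgt{Z}AZ\,\cgt{X}X\big)=\tr(\cgt{Z}AZ)+\tr(\cgt{Z}AZ\,\Delta)$; combining $\fnorm{\cgt{Z}AZ}\le\sqrt{p}\,\twonorm{A}$, the Ky Fan trace inequality $\tr(\cgt{Z}AZ)\ge\sum_{i=1}^p\lambda_i$, and $\mu_1>\sqrt{2p}\,\twonorm{A}$ gives
\[
  E_1(X)\ \ge\ \tr(\cgt{Z}AZ)+\Big(\tfrac{\mu_1}{\sqrt2}-\sqrt{p}\,\twonorm{A}\Big)\fnorm{\Delta}\ \ge\ \sum_{i=1}^{p}\lambda_i .
\]
Since $E_1(Q_pV^*)=\tr(\cgt{Q_p}AQ_p)=\sum_{i=1}^p\lambda_i$ (the unitary $V$ cancels and the penalty vanishes), every $Q_pV^*$ is a global minimizer.

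The exactness step carries the real weight; I would again split on $\rank(X)$. Suppose $\rank(X)<p$: choose $c\ne0$ with $Xc=0$ and let $\ell$ maximize $|c_\ell|$. The $\ell$-th equation of $\cgt{X}Xc=0$ is $c_\ell=-\sum_{m\ne\ell}(x_\ell^{*}x_m)c_m$, so $|c_m|\le|c_\ell|$ forces $\sum_{m\ne\ell}|x_\ell^{*}x_m|\ge1$, and $x_\ell\in V:=\mathrm{span}\{x_m:m\ne\ell\}$ with $\dim V\le p-1<n$. Fixing a unit vector $w\in V^{\perp}$ (nonempty since $p<n$), with $w$ or $-w$ chosen so $\mathrm{Re}(x_\ell^{*}Aw)\le0$, and rotating only the $\ell$-th column, $x_\ell\mapsto\cos t\,x_\ell+\sin t\,w$: the new matrix stays in $\ob(n,p)$ (as $w\perp x_\ell$), every off-diagonal Gram entry in row/column $\ell$ is scaled by $\cos t$ while the rest are fixed, so a second-order expansion yields $E_1(X(t))-E_1(X)\le t^2\big(2\,\twonorm{A}-\tfrac{\mu_1}{2}\big)+O(t^4)<0$ --- contradicting local minimality. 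Suppose instead $\rank(X)=p$ but $\cgt{X}X\ne I$: consider the ``straightening path''
\[
  X(s)=X(\cgt{X}X)^{-1/2}\big((1-s)\cgt{X}X+sI\big)^{1/2},
\]
smooth near $s=0$, with $X(0)=X$ and $\cgt{X(s)}X(s)=(1-s)\cgt{X}X+sI$ (diagonal $\mathbf{1}$, positive definite for small $s$), hence $X(s)\in\ob(n,p)$. Along it the off-diagonal Gram entries are merely multiplied by $1-s$, so no absolute-value kink is crossed and the penalty equals $(1-s)\sum_{i<j}|(\cgt{X}X)_{ij}|$, while $\tr(\cgt{X(s)}AX(s))=\tr\!\big(\cgt{X}AX\,(\cgt{X}X)^{-1}((1-s)\cgt{X}X+sI)\big)$ is \emph{affine} in $s$; hence $E_1(X(s))$ is affine with slope $-\tr(\cgt{Z}AZ\,\Delta)-\mu_1\sum_{i<j}|(\cgt{X}X)_{ij}|$ (with $Z=X(\cgt{X}X)^{-1/2}$, $\Delta=\cgt{X}X-I$), which is strictly negative since $\mu_1\sum_{i<j}|(\cgt{X}X)_{ij}|\ge\tfrac{\mu_1}{\sqrt2}\fnorm{\Delta}>\sqrt{p}\,\twonorm{A}\,\fnorm{\Delta}\ge|\tr(\cgt{Z}AZ\,\Delta)|$. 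So $E_1$ strictly decreases along the path and $X$ is not a local minimizer. This establishes $\cgt{X}X=I$ at every local minimizer; such a point is in particular a local minimizer of $\tr(\cgt{X}AX)$ on the Stiefel manifold, which classically forces $X=Q_pV^*$, and with the lower bound the remaining claims follow.

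The main obstacle is the full-rank, non-orthogonal case of the exactness step: one needs a feasible descent direction even though the $\ell_1$ penalty is nonsmooth and a naive one-column Gram--Schmidt perturbation disturbs an entire row of the Gram matrix. The straightening path is the resolving device, because scaling all off-diagonal Gram entries by the common factor $1-s$ keeps the penalty linear (no kink crossed) while rendering the Rayleigh trace affine, reducing local minimality to a one-dimensional slope computation that $\mu_1>\sqrt{2p}\,\twonorm{A}$ defeats. A secondary point is to choose, in the rank-deficient case, the column index $\ell$ so that the penalty removed by the rotation is bounded below by an absolute constant ($1$ here), since $\mu_1$ is compared with $\twonorm{A}$ uniformly in $X$; the hypothesis $p<n$ enters precisely to guarantee $V^{\perp}\ne\{0\}$. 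The stated bound $\mu_1>16p\,\twonorm{A}$ is a single uniform choice dominating all the thresholds that appear.
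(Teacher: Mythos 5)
Your proposal is correct, but it reaches the conclusion by a genuinely different route than the paper. The paper proves the exactness step (``every local minimizer satisfies $\cgt{X}X=I$'') by splitting on the size of the largest off-diagonal Gram entry relative to the threshold $\veps_0=\tfrac{1}{4p}$: in both cases it perturbs a single column by mixing in a unit vector $d$ orthogonal to the relevant columns, where $d$ comes from a dedicated lemma (\cref{lem:descent_d_ql1m}, proved via a Gershgorin bound) guaranteeing $\cgt{d}x_0<-\tfrac12$ in the nearly-orthogonal regime, and it handles complex phases of $\cgt{x_1}x_2$ through two subcases (replacing $d$ by $-\mi d$). You instead split on $\rank(X)$: in the rank-deficient case you extract a column $\ell$ with $\sum_{m\neq\ell}|\cgt{x_\ell}x_m|\ge 1$ from a null vector of $\cgt{X}X$ and rotate it into the orthogonal complement of the other columns (a second-order descent), and in the full-rank case your straightening path $X(s)=X(\cgt{X}X)^{-1/2}((1-s)\cgt{X}X+sI)^{1/2}$ makes the penalty exactly linear and the trace term affine in $s$, so nonsmoothness never intervenes and local minimality is defeated by a one-line slope estimate $\tfrac{\mu_1}{\sqrt2}\fnorm{\cgt{X}X-I}>\sqrt{p}\twonorm{A}\fnorm{\cgt{X}X-I}$; this is arguably cleaner than the paper's case analysis and needs weaker thresholds on $\mu_1$ (all dominated by $16p\twonorm{A}$). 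For the converse and the local-equals-global claim, the paper argues via compactness of $\ob(n,p)$ (a global minimum exists and must be among the classified local minimizers), whereas you give an explicit global lower bound $E_1(X)\ge\tr(\Lambda_p)$ via polar decomposition, Ky Fan, and the $\ell_1$--$\ell_2$ inequality, which directly certifies every $Q_pV^*$ as a global minimizer; both arguments, like the paper's, ultimately invoke the classical characterization of local minimizers of $\tr(\cgt{X}AX)$ on the Stiefel manifold for the final form $Q_pV^*$. One small inaccuracy: your closing remark that $p<n$ is needed to ensure $V^{\perp}\neq\{0\}$ is off, since $\dim V\le p-1<n$ already when $p\le n$; in fact your argument does not seem to use $p<n$ at all (it is the paper's lemma-based construction, requiring a direction orthogonal to all columns of a possibly full-rank $X$, that genuinely needs it), but this is an observation, not a gap.
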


The proof of \cref{thm:local_min_ql1m} is given in \cref{sec:proof_ql1m}.

Note that large-scale eigenvalue problems typically aim to find only
a few most important eigenpairs. Hence, the constraint $p < n$ in
\cref{thm:local_min_ql1m} is acceptable in practice.

The theorem above demonstrates that the local minimizers of the qL1M
exhibit the same elegant structure as qOMM. Similarly, orthogonality of
the minimizers can naturally emerge throughout the optimization process
without any orthogonalization steps. This favorable structure indicates
the prospects for deriving computationally efficient algorithms for qL1M
that leverage established $l_1$ regularization techniques.

\begin{remark}
    Consider the weighted form of qL1M, which is defined as
    \begin{equation} \label{eq:weight_ql1m}
        \min_{X\in\ob(n,p)} \tr(\cgt{X}AX W) + \mu_1 \sum_{i< j} |(\cgt{X}X)_{ij}|,
    \end{equation}
    where weighted matrix $W=\diag(w_1, \cdots, w_p)$ such that $w_1 >
        w_2 > \cdots > w_p > 0$. One can easily verify that the proof of
    \cref{thm:local_min_ql1m} can be applied to the weighted qL1M
    \eqref{eq:weight_ql1m} and we conclude that the local minimum of
    \eqref{eq:weight_ql1m} with $\mu_1 > 0$ sufficiently large must be
    the local minimum of
    \begin{equation*}
        \min_{ X\in\bbC^{n\times p}} \tr(\cgt{X}AXW), \quad \text{s.t.} \quad \cgt{X}X=I.
    \end{equation*}
    Therefore, we have local minimizer $X=Q_p$ for the weighted qL1M,
    where $Q_p$ are ordered eigenvectors corresponding to the $p$
    smallest eigenvalues of $A$ (counting geometric multiplicity).
\end{remark}

Now let us briefly discuss the algorithm design for the qL1M. In the
quantum computing setting of qL1M, our objective function and
optimization problem \eqref{eq:ql1m} turn into
\begin{equation*}
    \min_{\theta_1, \cdots, \theta_p} \quad
    \sum_{i=1}^p \bra{\phi_i} \hU^\dagger_{\theta_i} \hH \hU_{\theta_i} \ket{\phi_i} + \mu_1
    \sum_{i,j=1, i<j}^p \left| \bra{\phi_i} \hU^\dagger_{\theta_i} \hU_{\theta_j} \ket{\phi_j} \right|,
\end{equation*}
where $\hH$ is the Hamiltonian operator of the system. Starting from
this objective function and following the same steps as qTPM, the desired
eigenpairs can be obtained. Additionally, more powerful and efficient
optimization algorithms could be developed by utilizing advanced
techniques for handling the $l_1$ penalty, though this lies beyond the
scope of this paper. The nonsmooth nature of the problem presents
opportunities for novel algorithm designs tailored to the qL1M
formulation, offering an interesting direction for future research.

\section{Numerical Results} \label{sec:numerical}

In this section, we conduct a comparative analysis of the three VQE
models: qOMM, qTPM, and qL1M, to assess their practical performance. The
comparison is twofold: we first evaluate the quantum resource cost
associated with each model, and then we present numerical simulations to
investigate their optimization behavior and convergence. These
results provide empirical validation for our theoretical findings and
clarify the trade-offs between the different approaches to enforce
orthogonality.

\subsection{Quantum resource estimate} \label{sec:quantum_resource_cost}

We first analyze the efficiency of the three VQE models by comparing
their quantum resource costs. We begin with the quantum circuits used to
evaluate state inner products. For two states
$\ket{\psi}=\hU_{\psi}\ket{0}$ and $\ket{\phi}=\hU_{\phi}\ket{0}$, the
complex-valued inner product $\langle \psi | \phi \rangle$ could be
evaluated via the extended Hadamard test \cite{bierman2022quantum},
which computes the real and imaginary parts separately with two
circuits. In contrast, the real-valued fidelity $|\langle \psi | \phi
    \rangle|^2$ could be measured with only one circuit, for instance, the
inversion test (measuring overlap $\bra{0} U_\psi^\dagger U_\phi
    \ket{0}$) or the SWAP test. While other variants exist, for the small
qubit systems considered here, these circuits are broadly comparable in
depth and qubit count. We therefore focus our analysis on the number of
inner product test circuits in each model, which is a primary indicator
of both circuit execution time and overall quantum resource expenditure.

The evaluation of the objective function of our VQE models involves
two parts: the Hamiltonian part and the regularization part. Assume that
the Hamiltonian $\hH$ is decomposed into a linear combination of $N_U$
Hermitian and unitary operators (e.g., Pauli strings) as $\hH =
    \sum_{k=1}^{N_U} c_k \hU_k$. Thus, the Hamiltonian term can be evaluated
through a general inner product circuit by $\langle \psi | \hU_k | \phi
    \rangle = \langle 0 | \hU_{\psi} \hU_k \hU_{\phi} | 0 \rangle$ for $k=1,
    \ldots, N_U$.

Specifically, the qOMM objective function
\begin{equation*}
    f(\ket{\psi_1}, \cdots, \ket{\psi_p})
    = 2 \sum_{i=1}^p \bra{\psi_i} \hH \ket{\psi_i} -
    \sum_{i,j=1}^p \langle \psi_i | \psi_j \rangle \bra{\psi_j} \hH \ket{\psi_i},
    \quad \hH = \sum_{k=1}^{N_U} c_k \hU_k,
\end{equation*}
involves complex-valued terms $\langle \psi_i | \psi_j \rangle$ and
$\langle \psi_i | \hU_k | \psi_j \rangle$ for $i < j$, $i,j=1,\ldots,
    p$, and real-valued terms $\langle \psi_i | \hU_k | \psi_i \rangle$ for
$i = 1, \ldots, p$. Consequently, we employ the extended Hadamard test,
requiring $p^2 N_U$ inner product test circuits for the Hamiltonian part
and $p(p-1)$ for the regularization part. In contrast, the objective
function of qTPM and qL1M involves only real-valued terms $|\langle
    \psi_i | \psi_j \rangle|^2$ for $i < j$, $i,j=1,\ldots, p$, and $\langle
    \psi_i | \hU_k | \psi_i \rangle$ for $i=1, \ldots, p$. Each Hamiltonian
term is evaluated via a single Hadamard test and each regularization
term via an inversion test, reducing the circuit counts to $p N_U$ and
$\frac{1}{2}p(p-1)$, respectively.

\Cref{tab:cost_analysis} summarizes the constituent terms of each
objective function and the corresponding number of inner product test
circuits required for their evaluation. The comparison reveals
a clear trade-off: qOMM, which employs an implicit regularization,
demands the highest quantum resource cost in terms of inner product test
circuits. In contrast, qTPM and qL1M, utilizing explicit regularization,
achieve a significant reduction in inner product test circuit overhead.
This quantum resource efficiency, however, comes at the cost of
increased complexity in classical optimization and hyperparameter selection.
Both qTPM and qL1M introduce hyperparameters that require careful
tuning, and their optimization landscapes present distinct challenges:
qL1M due to its non-smooth objective function and qTPM due to its more
complex local minima structure. Consequently, the selection of an
appropriate model necessitates a careful balance between quantum
resource constraints and classical optimization difficulty, tailored to
the specific application and available hardware.

\begin{table}[htbp]
    \centering
    \renewcommand{\arraystretch}{1.5}
    \begin{tabular}{c cc cc}
        \toprule
        \multirow{2}{*}{\textbf{VQE model}} & \multicolumn{2}{c}{\textbf{Hamiltonian cost}}         & \multicolumn{2}{c}{\textbf{Regularization cost}}                         \\
        \cmidrule(lr){2-3} \cmidrule(lr){4-5}
                                            & Terms                                                 & \#(Test circ)                                    & Terms & \#(Test circ) \\
        \hline
        qOMM
                                            & $\langle \psi_i | \hU_k | \psi_j \rangle$, $i \leq j$
                                            & $p^2 N_U$
                                            & $\langle \psi_i | \psi_j \rangle$, $i < j$
                                            & $p(p-1)$                                                                                                                         \\
        \hline
        qTPM
                                            & $\langle \psi_i | \hU_k | \psi_i \rangle$
                                            & $p N_U$
                                            & $|\langle \psi_i | \psi_j \rangle |^2$, $i < j$
                                            & $\frac{1}{2} p(p-1)$                                                                                                             \\
        \hline
        qL1M
                                            & $\langle \psi_i | \hU_k | \psi_i \rangle$
                                            & $p N_U$
                                            & $|\langle \psi_i | \psi_j \rangle|$, $i < j$
                                            & $\frac{1}{2} p(p-1)$                                                                                                             \\
        \bottomrule
    \end{tabular}
    \caption{Terms involved in the objective function and the number of
        inner product test circuits required for each VQE model. Here,
        $N_U$ denotes the number of terms in the Hamiltonian decomposition,
        $k=1, \ldots, N_U$ and $i, j=1, \ldots, p$.}
    \label{tab:cost_analysis}
\end{table}

\subsection{Optimization behavior}

In this section, we present numerical results from noiseless,
state-vector simulations for the proposed VQE models. All simulations
were implemented using the Qiskit package \cite{javadi2024quantum}. By
employing ideal quantum circuits and calculating exact probability and
expectation values, we focus on the models' optimization landscapes and
exclude the effects of quantum hardware noise and measurement
sampling.

We apply our VQE models on the task of finding the $p=3$ lowest
eigenvalues of the electronic structure Hamiltonian within the Full
Configuration Interaction (FCI) framework. For the molecule
$\moleculeHtwo$ with a distance $0.735$ \AA ~between two hydrogen atoms,
we construct the FCI Hamiltonian using PySCF \cite{sun2018pyscf} and map
it to a qubit Hamiltonian via the Jordan-Wigner transformation. This
results in a qubit Hamiltonian expressed as a linear combination of
Pauli operators, which defines the Hermitian matrix $A$ for our
eigenvalue computations. All reference eigenvalues are obtained using
PySCF's FCI solver.

Our numerical experiments employ the following setup for all three VQE
models. We model the $\moleculeHtwo$ molecule in a minimal STO-3G basis,
yielding a 4-qubit FCI Hamiltonian. The unitary coupled-cluster singles
and doubles (UCCSD) ansatz is adopted for the ansatz circuit, a standard
choice for quantum chemistry applications \cite{tilly2022variational}.
The quantum states before applying the ansatz are chosen as the
Hartree-Fock state and relevant single-excitation states. The ansatz
parameters are initialized to zero, which sets the UCCSD circuit to the
identity operator, which naturally prepares the initial states as the
Hartree-Fock state and relevant single-excitation states.

For the classical optimization, we use the derivative-free optimizers
COBYLA and BOBYQA for comparison. The penalty parameters are set to
$\mu=1.0$ for qTPM and $\mu_1=1.0$ for qL1M, values sufficiently large
to ensure the effectiveness of the regularization terms. The
optimization starts with the trust region radius set to
\texttt{rhobeg=1e-1} and is terminated when the trust region radius
falls below a minimum value of \texttt{rhoend=1e-7} or after a maximum
of 600 iterations.

To quantify convergence accuracy, we compute two relative error metrics
against reference solutions. The reference objective function values
$f_{\text{ref}}$ are the theoretical values at the local minima
(summarized in \Cref{tab:local_minima_summary}), and the reference
eigenvalues $\lambda_{\text{ref}}$ are obtained from PySCF. We measure
the relative errors in the objective function and the $l_2$-norm of the
eigenvalues, respectively, as:
\begin{equation*}
    \frac{|f_{\text{iter}} - f_{\text{ref}}|}{|f_{\text{ref}}|}
    \quad \text{and} \quad
    \frac{\|\lambda_{\text{iter}} - \lambda_{\text{ref}}\|_2}{\|\lambda_{\text{ref}}\|_2}.
\end{equation*}

\Cref{fig:convergence} displays the convergence behavior of all three
models using COBYLA (left panel) and BOBYQA (right panel)
optimizers, showing the relative error of the objective function versus
iteration count. The corresponding accuracy of the final converged
eigenvalues is quantified in \Cref{tab:final_eigen_err} through their
relative error to reference values.

\begin{figure}[htbp]
    \centering
    \includegraphics[width=0.9\textwidth]{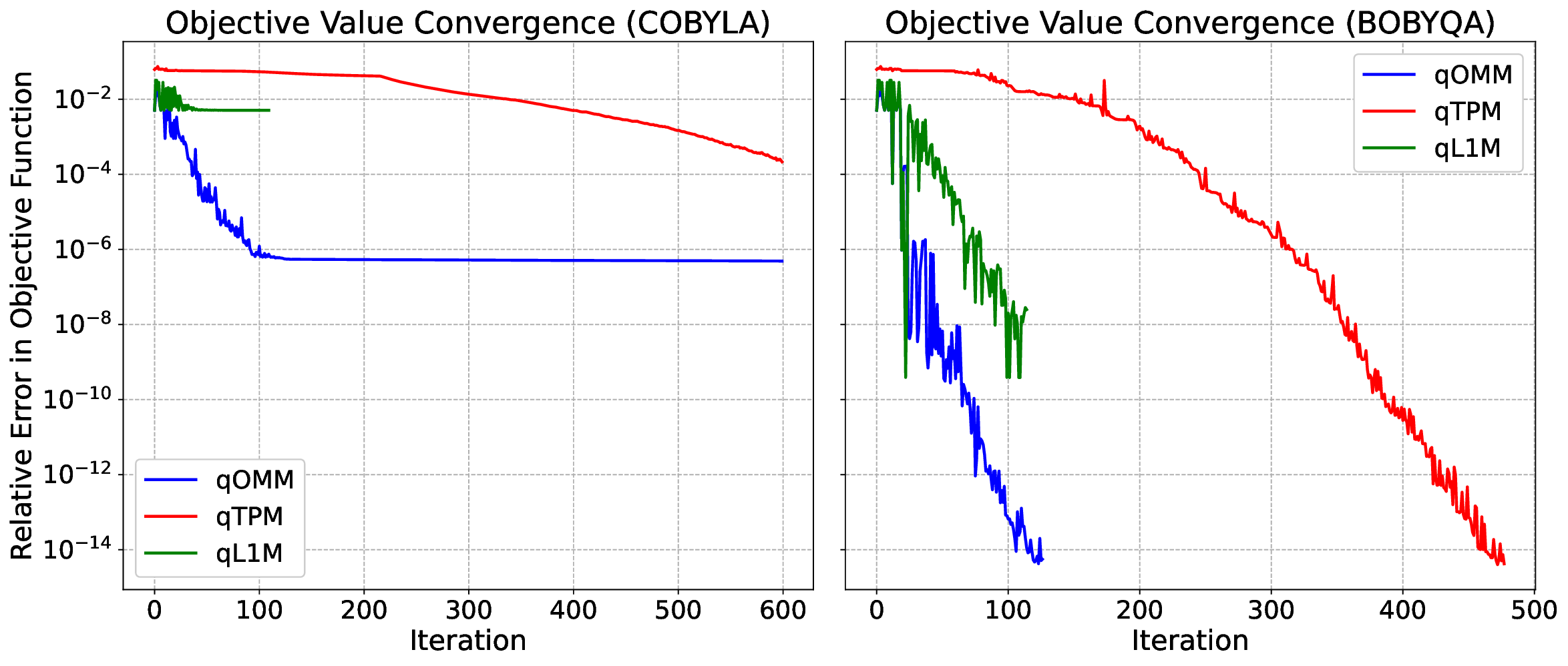}
    \caption{Relative error of the objective function versus
        optimization iteration of the three VQE models for the lowest three
        eigenvalues of the $\moleculeHtwo$ molecule with two derivative-free
        optimizers: COBYLA and BOBYQA. The stop criteria is set to a minimum
        trust region radius of \texttt{rhoend=1e-7} or a maximum of 600
        iterations.}
    \label{fig:convergence}
\end{figure}

\begin{table}[htbp]
    \centering
    \renewcommand{\arraystretch}{1.3}
    \begin{tabular}{c c c c}
        \toprule
        \textbf{VQE Model}    & \textbf{Eigenstate} & \textbf{COBYLA}        & \textbf{BOBYQA}        \\
        \midrule
        \multirow{3}{*}{qOMM} & Ground              & $1.83 \times 10^{-11}$ & $5.74 \times 10^{-15}$ \\
                              & 1st Excited         & $2.25 \times 10^{-10}$ & $8.03 \times 10^{-15}$ \\
                              & 2nd Excited         & $1.70 \times 10^{-8}$  & $1.13 \times 10^{-15}$ \\
        \midrule
        \multirow{3}{*}{qTPM} & Ground              & $2.47 \times 10^{-6}$  & $9.56 \times 10^{-16}$ \\
                              & 1st Excited         & $2.56 \times 10^{-6}$  & $9.63 \times 10^{-15}$ \\
                              & 2nd Excited         & $1.09 \times 10^{-4}$  & $1.38 \times 10^{-14}$ \\
        \midrule
        \multirow{3}{*}{qL1M} & Ground              & $1.09 \times 10^{-2}$  & $8.26 \times 10^{-10}$ \\
                              & 1st Excited         & $7.31 \times 10^{-15}$ & $4.46 \times 10^{-15}$ \\
                              & 2nd Excited         & $1.64 \times 10^{-15}$ & $8.80 \times 10^{-16}$ \\
        \bottomrule
    \end{tabular}
    \caption{The relative errors of the final computed eigenvalues
        for each model and optimizer, compared to the reference values
        from PySCF.}
    \label{tab:final_eigen_err}
\end{table}

\begin{figure}[htbp]
    \centering
    \includegraphics[width=0.9\textwidth]{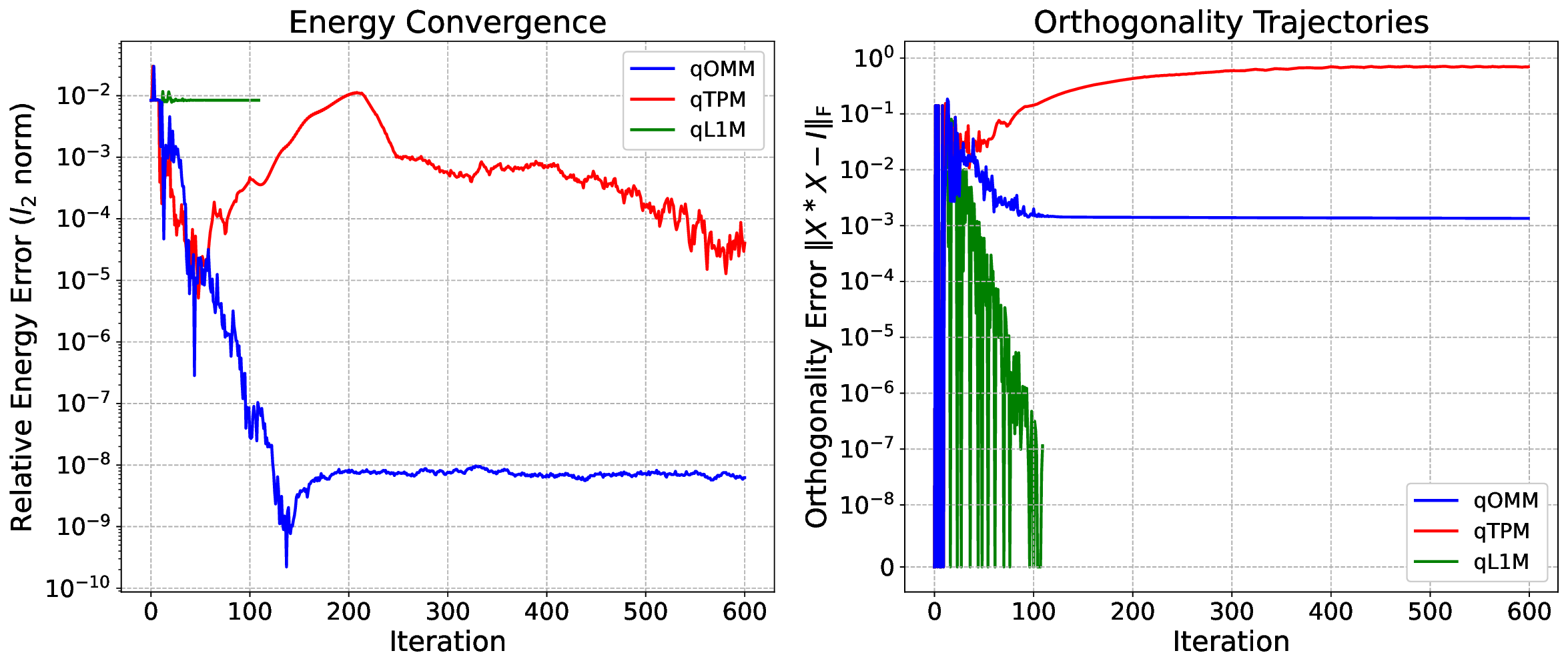}
    \caption{Convergence of relative errors of eigenvalues and
        trajectories of orthogonality constraint under COBYLA optimizer for
        three VQE models. (Left) The convergence behavior of relative error
        of eigenvalues in $l_2$ norm for all three models. (Right) The value
        of $\fnorm{X^*X - I}$ versus optimization iteration for three
        models.}
    \label{fig:energy_ortho_curves}
\end{figure}

For the accuracy of the final converged eigenvalues, all three models
achieve chemical accuracy (\texttt{1e-3} Hartree) compared to the
reference values, though qOMM and qTPM exhibit superior precision
compared to qL1M. As shown in \Cref{fig:convergence}, qL1M
is more sensitive to the choice of optimizer due to the non-smoothness
of its objective function. BOBYQA yields significantly better accuracy
in both the objective function value and computed eigenvalues than
COBYLA. This improvement may be attributed to BOBYQA's ability to
construct a quadratic model of the objective function, allowing for more
informed search directions even in the presence of non-smoothness.
Nevertheless, the inherent non-smoothness of qL1M remains a fundamental
challenge, making optimizer selection and parameter tuning more
demanding than for qOMM and qTPM when seeking comparable accuracy.

Furthermore, we note that the convergence speed varies among the three
models. Specifically, qOMM demonstrates the fastest convergence in terms
of number of iterations, while qTPM exhibits the slowest convergence
rate. This discrepancy stems from the different landscape structures of
the models. As previously discussed, qOMM's local minima directly align
with the orthogonal target eigen-subspace, together with a smooth
objective function, facilitating rapid convergence. In contrast, qTPM's
local minima are more complex, and it requires additional
refinement even after reaching the target subspace, which slows down the
overall convergence. qL1M, while also targeting the orthogonal subspace,
employs an $l_1$ penalty approach that may introduce additional
complexity in the optimization landscape, resulting in a moderate
convergence speed relative to the other two models.

To elucidate the connection between local minima structure and
optimization behavior, we examine the convergence of orthogonality error
and eigenvalue accuracy for all three VQE models using the COBYLA
optimizer (\Cref{fig:energy_ortho_curves}). The left panel displays the
corresponding eigenvalue errors obtained via Rayleigh-Ritz at each
iteration. Notably, qTPM exhibits a two-phase convergence: rapid
approach to the target eigen-subspace followed by prolonged refinement
toward its specific non-orthogonal minimum, requiring substantially more
iterations than needed for eigenvalue estimation alone. The right panel
tracks the evolution of $\fnorm{X^*X - I}$, revealing distinct
orthogonality enforcement patterns: qL1M rapidly converges to zero, qOMM
approaches orthogonality gradually, while qTPM converges to the
theoretically predicted non-zero value of $\fnorm{\Lambda_p -
    \bar{\Lambda}_p} / \mu$. The patterns of orthogonality trajectories
align with the convergence behaviors of the eigenvalues.

Theoretically, while all three models successfully identify the desired
low-lying eigen-subspace, their distinct local minima structures
(summarized in \Cref{tab:local_minima_summary}) lead to different
computational behaviors. The unitary solutions of qOMM and qL1M ($X^* X
    = I$) reduce the Rayleigh-Ritz procedure to a standard eigenvalue
problem, whereas qTPM's non-orthogonal solution ($X^*X = VSV^*$ with
diagonal $S$ and unitary $V$) necessitates a generalized eigenvalue
problem. Furthermore, qTPM's complex minima structure impacts its
convergence: even after rapidly identifying the target subspace from a
good initial point, the optimization continues to refine toward its
specific non-orthogonal minimum prescribed by the theory. This
additional adjustment, combined with the fact that optimization is
terminated based solely on the objective function's convergence or a
maximum iteration count, can result in a slower convergence rate for
qTPM compared to qOMM and qL1M.

\begin{table}[htbp]
    \centering
    \renewcommand{\arraystretch}{1.3}
    \begin{tabular}{c c c c}
        \toprule
        VQE model & Local minima form                                          & Objective value at local minima                                        \\
        \hline
        qOMM      & $X=Q_p V^*$                                                & $ \tr(\Lambda_p)$                                                      \\
        \hline
        qTPM      & $X=Q_p (I - (\Lambda_p - \bar{\Lambda}_p)/ \mu)^{1/2} V^*$ & $\tr(\Lambda_p) / 2  +  \tr(\bar{\Lambda}_p^2 - \Lambda_p^2) / (4\mu)$ \\
        \hline
        qL1M      & $X=Q_p V^*$                                                & $\tr(\Lambda_p)$                                                       \\
        \bottomrule
    \end{tabular}
    \caption{Summary of local minima for three VQE models. Here $Q_p$
        and $\Lambda_p$ are eigenpairs of matrix $A$ corresponding to the
        $p$ smallest eigenvalues, $\bar{\Lambda}_p =
            \frac{1}{p}\tr(\Lambda_p) I$, and $V\in\bbC^{p\times p}$ is
        unitary.}
    \label{tab:local_minima_summary}
\end{table}

Finally, we make a comparison for the entire quantum execution time and
hardware cost. The quantum resource analysis in
\cref{sec:quantum_resource_cost} indicates that qOMM requires
approximately $p$ times more quantum execution time per iteration than
qTPM and qL1M. However, \Cref{fig:convergence} reveals that for $p=3$,
qOMM requires only about (or less than) $1/p$ the number of iterations to
achieve a precision of \texttt{1e-14}. Consequently, the total quantum
execution time of qOMM is roughly comparable to (or even less than) that
of qTPM, though at the cost of greater quantum hardware resource
consumption. Meanwhile, qL1M achieves reduced quantum resource
requirements and execution time but demands more careful optimizer
selection and tuning to attain comparable accuracy, potentially
increasing classical computational overhead. These results underscore
that model selection involves a trade-off between quantum resource
efficiency and classical optimization complexity, which should be guided
by specific application requirements and hardware constraints.

\section{Conclusion} \label{sec:conclusion}

In this paper, we present the landscape analysis for three variational
quantum eigensolver (VQE) models designed for excited states
calculation: qOMM, qTPM, and qL1M. These models offer a solution to the
challenge of enforcing orthogonality between eigenstates, by embedding
the orthogonal constraint into the objective functions. These models
possess the desired property that any local minimum is also a global
minimum, which could be highly advantageous for optimization algorithm
design. Our analysis not only provides theoretical guarantees for these
favorable properties, but also offers valuable tools for landscape
analysis in various other VQE models.

Among the three models proposed, qOMM uses an implicit orthogonalization
approach by embedding the orthogonality constraint into the Hamiltonian
term, while qTPM and qL1M employ explicit regularization terms to
penalize non-orthogonality with different norms. The landscape analysis
reveals distinct characteristics for each model. For qOMM and qL1M, all
local minima correspond to orthogonal solutions, directly aligning with
the target eigen-subspace. In contrast, qTPM's local minima are
non-orthogonal but can be transformed into the desired eigen-subspace
through a generalized eigenvalue problem, which increases
the number of iterations. qL1M enforces orthogonality at its local
minima, similar to qOMM, but utilizes an $l_1$-norm based penalty, leading
to a non-smooth optimization landscape. This non-smoothness introduces
additional complexity in the optimization process, requiring specialized
algorithms to effectively navigate the landscape. The choice between
these models involves a trade-off between quantum resource demands and
classical optimization complexity, with qOMM being more measurement
intensive, while qTPM and qL1M offer reduced quantum costs at the
expense of hyperparameter tuning and more challenging optimization.

In addition, qOMM has been previously developed into a hybrid quantum-classical
algorithm with extensive numerical experiments demonstrating
favorable results. For qTPM and qL1M, the current work provides an
algorithmic framework with coarse-grained optimization strategies, while
leaving comprehensive numerical case studies for future work. We remark that with
techniques tailored to each model's particular structure, more efficient
and capable algorithms may be derived to enhance applicability in the
noisy intermediate-scale quantum era.

\appendix

\section{Proofs of qOMM} \label{sec:proof_qomm}

Let us first introduce the concept of strict majorization as follows.
Given a vector $x \in \bbR^n$, the notation $x^{\uparrow}$ denotes the
reordered vector of $x$ with entries in non-decreasing order, i.e.,
$x^{\uparrow}_1 \leq x^{\uparrow}_2 \leq \cdots \leq x^{\uparrow}_n$.

\begin{definition}[Strict Majorization]
    Let $d$ and $\lambda$ be two vectors in $\bbR^n$. The vector $\lambda$
    is strictly majorized by $d$, if
    \begin{equation} \label{eq:strict-major}
        \begin{split}
            \lambda^{\uparrow}_1                                     & < d^{\uparrow}_1, \\
            \lambda^{\uparrow}_1 + \lambda^{\uparrow}_2              &
            < d^{\uparrow}_1 + d^{\uparrow}_2,                                           \\
                                                                     & \,\,\,\vdots      \\
            \lambda^{\uparrow}_1 + \cdots + \lambda^{\uparrow}_{n-1} &
            < d^{\uparrow}_1 + \cdots + d^{\uparrow}_{n-1},                              \\
            \lambda^{\uparrow}_1 + \cdots + \lambda^{\uparrow}_{n-1}
            + \lambda^{\uparrow}_n                                   &
            = d^{\uparrow}_1 + \cdots + d^{\uparrow}_{n-1} + d^{\uparrow}_n.
        \end{split}
    \end{equation}
\end{definition}

\begin{lemma} \label{lem:majorization_cond}
    Assume vector $\lambda \in \bbR^{n}$ satisfies $\lambda \neq \mathbf{1}$
    and
    \begin{equation*}
        \sum_{i=1}^n \lambda_i = \sum_{i=1}^n 1 = n,
    \end{equation*}
    where $\mathbf{1}$ is the all-one vector in length $n$. Then $\lambda$
    is strictly majorized by $\mathbf{1}$.
\end{lemma}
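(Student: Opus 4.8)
The plan is to work directly with the ascending rearrangement $\lambda^{\uparrow}$ and check the defining chain \eqref{eq:strict-major} against $d = \mathbf{1}$, for which $d^{\uparrow} = \mathbf{1}$ and $d^{\uparrow}_1 + \cdots + d^{\uparrow}_k = k$. The terminal equality $\lambda^{\uparrow}_1 + \cdots + \lambda^{\uparrow}_n = n$ is simply the hypothesis $\sum_i \lambda_i = n$, so all the work is in the $n-1$ strict inequalities $S_k := \lambda^{\uparrow}_1 + \cdots + \lambda^{\uparrow}_k < k$ for $k = 1, \ldots, n-1$.

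First I would record two elementary consequences of the hypotheses: $\lambda^{\uparrow}_1 < 1$ and $\lambda^{\uparrow}_n > 1$. Indeed, if $\lambda^{\uparrow}_1 \geq 1$, then monotonicity forces $\lambda^{\uparrow}_i \geq 1$ for every $i$, and since the entries sum to $n$ this is possible only if every $\lambda^{\uparrow}_i = 1$, i.e. $\lambda = \mathbf{1}$, contradicting the assumption; the bound $\lambda^{\uparrow}_n > 1$ follows by the symmetric argument.

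Next I would fix $k \in \{1, \ldots, n-1\}$ and split according to the position at which the sorted sequence crosses the value $1$. If $\lambda^{\uparrow}_k \leq 1$, then $\lambda^{\uparrow}_i \leq \lambda^{\uparrow}_k \leq 1$ for $i = 2, \ldots, k$ while $\lambda^{\uparrow}_1 < 1$ strictly, hence $S_k \leq \lambda^{\uparrow}_1 + (k-1) < k$. If instead $\lambda^{\uparrow}_k > 1$, then $\lambda^{\uparrow}_i \geq \lambda^{\uparrow}_k > 1$ for $i = k+1, \ldots, n$ (a nonempty range, since $k \leq n-1$), so $\sum_{i=k+1}^n \lambda^{\uparrow}_i > n - k$, and rewriting $S_k = n - \sum_{i=k+1}^n \lambda^{\uparrow}_i$ gives $S_k < k$. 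In either case $S_k < k$, which together with the terminal equality is exactly the assertion that $\lambda$ is strictly majorized by $\mathbf{1}$.

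I do not expect a genuine obstacle here; the lemma is elementary. The only point needing care is uniform strictness of the inequalities, and the right bookkeeping is to observe that $\lambda \neq \mathbf{1}$ is precisely what yields $\lambda^{\uparrow}_1 < 1$ and $\lambda^{\uparrow}_n > 1$: the former controls the ``low'' partial sums and the latter the ``high'' ones, while the monotone ordering of $\lambda^{\uparrow}$ guarantees that every index $k \leq n-1$ falls into exactly one of the two regimes.
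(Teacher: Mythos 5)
Your proof is correct and follows essentially the same route as the paper's: sort the vector and compare partial sums with those of $\mathbf{1}$ according to whether the entries lie below or above $1$, with $\lambda \neq \mathbf{1}$ plus the sum condition supplying the strictness. The only cosmetic difference is that for indices $k$ with $\lambda^{\uparrow}_k > 1$ you argue directly via the complementary sum $S_k = n - \sum_{i=k+1}^{n}\lambda^{\uparrow}_i$, whereas the paper handles the upper block by contradiction with the total-sum equality; both are equally elementary and valid.
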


\begin{proof}
    Without loss of generality, we assume that $\lambda$ is in non-decreasing
    order, i.e., $\lambda_1 \leq \lambda_2 \leq \cdots \leq \lambda_n$. Then
    we can further split $\lambda$ into three parts by comparing its
    components with $1$, i.e.,
    \begin{equation*}
        \lambda_1 \leq \cdots \leq \lambda_i < 1 = \lambda_{i+1} = \cdots
        = \lambda_{j} < \lambda_{j+1} \leq \cdots \leq \lambda_n.
    \end{equation*}
    Denote $d_1=d_2=\cdots = d_n=1$. Since $\lambda\neq\mathbf{1}$, the
    summation condition $\sum_{i=1}^n \lambda_i=\sum_{i=1}^n d_i$ guarantees
    that the first and the third part are not empty. Then for the first part
    of $\lambda$ whose values are strictly less than $1$, we have
    \begin{equation} \label{eq:lambda-strict-majorization}
        \lambda_1 + \cdots + \lambda_k < d_1 + \cdots + d_k,
    \end{equation}
    for all $k = 1, \ldots, i$. For the second part, $\lambda_i$s are all one.
    Hence, equation \eqref{eq:lambda-strict-majorization} is satisfied for $k
        = i+1, \dots, j$. Regarding the third part, where $\lambda_i$s are
    strictly greater than $1$, if there exists an index $\ell \in \{j+1,
        \dots, n-1\} $ such that
    \begin{equation*}
        \lambda_1 + \cdots + \lambda_\ell \geq d_1 + \cdots + d_\ell,
    \end{equation*}
    then we must have $\lambda_1 + \cdots + \lambda_s > d_1 + \cdots + d_s$
    for all $s > \ell$, which contradicts the summation condition
    $\sum_{i=1}^n \lambda_i = \sum_{i=1}^n d_i$. Combining the discussion above, 
    we complete the proof of the lemma.
\end{proof}

\begin{proof}[Proof of \cref{thm:stationary_qomm}]
    Stationary points of \eqref{eq:qomm} are characterized by the first-order
    condition of the Lagrangian function. Denoting the Lagrange multiplier by
    a diagonal matrix $D = \diag(d_1, \dots, d_p)$, the Lagrangian function admits
    \begin{equation} \label{eq:Lagrange_qomm}
        L(X, D) = \tr \left( (2I - X^*X) X^*AX \right)
        + \tr(D^\top (X^*X - I)).
    \end{equation}
    The first-order condition of \eqref{eq:Lagrange_qomm} yields
    \begin{subnumcases}{ \label{eq:Lagrange_qomm_zero_grad}}
        2AX - AXX^*X - XX^*AX + XD = 0, \label{eq:Lagrange_qomm_zero_grad_X} \\
        X \in \ob(n,p). \label{eq:Lagrange_qomm_zero_grad_D}
    \end{subnumcases}
    For any permutation matrix $P$, we have that
    \eqref{eq:Lagrange_qomm_zero_grad} is equivalent to
    \begin{equation*}
        \begin{cases}
            2AXP - AXP (XP)^*XP - XP (XP)^*AXP + XP P^\top DP = 0, \\
            XP \in \ob(n,p),
        \end{cases}
    \end{equation*}
    where $P^\top D P$ is a diagonal permutation of $D$. Therefore, without
    loss of generality, we assume that $D$ has $k$ distinct diagonal values in
    non-decreasing ordering,
    \begin{equation*}
        D =
        \begin{pmatrix}
            d_1 I_{p_1} &        &             \\
                        & \ddots &             \\
                        &        & d_k I_{p_k}
        \end{pmatrix}
    \end{equation*}
    for $d_1 < d_2 < \cdots < d_k$ and $p_i$ denoting the degeneracy of $d_i$.
    The matrix $X$ is denoted by $k$ column blocks, $X=\left( X_1, \dots, X_k
        \right)$ following the same partition of $D$. The reduced singular value
    decomposition of each $X_i$ admits $X_i = U_i \Sigma_i V_i^*$, where $U_i
        \in \bbC^{n\times r_i}, \Sigma_i \in \bbR^{r_i \times r_i}, V_i \in
        \bbC^{r_i \times p_i}$ for $r_i \leq p_i$ being the rank of $X_i$. Putting
    these SVDs together, we obtain the reduced SVD of $X$,
    \begin{equation} \label{eq:qomm_x_svd}
        \begin{split}
            X &=
            \begin{pmatrix}
                U_1 \Sigma_1 V_1^* & \cdots & U_k\Sigma_k V_k^*
            \end{pmatrix} \\
            &=
            \begin{pmatrix}
                U_1 & \cdots & U_k
            \end{pmatrix}
            \begin{pmatrix}
                \Sigma_1 &  & \\ & \ddots & \\ & & \Sigma_k
            \end{pmatrix}
            \begin{pmatrix}
                V_1^* &  & \\ & \ddots & \\ & & V_k^*
            \end{pmatrix}
            = U \Sigma V^*,            
            \end{split}
    \end{equation}
    where the diagonal of $\Sigma$ is not necessarily in non-increasing
    order.

    Left multiplying $X^*$ on both sides of
    \eqref{eq:Lagrange_qomm_zero_grad_X}, we have
    \begin{equation*}
        2X^*AX - X^*AX X^*X - X^*X X^*AX = -X^*XD,
    \end{equation*}
    whose left-hand side is Hermitian. Hence, the right-hand side $X^*XD$ is
    also Hermitian, i.e., $X^*XD=DX^*X$. Substituting the block structure of
    $X$ into $X^*XD=DX^*X$, we obtain,
    \begin{equation*}
        (d_i - d_j) X_j^* X_i = 0, \quad i,j=1, \ldots, k.
    \end{equation*}
    Since $d_i \neq d_j$ for $i \neq j$, we have $X_j^* X_i = 0$ and hence
    $U_j^* U_i = 0$.

    We left multiply $U^*$, right multiply $V$, substitute the SVD of $X$ into
    \eqref{eq:Lagrange_qomm_zero_grad_X}, and reorganize the expression
    leading to
    \begin{equation} \label{eq:zero_grad_svd}
        \tA \Sigma(2I - \Sigma^2) = \Sigma^2 \tA \Sigma - \Sigma D,
    \end{equation}
    where we denote $\tA =  U^* A U$ and adopt the fact $V^* D V = D$.
    Recalling the block structure of $X$ and $U$, we equate the left- and right-hand sides
    of \eqref{eq:zero_grad_svd}, and obtain relations for diagonal and
    off-diagonal blocks,
    \begin{equation} \label{eq:diag_block_compare}
        \tA_{ii} \Sigma_i(2I - \Sigma_i^2) = \Sigma_i^2 \tA_{ii} \Sigma_i
        - d_i \Sigma_i, \quad i = 1, \ldots, k,
    \end{equation}
    for diagonal blocks, and
    \begin{equation} \label{eq:off_diag_block_compare}
        \tA_{ij} \Sigma_j(2I- \Sigma_j^2) = \Sigma_i^2 \tA_{ij} \Sigma_j,
        \quad i, j = 1, \ldots, k, \text{ and } i \neq j,
    \end{equation}
    for off-diagonal blocks, where $\tA_{ij} = U_i^* A U_j$.

    We first focus on the $i$-th diagonal block in
    \eqref{eq:diag_block_compare}. Denote the $j$-th diagonal entry of
    $\tA_{ii}$ and $\Sigma_i$ as $\ta_{ij}$ and $\sigma_{ij}$, respectively.
    Comparing the diagonal entries in \eqref{eq:diag_block_compare} leads to
    \begin{equation*}
        2 \ta_{ij} (1 - \sigma_{ij}^2) = -d_i, \quad 1 \leq j \leq r_i.
    \end{equation*}
    By the negativity of $A$, we have $\ta_{ij}<0$ for all $i,j$, and
    represent $\sigma_{ij}$ by $d_i$ and $\ta_{ij}$ as
    \begin{equation*}
        \sigma_{ij}^2 = 1 + \frac{d_i}{2\ta_{ij}}.
    \end{equation*}
    Summing $j$ from $1$ to $r_i$, we obtain,
    \begin{equation*}
        d_i = \frac{2(p_i - r_i)}{\sum_{j=1}^{r_i}\frac{1}{\ta_{ij}}},
    \end{equation*}
    where we adopt the fact that $\tr(X_i^*X_i) = \sum_{j=1}^{r_i}
        \sigma_{ij}^2 = p_i$ by the first-order condition
    \eqref{eq:Lagrange_qomm_zero_grad_D}. When $p_i = r_i$, i.e., $X_i$ is
    of full rank, we have $d_i = 0$. Since $d_i$ and $d_j$ are distinct, we
    know that there is at most one $X_i$ of full rank and hence at most one
    block such that $\sigma_{ij} = 1$. For all other strictly low-rank
    blocks, we have $\sigma_{ij} > 1$.

    Next, we focus on the $(i, j)$ off-diagonal block in
    \eqref{eq:off_diag_block_compare}. Comparing the $(k, \ell)$ elements of
    the $(i, j)$ block, we have
    \begin{equation} \label{eq:uAu_zero}
        u_{ik}^* A u_{j \ell} \cdot
        \sigma_{j \ell} (2 - \sigma_{j \ell}^2 - \sigma_{i k}^2) = 0
        \Longrightarrow
        u_{ik}^* A u_{j \ell} = 0
    \end{equation}
    for $k = 1, \ldots, r_i$ and $\ell = 1, \ldots, r_j$, where $u_{ik}, u_{j
        \ell}$ are the $k$-th, $\ell$-th column of $U_i$, $U_j$ respectively.
    Equation \eqref{eq:uAu_zero} implies the $A$-orthogonality of $U_i$
    and $X_i$, i.e.,
    \begin{equation*}
        U_{i}^* A U_{j} = 0 \text{ and } X_i^* A X_j = 0,
    \end{equation*}
    for all $i \neq j$.

    As we have shown that both $X_i^* X_j$ and $X_i^* A X_j$ are zero for $i
        \neq j$, all blocks in \eqref{eq:Lagrange_qomm_zero_grad_X} are
    decoupled, and satisfy
    \begin{equation} \label{eq:Lagrange_qomm_zero_grad_block_X}
        2 A X_i - A X_i X_i^* X_i - X_i X_i^* A X_i + d_i X_i = 0,
        \quad i = 1, \ldots, k.
    \end{equation}

    When $X_i$ is of full-rank, we have $d_i = 0$, $\sigma_{ij} = 1$ for $j
        = 1, \dots, p_i$, and $X_i$ is a partial unitary matrix. Equation
    \eqref{eq:Lagrange_qomm_zero_grad_block_X} simplifies to
    \begin{equation*}
        A X_i = X_i X_i^* A X_i,
    \end{equation*}
    which implies that the column space of $X_i$ is an invariant subspace of $A$. Therefore,
    $X_i$ is spanned by some eigenvectors of $A$ and such $X_i$ satisfies
    the oblique manifold constraint.

    Now we consider the case that $X_i$ is strictly rank deficient. Given the
    basis $U_i$, the diagonal values of $\tA_{ii}$ are fixed. Substituting the
    expression of $d_i$ back into that of $\sigma_{ij}^2$, we have
    \begin{equation*}
        \sigma_{ij} =
        \sqrt{1 + \frac{p_i - r_i}{\ta_{ij} \cdot
                \sum_{\ell=1}^{r_i}\frac{1}{\ta_{i \ell}}}} > 1.
    \end{equation*}
    To satisfy the oblique manifold constraint, i.e., $X_i \in \ob(n, p_i)$,
    we need to find a $V_i$ such that $\diag(X_i^* X_i) = \mathbf{1}$. To
    achieve this, introduce the standard singular value decomposition of
    $X_i$ from its reduced version above as
    \begin{equation*}
        X_i = U_i \Sigma_i V_i^* = \begin{pmatrix}
            U_i & U_{i\bot}
        \end{pmatrix} \begin{pmatrix}
            \Sigma_{i} & 0 \\ 0 & 0
        \end{pmatrix} \begin{pmatrix}
            V_i^* \\ V_{i\bot}^*
        \end{pmatrix} := \bU_i \bSigma_i \bV_i^*,
    \end{equation*}
    where $\bU_i \in \bbC^{n \times p_i}$, $\bV_i \in \bbC^{p_i\times p_i}$.
    Denote $\sigma_{ij}=0$ for $j=r_{i+1}, \ldots, p_i$, then
    $\sum_{j=1}^{p_i} \sigma_{ij}^2=p_i$. By \cref{lem:majorization_cond},
    we know that $\{ \sigma_{ij}^2 \}_{j=1}^{p_i}$ and $\mathbf{1}$ satisfy
    the majorization relations. Therefore, by Schur-Horn theorem, there
    exists $\bV_i$ unitary such that
    $\diag(\bV_i\bSigma_i^2\bV_i^*)=\mathbf{1}$, which gives the desired
    $V_i$ such that $X_i \in \ob(n, p_i)$.

    Next we analyze $U_i$ of rank-deficient $X_i=U_i\Sigma_i V_i^*$. Denote
    the $j$-th column of $U_i$ as $u_{ij}$. Comparing off-diagonal term of
    \eqref{eq:diag_block_compare} we have
    \begin{equation*}
        u_{ij_1}^* A u_{ij_2} (2 - \sigma_{i j_2}^2-\sigma_{i j_1}^2) = 0,
        \quad j_1, j_2=1, \ldots, r_i, ~j_1 \neq j_2.
    \end{equation*}
    Note that $p_i > r_i$ implies $d_i < 0$ and hence $\sigma_{ij} > 1$ for
    $1 \leq j \leq r_i$. Therefore, $u_{ij_1}^* A u_{ij_2}=0$ for $j_1,
        j_2=1, \ldots, r_i, j_1 \neq j_2$, which gives
    \begin{equation*} \label{eq:U_diag_A}
        U_{i}^* A U_{i} = \diag\left(\ta_{i1}, \cdots, \ta_{i r_i}\right).
    \end{equation*}
    Besides, substitute $X_i=U_i\Sigma_i V_i^*$ into
    \eqref{eq:Lagrange_qomm_zero_grad_block_X}, multiply $V_i\Sigma_i^{-1}$
    from the right at both sides, we obtain
    \begin{equation*}
        AU_i(2I - \Sigma_i^2) = U_i(\Sigma_i^2 U_i^*AU_i - d_i I).
    \end{equation*}
    Since both $U_i^* A U_i$ and $2I-\Sigma_i^2$ are diagonal, given
    $j=1,\ldots, r_i$, if $\sigma_{ij}^2\neq 2$, $u_{ij}$ must be an
    eigenvector of $A$; if $\sigma_{ij}^2=2$, $u_{ij}$ may not be the
    eigenvector of $A$.

    Finally, we verify that any $X$ satisfying the conditions in
    \cref{thm:stationary_qomm} is indeed a stationary point of qOMM. Given
    the combined reduced SVD form $X=U \Sigma V^*$ as in
    \eqref{eq:qomm_x_svd}, according to conditions in
    \cref{thm:stationary_qomm}, 
    \begin{equation*}
        U^*AU:=\tA=\diag(\ta_{11}, \cdots, \ta_{1r_1}, \cdots, \ta_{k1}, \cdots, \ta_{kr_k}).     
    \end{equation*}
    Let
    \begin{equation*}
        D = \diag(d_1I_{p_1}, \cdots, d_k I_{p_k}), \quad
        d_i = \frac{2(p_i-r_i)}{\sum_{j=1}^{r_i} \frac{1}{\ta_{ij}}}, ~i=1, \ldots, k.
    \end{equation*}
    Then we have $\sigma_{ij}^2 = 1+ \frac{d_i}{2\ta_{ij}}$ and $d_i =
        2\ta_{ij}(\sigma_{ij}^2 -1)$ for $j=1, \ldots, r_i$, which gives
    \begin{equation} \label{eq:block_d_qomm}
        d_i I_{r_i} = 2\tA_i (\Sigma_i^2 - I_{r_i}),
    \end{equation}
    where $\tA_i=\diag(\ta_{i1}, \cdots, \ta_{ir_i})$ for $i=1, \ldots,
        k$. Note that for $V_i \in \bbC^{p_i \times r_i}$,
    \begin{equation*}
        V_i^* d_i I_{p_i} = d_i V_i^* = d_i I_{r_i} V_i^*,
    \end{equation*}
    thus we have $V^*D = D'V^*$, where $D' = \diag(d_1I_{r_1}, \cdots, d_k
        I_{r_k})$. Substitute $X=U\Sigma V^*$ into the first-order condition of stationary points
    \eqref{eq:Lagrange_qomm_zero_grad} we have
    \begin{equation*}
        2AX - AXX^*X - XX^*AX + XD
        = \left( AU(2I-\Sigma^2) - U(\Sigma^2 \tA - D') \right) \Sigma V^*.
    \end{equation*}
    Note that $AU(2I-\Sigma^2) - U(\Sigma^2 \tA - D')$ can be computed block
    by block according to partition $U=\left(U_1, \cdots, U_k\right)$. Take
    one block $U_i=\left(u_{i1}, \cdots, u_{ir_i}\right)$ for example,
    without loss of generality, we assume that $u_{i1}, \cdots, u_{i,
        r_i-1}$ are eigenvectors of $A$ except $u_{ir_i}$. 
    By the condition in \cref{thm:stationary_qomm} it leads to
    $\sigma_{ir_i}^2=2$. Therefore,
    \begin{align*}
         & \quad AU_i(2I-\Sigma_i^2)                                                                                               \\
         & = \left(u_{i1}, \cdots, u_{i,r_i-1}, Au_{ir_i}\right) \begin{pmatrix}
                                                                     \ta_{i1} &        &               &   \\
                                                                              & \ddots &               &   \\
                                                                              &        & \ta_{i,r_i-1} &   \\
                                                                              &        &               & 1
                                                                 \end{pmatrix} \begin{pmatrix}
                                                                                   2-\sigma_{i1}^2 &        &                      &   \\
                                                                                                   & \ddots &                      &   \\
                                                                                                   &        & 2-\sigma_{i,r_i-1}^2 &   \\
                                                                                                   &        &                      & 0
                                                                               \end{pmatrix} \\
         & = \left(u_{i1}, \cdots, u_{i,r_i-1}, u_{ir_i}\right) \begin{pmatrix}
                                                                    \ta_{i1} &        &               &            \\
                                                                             & \ddots &               &            \\
                                                                             &        & \ta_{i,r_i-1} &            \\
                                                                             &        &               & \ta_{ir_i}
                                                                \end{pmatrix} \begin{pmatrix}
                                                                                  2-\sigma_{i1}^2 &        &                      &   \\
                                                                                                  & \ddots &                      &   \\
                                                                                                  &        & 2-\sigma_{i,r_i-1}^2 &   \\
                                                                                                  &        &                      & 0
                                                                              \end{pmatrix}  \\
         & = U_i \tA_i (2I - \Sigma_i^2).
    \end{align*}
    Thus, according to \eqref{eq:block_d_qomm} we have
    \begin{equation*}
        AU_i(2I-\Sigma_i^2) - U_i(\Sigma_i^2 \tA_i - d_iI_{r_i}) =
        U_i(\tA_i (2I - \Sigma_i^2) - \Sigma_i^2\tA_i + d_i I_{r_i}) = 0,
    \end{equation*}
    which implies that
    \begin{equation*}
        2AX - AXX^*X - XX^*AX + XD =
        \left( AU(2I-\Sigma^2) - U(\Sigma^2 \tA - D') \right) \Sigma V^*=0.
    \end{equation*}
    Therefore, such $X$ is a stationary point of qOMM.
\end{proof}

To further distinguish the local minimizers from stationary points, we
introduce the concept of strong Schur-Horn continuity, which plays a key
role during the construction of descent direction near the stationary
point that is not a local minimizer.

\begin{definition}[Strong Schur-Horn Continuity \cite{chen2024continuity}] \label{def:sSHcont}
    Suppose $A \in \bbC^{n \times n}$ is a Hermitian matrix with an
    eigendecomposition $A = Q \Lambda Q^*$, where $Q$ is the unitary
    eigenvector matrix and $\Lambda$ is the diagonal eigenvalue matrix.
    Matrix $A$ is strongly Schur-Horn continuous if, for any perturbed
    eigenvalues $\tLambda$ satisfying $\tr(\tLambda) = \tr(\Lambda)$ and
    $\fnorm{\tLambda - \Lambda} = O(\veps)$ for $\veps > 0$ sufficiently
    small, there exists a Hermitian matrix $\tB = G_2 Q G_1 \tLambda G_1^*
        Q^* G_2^*$ such that
    \begin{enumerate}
        \item $\diag(\tB)=\diag(A)$,
        \item $G_1$ and $G_2$ are unitary matrices, and
        \item $\fnorm{G_i - I} = O(\veps^{1/2})$ for $i = 1, 2$.
    \end{enumerate}
\end{definition}

Furthermore, we have the following proposition for strong Schur-Horn
continuity.

\begin{proposition}[\cite{chen2024continuity}] \label{prop:sSHcont_strict_major}
    If $A\in \bbC^{n\times n}$ is a Hermitian matrix whose eigenvalue is
    strictly majorized by the diagonal, then $A$ is strongly Schur-Horn
    continuous.
\end{proposition}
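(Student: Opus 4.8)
The claim is a quantitative stability statement for the Schur--Horn correspondence, and the square-root rate is the tell-tale sign of a second-order (bifurcation-type) phenomenon; so the plan is to combine the classical constructive Schur--Horn theorem with a Lyapunov--Schmidt-style perturbation argument. Write $A = Q\Lambda Q^*$, set $\lambda := \diag(\Lambda)$, $d := \diag(A)$, and (after a permutation) take both sorted non-decreasingly. Given $\tLambda$ with $\tr(\tLambda) = \tr(\Lambda)$ and $\fnorm{\tLambda - \Lambda} = O(\veps)$, strictness of the majorization means each partial-sum inequality in \eqref{eq:strict-major} holds with a fixed positive margin; since sorted partial sums are Lipschitz in the vector, $\diag(\tLambda)$ is still majorized by $d$ once $\veps$ is small, so an admissible $\tB$ exists by Schur--Horn. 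The real work is to get one that is $O(\veps^{1/2})$-close to $A$ and of the prescribed form. I would take $G_1 = I$ (the two-sided form is used only for convenience and certainly permits this) and seek $\tB = G_2 C G_2^*$ with $C := Q\tLambda Q^*$ and $G_2 = \exp(S)$, $S$ skew-Hermitian, $\fnorm{S} = O(\veps^{1/2})$. Conjugation keeps the spectrum equal to $\diag(\tLambda)$ automatically, so the only equation is $\diag(\tB) = d$; and $\fnorm{C - A} = \fnorm{\tLambda - \Lambda} = O(\veps)$, hence the initial residual $d - \diag(C)$ is $O(\veps)$.

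Expanding $\exp(S) C \exp(-S) = C + [S,C] + \tfrac12[S,[S,C]] + \cdots$, the relevant linearization is $L(S) := \diag([S,C])$, a linear map into the trace-zero vectors of $\bbR^n$. A direct computation gives $L(S) = \sum_{i<k} 2\,\mathrm{Re}(S_{ik}\overline{C_{ik}})\,(e_i - e_k)$, so $\mathrm{range}(L) = \mathrm{span}\{\,e_i - e_k : C_{ik} \neq 0\,\}$; up to $O(\veps)$ this is $\mathrm{span}\{e_i - e_k : A_{ik} \neq 0\}$, which equals all trace-zero vectors exactly when $A$ is irreducible and, when $A$ decomposes into irreducible blocks $B_1,\dots,B_m$ after a permutation, has an $(m-1)$-dimensional complement, namely the block-wise-constant trace-zero vectors that redistribute trace among the blocks. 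Accordingly I would split the residual $d - \diag(C) = \rho^{\parallel} + \rho^{\perp}$, with $\rho^{\parallel}\in\mathrm{range}(L)$ and $\rho^{\perp}$ in that block-transfer complement, both $O(\veps)$. The part $\rho^{\parallel}$ is handled by an implicit-function iteration for the range equation: $L$ has a bounded right inverse on its range, producing $S^{\parallel}$ with $\fnorm{S^{\parallel}} = O(\veps)$, and the nonlinear remainder is absorbed by Newton corrections that stay in $\mathrm{range}(L)$. The part $\rho^{\perp}$ must be produced at second order: conjugating $C$ by $\exp(\theta(q_a q_b^* - q_b q_a^*))$, where $q_a, q_b$ are eigenvectors of $A$ lying in distinct blocks with eigenvalues $\mu \neq \nu$, leaves $\diag$ unchanged at order $\theta$ and changes it by $\theta^2(\nu - \mu)\bigl(\diag(q_a q_a^*) - \diag(q_b q_b^*)\bigr)$ at order $\theta^2$, i.e.\ transfers order $\theta^2|\nu-\mu|$ of trace between the two blocks. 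Choosing $\theta = O(\veps^{1/2})$ thus realizes the required $O(\veps)$ transfer with an $O(\veps^{1/2})$ unitary perturbation, which is exactly the claimed rate.

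The place where strict majorization is indispensable is the feasibility of those second-order transfers. Plain majorization already forces the block structure on $A$ (it is precisely the presence of an intermediate partial-sum equality), and a priori two blocks could each carry only a single repeated eigenvalue with those two eigenvalues coinciding, so that $\nu - \mu$ vanishes (or is itself $O(\veps)$) and the rotation angle needed blows up beyond $O(\veps^{1/2})$. Strict majorization excludes this globally: a constant spectrum $c\mathbf{1}$ cannot be strictly majorized (one would need $c < d^{\uparrow}_1$, forcing $\sum_i d_i > nc = \tr(\Lambda)$), so the spectral spread $\Delta := \lambda_{\max}(A) - \lambda_{\min}(A)$ is a fixed positive constant, and every block owns an eigenvalue at distance $\ge \Delta/2$ from $\lambda_{\min}(A)$ or from $\lambda_{\max}(A)$; hence the ``block graph'' with an edge whenever two blocks hold eigenvalues at least $\Delta/2$ apart is connected, through whichever block(s) realize the extreme eigenvalues. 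Any admissible $\rho^{\perp}$ can then be routed as a sum of such efficient transfers, each an $O(\veps^{1/2})$ rotation; these are combined with the within-block first-order corrections, and assembling all rotations into $G_2 = \exp(S)$ with a final closing Newton step gives $\diag(\tB) = d$ exactly while keeping $\fnorm{S} = O(\veps^{1/2})$. The main obstacle is this last circle of ideas: identifying $\mathrm{coker}(L)$ with the block structure, realizing each cokernel direction by an explicit second-order rotation, and verifying that strict (not merely plain) majorization is exactly the condition that keeps the second-order coefficients bounded away from degeneracy so that the routing succeeds at rate $\veps^{1/2}$; the first-order analysis and the bookkeeping are routine.
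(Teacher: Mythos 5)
The paper does not prove this proposition at all---it is imported verbatim from \cite{chen2024continuity}---so there is no in-paper argument to compare against; I can only assess your construction on its own terms. Your architecture (take $G_1=I$, set $C=Q\tLambda Q^*$, fix the diagonal by conjugation, split the residual into the range of $L(S)=\diag([S,C])$ handled at first order within irreducible blocks and a cokernel of block-trace transfers handled by second-order rotations of size $O(\veps^{1/2})$, then close with a Newton iteration) is sensible, and your computations are right: the formula for $L$, the identification of $\operatorname{range}(L)$ with the within-block trace-zero vectors, the vanishing of the first-order inter-block diagonal change, and the second-order transfer $\theta^2(\nu-\mu)\bigl(\diag(q_aq_a^*)-\diag(q_bq_b^*)\bigr)$ all check out, as does the use of strictness to preserve majorization (hence feasibility) under $O(\veps)$ spectral perturbations.

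The genuine gap is in the routing step, which you yourself flag as the main obstacle but then justify incorrectly. A second-order rotation in the plane of $q_a$ (eigenvalue $\mu$) and $q_b$ (eigenvalue $\nu$) moves trace \emph{only} from the block holding the larger eigenvalue toward the block holding the smaller one, since $\theta^2\geq 0$; your undirected ``$\Delta/2$-graph'' connectivity ignores this one-way constraint. Concretely, take $A=B_1\oplus B_2$ with $B_1=\begin{pmatrix}5&5\\5&5\end{pmatrix}$ (spectrum $\{0,10\}$) and $B_2=\begin{pmatrix}9.55&0.05\\0.05&9.55\end{pmatrix}$ (spectrum $\{9.5,9.6\}$): the eigenvalues are strictly majorized by the diagonal, $\Delta/2=5$, and the only pairs with gap $\geq\Delta/2$ pair $B_2$'s eigenvalues with $0\in B_1$, so all of your ``efficient transfers'' push trace out of $B_2$; yet a perturbation that lowers $9.6$ by $\veps$ and raises $10$ by $\veps$ forces $B_2$ to \emph{gain} trace, which none of your constructed rotations can do. The repair needs exactly the place where strictness must really be invoked: applying the strict partial-sum inequality at $k=$ the total size of any union of irreducible blocks shows no such union can sit spectrally (weakly) on top of the rest, hence the \emph{directed} transfer graph (edge $B\to B'$ iff some eigenvalue of $B$ strictly exceeds some eigenvalue of $B'$) is strongly connected with gaps bounded below by $A$-dependent constants, which is what arbitrary zero-sum $O(\veps)$ demands require. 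Your stated justification---positive spectral spread plus every block owning an eigenvalue far from one extreme---does not deliver this, so as written the argument fails precisely at the step that carries the content of the hypothesis; the remaining bookkeeping (bounded right inverse of $L$ on its range, Newton closure at $O(\veps^{1/2})$, the block-diagonal choice of $Q$ when eigenvalues are shared across blocks) is plausible but would also need care.
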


\Cref{lem:oblique_perturb} essentially applies the strong Schur-Horn
continuity to singular values of a matrix, and obtains a construction for
perturbed singular value decomposition of a point in oblique manifold.

\begin{lemma} \label{lem:oblique_perturb}
    Suppose $X \in \ob(n, p)$ is non-unitary with a singular value
    decomposition $X=U \Sigma V^*$, where $U \in \bbC^{n\times p}$,
    $\Sigma \in \bbR^{p \times p}$ whose diagonal entries may not be in a
    non-increasing order and $V \in \bbC^{p \times p}$. Given any
    perturbed singular values $\tSigma \in \bbR^{p \times p}$ such that
    \begin{equation*}
        \tr(\tSigma^2) = \tr(\Sigma^2) \quad \text{and} \quad
        \fnorm{\tSigma^2 - \Sigma^2} = O(\veps),
    \end{equation*}
    for $\veps > 0$ sufficiently small, there exist unitary $\tV \in
        \bbC^{p \times p}$ such that $\tX := U \tSigma \tV^* \in \ob(n,p)$
    and
    \begin{equation*}
        \fnorm{\tX - X}= O(\veps^{1/2}).
    \end{equation*}
\end{lemma}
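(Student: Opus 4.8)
The plan is to reduce the statement to the strong Schur-Horn continuity of the Gram matrix $X^*X$ and then account for the constants. First I would record the elementary observation that, since the columns of $U$ are orthonormal, $U^*U = I_p$, so for any unitary $\tV$ the candidate $\tX = U\tSigma\tV^*$ satisfies $\tX^*\tX = \tV\tSigma^2\tV^*$. Hence $\tX \in \ob(n,p)$ precisely when $\diag(\tV\tSigma^2\tV^*) = \mathbf{1}$, and the task becomes: produce a unitary $\tV$ close to $V$ that realizes the diagonal $\mathbf{1}$ with the prescribed eigenvalues $\tsigma_1^2, \dots, \tsigma_p^2$.

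Next I would apply the Schur-Horn machinery to $M := X^*X = V\Sigma^2 V^*$, a Hermitian matrix with $\diag(M) = \mathbf{1}$, eigenvector matrix $V$, and eigenvalues $\sigma_1^2, \dots, \sigma_p^2$. Since $X$ is non-unitary, $\Sigma \neq I$, so $(\sigma_j^2)_j \neq \mathbf{1}$; moreover $\sum_j \sigma_j^2 = \tr(X^*X) = p$. Then \cref{lem:majorization_cond} shows the eigenvalues of $M$ are strictly majorized by its diagonal, and \cref{prop:sSHcont_strict_major} gives that $M$ is strongly Schur-Horn continuous. Invoking \cref{def:sSHcont} with perturbed eigenvalues $\tLambda = \tSigma^2$ — the required hypotheses $\tr(\tSigma^2) = \tr(\Sigma^2)$ and $\fnorm{\tSigma^2 - \Sigma^2} = O(\veps)$ are exactly those assumed — yields unitary $G_1, G_2$ with $\fnorm{G_i - I} = O(\veps^{1/2})$ and $\diag\!\big(G_2 V G_1 \tSigma^2 G_1^* V^* G_2^*\big) = \diag(M) = \mathbf{1}$. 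Setting $\tV := G_2 V G_1$, which is unitary, and $\tX := U\tSigma\tV^*$, the observation of the first paragraph gives $\tX^*\tX = G_2 V G_1 \tSigma^2 G_1^* V^* G_2^*$, hence $\tX \in \ob(n,p)$.

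It then remains to estimate $\fnorm{\tX - X}$. Because $U^*U = I_p$, $\fnorm{\tX - X} = \fnorm{\tSigma\tV^* - \Sigma V^*}$, and inserting $\pm\,\Sigma\tV^*$ gives $\fnorm{\tX - X} \le \fnorm{\tSigma - \Sigma} + \twonorm{\Sigma}\,\fnorm{\tV - V}$. For the second summand, $\twonorm{\Sigma} = \max_j \sigma_j \le \sqrt{p}$, and writing $G_2 V G_1 - V = G_2 V(G_1 - I) + (G_2 - I)V$ yields $\fnorm{\tV - V} \le \fnorm{G_1 - I} + \fnorm{G_2 - I} = O(\veps^{1/2})$. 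For the first summand, from $\tsigma_j^2 - \sigma_j^2 = (\tsigma_j - \sigma_j)(\tsigma_j + \sigma_j)$ one gets $|\tsigma_j - \sigma_j| \le |\tsigma_j^2 - \sigma_j^2|/\sigma_j = O(\veps)$ when $\sigma_j > 0$ and $\tsigma_j = O(\veps^{1/2})$ when $\sigma_j = 0$, so $\fnorm{\tSigma - \Sigma} = O(\veps^{1/2})$ in all cases. Combining, $\fnorm{\tX - X} = O(\veps^{1/2})$.

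I expect the one genuine subtlety to be this last step: passing from $\fnorm{\tSigma^2 - \Sigma^2} = O(\veps)$ to $\fnorm{\tSigma - \Sigma} = O(\veps^{1/2})$ truly loses a square root once $X$ has vanishing singular values, and this loss — compounded with the $\veps^{1/2}$ already built into strong Schur-Horn continuity — is exactly what fixes the exponent in the conclusion at $1/2$. Everything else is bookkeeping: verifying the hypotheses of \cref{lem:majorization_cond}, \cref{prop:sSHcont_strict_major}, and \cref{def:sSHcont}, and checking that $\tV = G_2 V G_1$ inherits unitarity.
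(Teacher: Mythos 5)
Your proposal is correct and follows essentially the same route as the paper: apply \cref{lem:majorization_cond} and \cref{prop:sSHcont_strict_major} to $X^*X$, take $\tV = G_2 V G_1$ from the strong Schur-Horn continuity, and bound $\fnorm{\tX - X}$ by splitting off $\fnorm{\tSigma-\Sigma}$ and $\fnorm{\tV - V}$. If anything, you are slightly more careful than the paper in justifying $\fnorm{\tSigma - \Sigma} = O(\veps^{1/2})$ in the presence of zero singular values, which is a welcome addition rather than a deviation.
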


\begin{proof}
    Denote $H := X^*X = V \Sigma^2 V^*$ and $s := \diag(\Sigma^2)$. Since
    $X$ is a point on oblique manifold, we have $\sum_{i=1}^p s_i = \tr(H) =
        p$ and $\diag(H) = \mathbf{1}$. According to
    \cref{lem:majorization_cond}, $s$ is strictly majorized by $\mathbf{1}$. 
    Therefore,
    \cref{prop:sSHcont_strict_major} implies that $H$ is strongly Schur-Horn
    continuous, and there exists
    \begin{equation*}
        \tH = G_2 V G_1 \tSigma^2 G_1^* V^* G_2^*
    \end{equation*}
    such that $\diag(\tH)=\diag(H)=\mathbf{1}$, $G_i$ is unitary, and
    $\fnorm{G_i - I} = O(\veps^{1/2})$ for $i = 1, 2$. Let $\tV = G_2 V
        G_1$. Then, the distance between $V$ and $\tV$ is bounded by
    \begin{equation*}
        \fnorm{\tV - V} = \fnorm{G_2 V G_1 - G_2 V + G_2 V - V} \leq
        \fnorm{G_1 - I} + \fnorm{G_2 - I} = O(\veps^{1/2}),
    \end{equation*}
    where we use the unitary invariance of Frobenius norm. Constructing the
    perturbed $\tX$ as $\tX = U \tSigma \tV^*$, we know that $\tX \in
        \ob(n,p)$ following $\diag(\tX^* \tX) = \diag(\tH) = \mathbf{1}$.
    Furthermore, the distance between $X$ and $\tX$ could be bounded as,
    \begin{equation*}
        \fnorm{\tX - X} = \fnorm{U \tSigma \tV^* - U \Sigma V^*}
        \leq \fnorm{\tSigma - \Sigma}
        + \fnorm{\Sigma} \fnorm{\tV^* - V^*} = O(\veps^{1/2}),
    \end{equation*}
    which completes the proof.
\end{proof}

\begin{proof}[Proof of \cref{thm:local_min_qomm}]
    Given the combined reduced SVD of any stationary point $X=U\Sigma V^*$ as
    in \eqref{eq:qomm_x_svd}, one can always expand the SVD to include zero
    singular values, i.e.,
    \begin{equation*}
        X_i = U_i \Sigma_i V_i^* =
        \begin{pmatrix}
            U_i & U_{i\bot}
        \end{pmatrix}
        \begin{pmatrix}
            \Sigma_{i} & 0 \\ 0 & 0
        \end{pmatrix}
        \begin{pmatrix}
            V_i^* \\ V_{i\bot}^*
        \end{pmatrix}
        =: \bU_i \bSigma_i \bV_i^*,
    \end{equation*}
    where $\bU_i\in \bbC^{n\times p_i}$ is a partial unitary matrix,
    $\bSigma_i \in \bbR^{p_i \times p_i}$ is a diagonal matrix, $\bV_i \in
        \bbC^{p_i\times p_i}$ is a unitary matrix, and $\bU_i^* \bU_j = 0$ for all
    $1 \leq i \neq j \leq k$. Combining these SVDs similarly to that in
    \eqref{eq:qomm_x_svd}, we obtain,
    \begin{equation*}
        \begin{split}
            X &=
            \begin{pmatrix}
                \bU_1 \bSigma_1 \bV_1^* & \cdots & \bU_k \bSigma_k \bV_k^*
            \end{pmatrix} \\
            &=
            \begin{pmatrix}
                \bU_1 & \cdots & \bU_k
            \end{pmatrix}
            \begin{pmatrix}
                \bSigma_1 &  & \\ & \ddots & \\ & & \bSigma_k
            \end{pmatrix}
            \begin{pmatrix}
                \bV_1^* &  & \\ & \ddots & \\ & & \bV_k^*
            \end{pmatrix}
            = \bU \bSigma \bV^*.            
        \end{split}
    \end{equation*}
    Denoting the $j$-th diagonal entries of $\bU_i^* A \bU_i$ as $\ta_{ij}$
    for $j = 1, \ldots, p_i$ and $i = 1, \ldots, k$, the objective value at $X
        = \bU \bSigma \bV^*$ admits,
    \begin{equation*}
        E_0(X)
        = \sum_{i = 1}^k \tr\left( (2I - \bSigma_i^2)
        \bSigma_i \bU_i^* A \bU_i \bSigma_i \right)
        = \sum_{i = 1}^k \sum_{j = 1}^{r_i}
        \ta_{ij} (2-\sigma_{ij}^2)\sigma_{ij}^2.
    \end{equation*}
    Notice that $\ta_{ij}$s are all negative since $A$ is negative definite.
    In the following, we give the decay direction for low-rank $X$ and
    full-rank $X$, respectively.

    First, we consider stationary points $X$ that are of strict low-rank. In
    arbitrary $\veps$-neighborhood of $X$ for $\veps > 0$ sufficiently small,
    we construct a perturbed point $\tX = \bU \tSigma \tV^*$ such that
    \begin{equation*}
        \tSigma^2 = (1 - \veps)\Sigma^2 + \veps I.
    \end{equation*}
    One has $\tr(\tSigma^2)=\tr(\Sigma^2)$ and $\fnorm{\tSigma^2 - \Sigma^2} =
        O(\veps)$. According to \cref{lem:oblique_perturb}, there exists $\tV \in
        \bbC^{p\times p}$ such that $\tX = \bU \tSigma \tV^* \in \ob(n, p)$ and
    $\fnorm{\tX - X} = O(\veps^{1/2})$. The difference between the objective
    values at $\tX$ and $X$ is bounded as,
    \begin{equation*}
        E_0(\tX) - E_0(X) = \sum_{i=1}^{k} \sum_{j=1}^{r_i}
        \ta_{ij}(2 - \veps) \veps(\sigma_{ij}^2 -1)^2
        + \sum_{i=1}^{k} \sum_{j=r_i+1}^{p_i} \ta_{ij}(2 - \veps) \veps < 0.
    \end{equation*}
    Thus, for sufficiently small $\veps > 0$, there exists $\tX \in \ob(n,p)$
    such that $\fnorm{\tX - X} = O(\veps^{1/2})$ and $E_0(\tX) < E_0(X)$.
    The rank-deficient stationary points are strict saddle points.

    Next, we consider the full rank stationary point $X$. According to
    \cref{thm:stationary_qomm}, full rank stationary point is of form $X = U
        V^*$, with $U \in \bbC^{n \times p}$ being orthonormal eigenvectors of $A$
    and $V \in \bbC^{p \times p}$ is unitary. Denote the unitary eigenvector
    matrix of $A$ as $Q = \begin{pmatrix} q_1 & \cdots & q_n \end{pmatrix}$,
    whose corresponding eigenvalues are $\lambda_1 \leq \cdots \leq
        \lambda_n$. If $U$ is not spanned by eigenvectors corresponding to the $p$
    smallest eigenvalues, then there exist a column in $U$, denote as $u_s$,
    with eigenvalue $\tlambda > \lambda_p$, and another $q_t \not\in
        \mathrm{span}(U)$ with $t \leq p$ and $\lambda_t \leq \lambda_p <
        \tlambda$. We construct a perturbed point as
    \begin{equation*}
        \tX = \tU V^* =
        \begin{pmatrix}
            u_1     & \cdots & u_{s-1} & \sqrt{1-\veps^2} u_s + \veps q_t &
            u_{s+1} & \cdots & u_p
        \end{pmatrix}
        V^*.
    \end{equation*}
    We could verify that $\tX\in \ob(n,p)$ and $\fnorm{\tX - X}=O(\veps)$. The
    difference between objective values at $\tX$ and $X$ is bounded as,
    \begin{equation*}
        E_0(\tX) - E_0(X) = \veps^2(\lambda_t - \tlambda) < 0.
    \end{equation*}
    Thus, if $U$ is not spanned by eigenvectors corresponding to the $p$
    smallest eigenvalues, $X$ is not a local minimizer.

    We have previously demonstrated that any local minimizer of qOMM takes the
    form $X=Q_p V^*$, where $Q_p$ are eigenvectors of $A$ corresponding to
    the $p$ smallest eigenvalues $\Lambda_p$ and $V\in\bbC^{p\times p}$
    unitary. Substituting $X=Q_p V^*$ into the energy functional $E_0(X)$ it
    yields the same objective value $E_0(Q_p V^*)=\tr(\Lambda_p)$. Hence,
    any local minimum of qOMM is also a global minimum.

    Finally, note that oblique manifold $\ob(n, p)$ is a bounded and closed
    set and the objective function $E_0(X)$ is smooth with respect to $X$,
    there exists a minimum of $E_0(X)$ over $\ob(n,p)$. Thus, we conclude
    that any matrix in the form $X=Q_p V^*$ is a global minimizer of qOMM and
    there is no other local minimizer.
\end{proof}

\section{Proofs of qTPM} \label{sec:proof_qtpm}

The proofs of theorems of qTPM follow a similar procedure as qOMM above,
hence we present the main steps and emphasize the differences owing to
the nature of qTPM. Some detailed derivation may not appear again for
simplicity.

\begin{proof}[Proof of \cref{thm:stationary_qtpm}]
    The Lagrangian multiplier function of qTPM is
    \begin{equation*}
        \begin{split}
            L(X, D) := & g_{\mu}(X) + \frac{1}{2} \sum_{i=1}^p d_i(x_i^*x_i -1 ) \\
            =          & \frac{1}{2}\tr(X^* A X) + \frac{\mu}{4}\tr(X^*XX^*X)
            + \frac{1}{2}\tr(D^\top (X^*X - I)),
        \end{split}
    \end{equation*}
    then the first-order condition of stationary points gives
    \begin{subnumcases}{ \label{eq:Lagrange_qtpm_zero_grad}} 
        AX + \mu XX^*X + XD = 0, \label{eq:Lagrange_qtpm_zero_grad_X} \\
        X \in \ob(n,p). \label{eq:Lagrange_qtpm_zero_grad_D}
    \end{subnumcases}
    Note that $g_{\mu}(X)$ is invariant under right multiplication of
    permutation matrices. Similar to the proof of
    \cref{thm:stationary_qomm}, without loss of generality, we assume
    \begin{equation*}
        D=\diag \left( d_1 I_{p_1}, \cdots, d_k I_{p_k} \right),
    \end{equation*}
    with $d_1 < d_2 < \cdots < d_k$ and $p_1 + \cdots + p_k = p$.
    Correspondingly, we divide the columns of $X$ into $k$ parts $X=\left(
        X_1, \cdots, X_k \right)$ in the same way as $D$. Applying singular
    value decomposition to each block of $X$ one has $X_i = U_i \Sigma_i
        V_i^*$ where $U_i \in \bbC^{n\times p_i}, \Sigma_i\in\bbR^{p_i\times
            p_i}$ and $V_i \in \bbC^{p_i \times p_i}$ for $i=1, \ldots, k$. Thus,
    $$
        \begin{aligned}
            X & = \begin{pmatrix}
                      U_1 & \cdots & U_k
                  \end{pmatrix} \begin{pmatrix}
                                    \Sigma_1 &  & \\ & \ddots & \\ & & \Sigma_k
                                \end{pmatrix} \begin{pmatrix}
                                                  V_1^* &  & \\ & \ddots & \\ & & V_k^*
                                              \end{pmatrix} := U \Sigma V^*.
        \end{aligned}
    $$
    Multiplying $X^*$ from the left at both sides of
    \eqref{eq:Lagrange_qtpm_zero_grad_X} we have
    \begin{equation*}
        X^* A X + \mu X^*XX^*X + X^*XD =0,
    \end{equation*}
    which implies that $X^*XD = DX^*X$ and further, $(d_i - d_j)X_j^*X_i=0$
    for $i,j=1, \ldots, k$. Thus, we have $X_j^*X_i=0$ for $i\neq j$.
    Substitute the block partition of $X$ and $D$ into
    \eqref{eq:Lagrange_qtpm_zero_grad_X}, utilizing $X_j^*X_i=0$ for $i\neq
        j$ it yields
    \begin{equation} \label{eq:zero_grad_qtpm_block}
        AX_i + \mu X_i X_i^* X_i + d_i X_i = 0, \quad i=1, \ldots, k.
    \end{equation}
    Substitute $X_i=U_i\Sigma_i V_i^*$ into \eqref{eq:zero_grad_qtpm_block}
    for each block, multiplying $U_i^*$ from the left and multiplying
    $V_i$ from the right we arrive at
    \begin{equation} \label{eq:zero_grad_elem_qtpm}
        \tA_i \Sigma_i  + \mu  \Sigma_i^3 = - d_i \Sigma_i, \quad i=1, \ldots, k,
    \end{equation}
    where we denote $\tA_i = U_i^* A U_i$.

    To determine $U$, denote the $j$-th column of $U_i$ as $u_{ij}$,
    comparing off-diagonal terms of \eqref{eq:zero_grad_elem_qtpm} we have
    \begin{equation*}
        u_{ij_1}^* A u_{ij_2} \sigma_{i j_2} = 0, \quad
        j_1, j_2 =1, \ldots, r_i, ~j_1 \neq j_2.
    \end{equation*}
    Denote $\rank(X_i)=r_i$ for $i=1, \ldots, k$. Since $\sigma_{ij} >0$ for
    $1 \leq j \leq r_i$, $u_{ij_1}^* A u_{ij_2}=0$ for $ j_1, j_2 =1,
        \ldots, r_i, j_1 \neq j_2$. Denote the corresponding reduced singular
    value decomposition of $X_i$ as
    \begin{equation*}
        X_i=U_i \Sigma_i V_i^* =
        \begin{pmatrix} U_{i1} & U_{i2} \end{pmatrix}
        \begin{pmatrix} \Sigma_{i1} & 0 \\ 0 & 0 \end{pmatrix}
        \begin{pmatrix} V_{i1}^* \\ V_{i2}^* \end{pmatrix}
        =U_{i1} \Sigma_{i1} V_{i1}^*.
    \end{equation*}
    Then we know that $U_{i1}^* A U_{i1}$ is diagonal. Furthermore, since
    $X_i$ and $U_{i1}$ are invariant subspaces of matrix $A$ by
    \eqref{eq:zero_grad_qtpm_block}, $U_{i1}$ consists of mutually
    orthogonal eigenvectors of $A$. Besides, from $X_i^*X_j=0$ for $i\neq
        j$, we have $U_{i1}^* U_{j1}=0$. Thus, in order to give a compact form
    of $X$, we choose $U_{i2}$ corresponding to zero singular values for
    $i=1, \ldots, k$ to be eigenvectors of $A$ such that $U=\left(U_1,
        \cdots, U_k\right)$ is unitary, i.e., $U^*U=I$.

    Next we figure out the singular values of $X_i$ for $i=1, \cdots, k$.
    Denote the diagonal entries of $U_i^* A U_i$ as $\lambda_{ij}$ for $j=1,
        \ldots, p_i$. Comparing diagonal terms of \eqref{eq:zero_grad_elem_qtpm}
    we have
    \begin{equation*}
        \lambda_{ij}\sigma_{ij} + \mu \sigma_{ij}^3 =-d_i\sigma_{ij}, \quad j =1, \ldots, p_i.
    \end{equation*}
    Since $A$ is negative definite, $\lambda_{ij}<0$. Note that $\sigma_{ij}
        >0$ for $1 \leq j \leq r_i$, we have
    \begin{equation*}
        \sigma_{ij}^2 = -\frac{\lambda_{ij} + d_i}{\mu}, \quad j =1, \ldots, r_i.
    \end{equation*}
    Summing up $j$ from $1$ to $r_i$ we obtain,
    \begin{equation*}
        d_i = -\frac{1}{r_i} ( \sum_{j=1}^{r_i} \lambda_{ij} + \mu p_i ),
    \end{equation*}
    where we adopt the fact that $\tr(X_i^* X_i) = \sum_{j=1}^{r_i}
        \sigma_{ij}^2=p_i$ by the first-order condition
    \eqref{eq:Lagrange_qtpm_zero_grad_D}. Therefore, we get
    \begin{equation*}
        \sigma_{ij}^2 = \frac{p_i}{r_i}
        - \frac{1}{\mu}(\lambda_{ij} - \frac{1}{r_i}\sum_{l=1}^{r_i} \lambda_{il})
        := \frac{p_i}{r_i} - \frac{1}{\mu}\left( \lambda_{ij} - \blambda_i \right),
        \quad j=1, \ldots, r_i.
    \end{equation*}

    Now we check the consistency for stationary points satisfying the
    first-order condition \eqref{eq:Lagrange_qtpm_zero_grad}. Note that $A$
    is negative definite and $\mu > |\lambda_{\min}(A)|$, we have
    \begin{equation*}
        \mu > -\lambda_{\min}(A) \geq -\blambda_i \geq \frac{r_i}{p_i}(\lambda_{ij} - \blambda_i),
        \quad j=1, \ldots, r_i, i=1, \ldots, k,
    \end{equation*}
    which ensures that $\sigma_{ij}^2 > 0$ for $j=1, \ldots, r_i$ and $i=1,
        \ldots, k$. Given $i=1, \ldots, k$, if $\sigma_{ij}^2$ are not all ones
    for $j=1, \ldots, p_i$, according to \cref{lem:majorization_cond},
    $\{\sigma_{ij}\}_{j=1}^{p_i}$ and $\mathbf{1}$ satisfy the majorization
    relation. Thus, by Schur-Horn theorem, there exists
    $V_i\in\bbC^{p_i\times p_i}$ unitary such that $\diag(V_i \Sigma_i^2
        V_i^*)=\mathbf{1}$. Hence, we conclude that there always exists unitary
    $V_i$ such that $X_i=U_i \Sigma_i V_i$ satisfies the oblique manifold
    constraint.

    Finally, we verify that any $X$ satisfying the conditions in
    \cref{thm:stationary_qtpm} is indeed a stationary point of qTPM, i.e.,
    there exists a diagonal matrix $D$ such that
    \begin{equation*}
        AX + \mu XX^*X + XD = 0.
    \end{equation*}
    Given the singular value decomposition $X=U\Sigma V^*$ as in
    \cref{thm:stationary_qtpm}, let
    \begin{equation*}
        D = \diag(d_1I_{p_1}, \cdots, d_k I_{p_k}), \quad
        d_i = -\frac{1}{r_i}(\mu p_i+ \sum_{j=1}^{r_i} \lambda_{ij}), ~i=1, \ldots, k.
    \end{equation*}
    Then we have $V^*D=DV^*$. Since $AU=U\Lambda$, where
    $$ \Lambda=\diag\left(\lambda_{11}, \cdots, \lambda_{1p_1}, \cdots,
        \lambda_{k1}, \cdots, \lambda_{kp_k}\right), $$ 
    it yields
    \begin{equation*}
        AX + \mu XX^*X + XD = U\Sigma\left(\Lambda + \mu\Sigma^2 + D \right) V^*.
    \end{equation*}
    From the expression of $\Sigma$ in \cref{thm:stationary_qtpm}, one can
    easily see that $\Sigma\left(\Lambda + \mu\Sigma^2 + D \right)=0$ and
    hence $AX + \mu XX^*X + XD = 0$. Therefore, such $X$ is a stationary
    point of qTPM.
\end{proof}

\begin{proof}[Proof of \cref{thm:local_min_qtpm}]

    According to \cref{thm:stationary_qtpm}, stationary points of qTPM
    \eqref{eq:qtpm} take the form $X=U\Sigma V^*$ with $U^*U=I_p$ and
    $V^*V=I_p$. Besides, $U^* A U = \Lambda$ is diagonal with eigenvalues
    $\lambda_{11}, \cdots, \lambda_{1p_1}, \cdots, \lambda_{k1}, \cdots,
        \lambda_{kp_k}$ of $A$ being its diagonal entries. Then we can rewrite
    the objective value of qTPM at stationary point $X$ as
    \begin{align*}
        g_{\mu}(X) & = \frac{1}{2} \tr(V\Sigma U^*AU \Sigma V^*)
        + \frac{\mu}{4} \tr(V\Sigma U^* U\Sigma  V^* V\Sigma U^* U\Sigma V^*)         \\
                   & = \frac{1}{2}\tr(\Lambda \Sigma^2) + \frac{\mu}{4} \tr(\Sigma^4)
        = \frac{1}{2} \sum_{i=1}^k \sum_{j=1}^{p_i}
        (\lambda_{ij} + \frac{\mu}{2}\sigma_{ij}^2)\sigma_{ij}^2.
    \end{align*}
    Thus, the objective value at stationary points relies on the eigenvalues and
    singular values. In the following, we construct descent directions for
    stationary points that are strictly saddle points.

    First, we show that any rank deficient stationary point is a strict
    saddle point. Given the singular value decomposition of $X=U\Sigma V^*$
    as in \cref{thm:stationary_qtpm}, if $X$ is rank deficient, there exists
    at least a block of $\Sigma$, denoted as $\Sigma_i$, such that $r_i <
        p_i$. The diagonal entries of $\Sigma_i$ are
    \begin{equation*}
        \sigma_{ij} = \sqrt{\frac{p_i}{r_i} - \frac{1}{\mu} \left( \lambda_{ij} - \blambda_i \right)},
        \quad j=1, \ldots, r_i; \quad \sigma_{ij}=0, \quad j=r_{i+1}, \ldots, p_i,
    \end{equation*}
    where we denote $\blambda_i=\frac{1}{r_i} \sum_{l=1}^{r_i}
        \lambda_{il}$. Given $\veps>0$ sufficiently small, consider the
    perturbation $\tSigma$ of singular values $\Sigma$ by introducing
    \begin{equation*}
        \tsigma_{i1}^2 = \sigma_{i1}^2 - \veps, \quad
        \tsigma_{ip_i}^2 = \sigma_{ip_i}^2 + \veps = \veps,
    \end{equation*}
    where we adopt the fact that $\sigma_{ip_i}=0$. All the other entries in
    $\tSigma$ are the same as $\Sigma$. Then we have
    $\tr(\tSigma^2)=\tr(\Sigma^2)$ and $\fnorm{\tSigma^2 -
            \Sigma^2}=O(\veps)$. According to \cref{lem:oblique_perturb}, there
    exists $\tV\in\bbC^{n\times n}$ such that $\tX=U\tSigma \tV^* \in
        \ob(n,p)$ and $\fnorm{\tX - X}=O(\veps^{1/2})$. Thus, $\tX$ is a
    perturbation of $X$ over oblique manifold with desired singular values.
    At this time, we have
    \begin{align*}
        g_{\mu}(\tX) - g_{\mu}(X)                                                                                
        & = \frac{1}{2}(\lambda_{i1} + \frac{\mu}{2}\tilde{\sigma}_{i1}^2)\tilde{\sigma}_{i1}^2 -
        \frac{1}{2}(\lambda_{i1} + \frac{\mu}{2}\sigma_{i1}^2)\sigma_{i1}^2 +
        \frac{1}{2}(\lambda_{ip_i} + \frac{\mu}{2}\tilde{\sigma}_{ip_i}^2)\tilde{\sigma}_{ip_i}^2                    \\
        & = \frac{1}{2}\left(\lambda_{ip_i} - (\blambda_i+\frac{p_i}{r_i}\mu)\right) \veps + \frac{1}{2}\mu \veps^2.
    \end{align*}
    Note that $\lambda_{ip_i}<0$ since $A$ is negative definite. Besides,
    \begin{equation*}
        \blambda_i + \frac{p_i}{r_i} \mu > \blambda_i + \mu > \blambda_i + |\lambda_{\min}(A)| \geq 0.
    \end{equation*}
    Therefore, $g_{\mu}(\tX)-g_{\mu}(X) < 0$ when $\veps > 0$ is
    sufficiently small. Such rank deficient stationary point has a descent
    direction and is a strict saddle point.

    Next we turn to the full rank stationary points $X=U\Sigma V^*$.
    According to \cref{thm:stationary_qtpm}, the singular value matrix
    $\Sigma$ of full rank $X$ satisfies
    \begin{equation*}
        \Sigma^2 = I - \frac{1}{\mu} \begin{pmatrix}
            \Lambda_1 &        &           \\
                      & \ddots &           \\
                      &        & \Lambda_k
        \end{pmatrix} +
        \frac{1}{\mu} \begin{pmatrix}
            \blambda_1 I_{p_1} &        &                    \\
                               & \ddots &                    \\
                               &        & \blambda_k I_{p_k}
        \end{pmatrix},
    \end{equation*}
    where we denote the average value $\blambda_i= \frac{1}{p_i}
        \sum_{j=1}^{p_i} \lambda_{ij}$ for $i= 1, \ldots, k$. Note that those
    blocks with the same average value can be merged through a permutation.
    Without loss of generality, we assume $\blambda_i \neq \blambda_j$ for
    $i, j \in \{1, \ldots, k\}, i\neq j$. Now we show that any singular
    value matrix $\Sigma$ with $k> 1$ is a strict saddle point. Assume that
    $X$ is a full rank stationary point with $k > 1$, then one can always
    find two blocks with indices $i$ and $j$ such that $\blambda_i <
        \blambda_j$. Given $\veps > 0$ sufficiently small, consider a
    perturbation $\widetilde{\Sigma}$ of original $\Sigma$ by introducing
    \begin{equation*}
        \tilde{\sigma}_{i1}^2 = \sigma_{i1}^2 + \veps, \quad
        \tilde{\sigma}_{j1}^2 = \sigma_{j1}^2 - \veps,
    \end{equation*}
    while keeping all the other singular values in $\widetilde{\Sigma}$ the
    same as $\Sigma$. Then we have $\tr(\tSigma^2)=\tr(\Sigma^2)$ and
    $\fnorm{\tSigma^2 - \Sigma^2}=O(\veps)$. According to
    \cref{lem:oblique_perturb}, there exists $\tV\in\bbC^{n\times n}$ such
    that $\tX=U\tSigma \tV^* \in \ob(n,p)$ and $\fnorm{\tX -
            X}=O(\veps^{1/2})$. Thus, $\tX$ is a perturbation of $X$ over oblique
    manifold with desired singular values. At this time, we have
    \begin{align*}
          & g_{\mu}(\tX) - g_{\mu}(X)                                                             \\
        = & \frac{1}{2}(\lambda_{i1} + \frac{\mu}{2}\tilde{\sigma}_{i1}^2)\tilde{\sigma}_{i1}^2 -
        \frac{1}{2}(\lambda_{i1} + \frac{\mu}{2}\sigma_{i1}^2)\sigma_{i1}^2 +
        \frac{1}{2}(\lambda_{j1} + \frac{\mu}{2}\tilde{\sigma}_{j1}^2)\tilde{\sigma}_{j1}^2 -
        \frac{1}{2}(\lambda_{j1} + \frac{\mu}{2}\sigma_{j1}^2)\sigma_{j1}^2                       \\
        = & \frac{1}{2}(\blambda_i - \blambda_j) \veps + \frac{1}{2}\mu \veps^2 < 0,
    \end{align*}
    when $\veps > 0$ is sufficiently small. Thus, those full rank stationary
    points $X$ with $k>1$ are all strict saddle points.

    Finally, we consider the selection of eigenvectors. We have shown that
    any local minimizer has a singular value decomposition $X=U \Sigma V^*$
    where $U$ consists of eigenvectors of matrix $A$, $V\in \bbC^{n\times
            n}$ unitary such that $\diag(X^*X)=\mathbf{1}$. Denote the ordered
    eigenvectors of $A$ as $\begin{pmatrix} q_1 & \cdots & q_n\end{pmatrix}$
    with corresponding eigenvalues $\lambda_1 \leq \cdots \leq \lambda_n$.
    If $U$ is not spanned by eigenvectors corresponding to the $p$ most
    smallest eigenvalues, then there exists a column in $U$, denoted as
    $u_s$, with eigenvalue $\tlambda > \lambda_p$, and another $q_t\notin
        \text{span}(U)$ with $t \leq p$ and $\lambda_t \leq \lambda_p <
        \tlambda$. We construct a perturbed point as
    \begin{equation*}
        \tX = \tU \Sigma V^* = \begin{pmatrix}
            u_1 & \cdots & u_{s-1} & \sqrt{1-\veps^2} u_s + \veps q_t & u_{s+1} & \cdots & u_p
        \end{pmatrix} \Sigma V^*.
    \end{equation*}
    One could verify that $\tX\in \ob(n,p)$ and $\fnorm{\tX - X}=O(\veps)$.
    The difference between objective values at $\tX$ and $X$ is bounded as,
    \begin{equation*}
        g_{\mu}(\tX) - g_{\mu}(X) = \frac{1}{2} \veps^2 (\lambda_p - \tlambda) \sigma_s^2 < 0.
    \end{equation*}
    Thus, if $U$ is not spanned by eigenvectors corresponding to the $p$
    smallest eigenvalues, $X$ is not a local minimizer.

    Therefore, any local minimizer of qTPM takes the form $X=Q_p (I -
        \frac{1}{\mu}(\Lambda_p - \bar{\Lambda}_p))^{1/2} V^*$, where $Q_p$ and
    $\Lambda_p$ are eigenpairs of $A$ corresponding to the $p$ smallest
    eigenvalues, $\bar{\Lambda}_p=\frac{1}{p}\tr(\Lambda_p) I$. Substituting
    $X=Q_p (I - \frac{1}{\mu}(\Lambda_p - \bar{\Lambda}_p))^{1/2} V^*$ into
    the energy functional $g_{\mu}(X)$ yields the same objective value
    \begin{equation*}
        g_{\mu} \left( Q_p(I - \frac{1}{\mu}(\Lambda_p - \bar{\Lambda}_p))^{1/2} V^* \right)
        = \frac{\mu}{4} \tr\left((I + \frac{1}{\mu}\bar{\Lambda}_p)^2 - \frac{1}{\mu^2} \Lambda_p^2 \right).
    \end{equation*}
    Again, since oblique manifold $\ob(n, p)$ is a bounded and closed set
    and the objective function $g_{\mu}(X)$ is smooth with respect to $X$,
    there exists a global minimum of $g_{\mu}(X)$ over $\ob(n,p)$. Thus, we
    conclude that any matrix in the form $X=Q_p (I - \frac{1}{\mu}(\Lambda_p
        - \bar{\Lambda}_p))^{1/2} V^*$ is a global minimizer of qTPM and there is
    no other local minimizer.
\end{proof}

\section{Proofs of qL1M} \label{sec:proof_ql1m}

\begin{lemma} \label{lem:descent_d_ql1m}
    Given matrix $X=(x_1, \cdots, x_p)\in\bbC^{n\times p}$ and vector
    $x_0\in \bbC^{n\times 1}$ such that $x_0 \notin \text{span}(X)$,
    there exists $d\in \bbC^{n\times 1}$ such that
    \begin{equation*}
        d^* d = 1, \quad d^* X =0, \quad d^* x_0 < 0.
    \end{equation*}
    Additionally, if for $i,j =0, 1, \cdots, p$, $i\neq j$,
    \begin{equation} \label{eq:descent_d_condition}
        x_i^* x_i =1, \quad |x_i^* x_j| \leq \veps_0,
    \end{equation}
    where $\veps_0 = \frac{1}{4p}$, then there exists
    $d\in\bbC^{n\times 1}$ such that
    \begin{equation*}
        d^* d = 1, \quad d^* X = 0, \quad d^* x_0 < -\frac{1}{2}.
    \end{equation*}
\end{lemma}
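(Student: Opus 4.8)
The plan is to take $d$ to be a suitably normalized, sign‑flipped residual of $x_0$ under orthogonal projection onto $\text{span}(X)$: the first claim is then immediate, and the second follows by upgrading the qualitative separation to a quantitative one using the conditioning of the Gram matrix $\cgt{X}X$. For the first claim, let $P$ be the orthogonal projector onto $\text{span}(X)$ and set $r := x_0 - Px_0$. Since $x_0 \notin \text{span}(X)$ we have $r \neq 0$; by construction $r$ is orthogonal to every column of $X$, so $\cgt{d}X = 0$ once we normalize. Because $Px_0 \in \text{span}(X)$ and $r \perp \text{span}(X)$, we get $\cgt{r}x_0 = \cgt{r}r + \cgt{r}(Px_0) = \twonorm{r}^2 > 0$; note this is real and positive, so no phase ambiguity arises. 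Taking $d := -r/\twonorm{r}$ yields $\cgt{d}d = 1$, $\cgt{d}X = 0$, and $\cgt{d}x_0 = -\twonorm{r} < 0$, which proves the first part.

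For the second part, I would first observe that \eqref{eq:descent_d_condition} restricted to indices $1,\dots,p$ says the Gram matrix $G := \cgt{X}X$ has unit diagonal and off‑diagonal entries of modulus at most $\veps_0 = \tfrac{1}{4p}$; Gershgorin's disc theorem then gives $\lambda_{\min}(G) \geq 1 - (p-1)\veps_0 > \tfrac34 > 0$, so $X$ has full column rank and the construction above applies with $P = X G^{-1}\cgt{X}$. I would then estimate $\twonorm{r}^2 = \twonorm{x_0}^2 - \twonorm{Px_0}^2 = 1 - \cgt{x_0}X\,G^{-1}\cgt{X}x_0$ from below, using $\lambda_{\max}(G^{-1}) = 1/\lambda_{\min}(G) < \tfrac43$ together with $\twonorm{\cgt{X}x_0}^2 = \sum_{j=1}^{p}|\cgt{x_j}x_0|^2 \leq p\veps_0^2 = \tfrac{1}{16p}$, giving $\twonorm{r}^2 \geq 1 - \tfrac43\cdot\tfrac{1}{16p} = 1 - \tfrac{1}{12p} \geq \tfrac{11}{12}$. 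Hence $\twonorm{r} > \tfrac12$, and the same $d = -r/\twonorm{r}$ satisfies $\cgt{d}d = 1$, $\cgt{d}X = 0$, and $\cgt{d}x_0 = -\twonorm{r} < -\tfrac12$, as claimed.

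The argument is elementary linear algebra, and I do not expect a genuine obstacle; the one point that needs care is making the qualitative separation $x_0 \notin \text{span}(X)$ quantitative, and this is exactly where the choice $\veps_0 = \tfrac{1}{4p}$ enters — it forces $G = \cgt{X}X$ to be well conditioned (via Gershgorin), so that the projection $Px_0$ is short and the residual $r$ stays close to unit length. A secondary subtlety, that $\cgt{d}x_0$ must be a negative real number rather than a complex scalar, is handled automatically by the projection construction, since $\cgt{r}x_0 = \twonorm{r}^2$; no explicit phase rotation of $d$ is required.
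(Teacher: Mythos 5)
Your proposal is correct and follows essentially the same route as the paper: take $d$ to be the negated, normalized residual of $x_0$ under orthogonal projection onto $\mathrm{span}(X)$, then use Gershgorin on the Gram matrix $\cgt{X}X$ (with $\veps_0=\tfrac{1}{4p}$) to make the separation quantitative. The only difference is bookkeeping --- you bound $\lambda_{\min}(\cgt{X}X)$ and the quadratic form $\cgt{x_0}X(\cgt{X}X)^{-1}\cgt{X}x_0$ directly, while the paper bounds $\twonorm{X}$ and $\twonorm{(\cgt{X}X)^{-1}}$ separately; your estimate is in fact slightly sharper, and both yield $\cgt{d}x_0 < -\tfrac12$.
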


\begin{proof}
    Denote the orthogonal projection of $x_0$ over $\text{span}(X)$ as
    $s=XX^{\dagger}x_0$. Since  $x_0\notin \text{span}(X)$, let
    \begin{equation*}
        d = \frac{s- x_0}{\twonorm{s - x_0}},
    \end{equation*}
    then we have
    \begin{equation*}
        d^* d = 1, \quad d^* X = 0, \quad d^* x_0 = d^*(s- \twonorm{s-x_0}d) = -\twonorm{s-x_0} < 0,
    \end{equation*}
    which completes the proof of the first statement.

    Next we give a lower bound of $\twonorm{s-x_0}$ based on condition
    \eqref{eq:descent_d_condition}. According to Gerschgorin disk
    theorem, eigenvalues of $X^*X$ lie in disks
    \begin{equation*}
        G_i = \{ z\in\bbC : |z-1| \leq \sum_{j\neq i}^{p} |(X^* X)_{ij}| \leq p\veps_0 < \frac{1}{2} \}, \quad i=1, \cdots, p.
    \end{equation*}
    Thus, $X^*X$ is non-singular and all the eigenvalues of $X^* X$ lie
    in $[\frac{1}{2}, \frac{3}{2}]$. Moreover, all the eigenvalues of
    $(X^*X)^{-1}$ lie in $[\frac{2}{3}, 2]$. Therefore,
    \begin{equation*}
        \|X\|_2 \leq \sqrt{\frac{3}{2}}, \quad \|(X^* X)^{-1}\|_2 \leq 2.
    \end{equation*}
    Note that $|(X^*x_0)_j| = |x_j^* x_0| \leq \veps_0$ for $j=1,
        \ldots, p$. At this time, we have
    \begin{equation*}
        \twonorm{s} = \twonorm{X(X^*X)^{-1}X^* x_0}
        \leq \twonorm{X} \twonorm{(X^*X)^{-1}} \twonorm{X^* x_0}
        \leq \sqrt{6p} \veps_0 \leq \frac{\sqrt{3}}{2}.
    \end{equation*}
    Hence,
    \begin{equation*}
        d^*x_0 = -\twonorm{s-x_0} = -\left( \twonorm{x_0}^2 - \twonorm{s}^2 \right)^{1/2} \leq -\frac{1}{2},
    \end{equation*}
    which completes the proof of the second statement.
\end{proof}

\begin{proof}[Proof of \cref{thm:local_min_ql1m}]

    Motivated by the exact property of $l_1$ penalty, we complete the proof
    by showing that any local minimizer of qL1M \eqref{eq:ql1m} actually
    satisfies the orthogonal constraint $\cgt{X}X=I$ and thus becomes the
    local minimizer of classical trace minimization method. Specifically, for
    any $X$ not satisfying the orthogonal constraint, one can always find a
    descent direction nearby over the oblique manifold, indicating that it
    cannot be a local minimizer.

    Denote $X =\left(x_1, \cdots, x_p\right)$. Assume that $X$ does not
    satisfy the orthogonal constraint, i.e., there exists columns $x_i,
        x_j$, $i\neq j$ such that $\cgt{x_i}x_j\neq 0$. Now we find a descent
    direction for such $X$ based on the value of $|\cgt{x}_ix_j|$. Given a
    constant $\veps_0 = \frac{1}{4p}$, we divide the discussion into two
    scenarios.

    \emph{Case 1: There exists $i, j\in\{1, \ldots, p\}$, $i\neq j$ such
        that $|\cgt{x_i}x_j| > \veps_0$.}

    Without loss of generality, we assume that $|\cgt{x_1} x_2| > \veps_0$.
    Consider a perturbation of $X$ over oblique manifold denoted as $\tX
        =(\tx_1, \cdots, \tx_p)$, such that
    \begin{equation*}
        \tx_1 = \sqrt{1 -\veps^2} x_1 + \veps d, \quad \tx_j = x_j, ~j = 2, 3 \ldots, p,
    \end{equation*}
    with $\veps > 0$ sufficiently small and $d\in \bbC^{n\times 1}$ to be
    determined later. Note that $p < n$, if $Ax_1 \in \text{span}(X)$, there
    exists $d$ such that
    \begin{equation*}
        \cgt{d}d=1, \quad \cgt{d}X=0, \quad \text{and} \quad \cgt{d}Ax_1 =0.
    \end{equation*}
    If $Ax_1 \notin \text{span}(X)$, according to \cref{lem:descent_d_ql1m},
    there exists $d$ such that
    \begin{equation*}
        \cgt{d}d=1, \quad \cgt{d}X=0, \quad \text{and} \quad \cgt{d}Ax_1 < 0.
    \end{equation*}
    In both cases we have
    \begin{equation*}
        \cgt{\tx_1} \tx_1 = 1, \quad
        \cgt{\tx_1}\tx_j = \sqrt{1-\veps^2}\cgt{x_1}x_j, ~j=2, \ldots, p,
    \end{equation*}
    and
    \begin{equation*}
        \cgt{\tx_1}A\tx_1 = (1-\veps^2) \cgt{x_1}Ax_1 + 2\veps\sqrt{1-\veps^2} \rep(\cgt{d}Ax_1) + \veps^2 \cgt{d} A d.
    \end{equation*}
    Note that $\lambda_{\min}(A) \leq \cgt{x_1} A x_1 \leq
        \lambda_{\max}(A)$ and $\lambda_{\min}(A) \leq \cgt{d} A d \leq
        \lambda_{\max}(A)$, together with $d^* A x_1 \leq 0$, we have
    \begin{align*}
        \qquad & E_1(\tX) - E_1(X) \\
        = & -\veps^2 \cgt{x_1}Ax_1 + 2\veps\sqrt{1-\veps^2}\cgt{d}Ax_1 + \veps^2 \cgt{d} A d + \mu_1 \sum_{j\neq 1} (\sqrt{1-\veps^2} - 1) |\cgt{x_1}x_j| \\
        \leq & \left(\lambda_{\max}(A) - \lambda_{\min}(A)\right) \veps^2 + \mu_1 (\sqrt{1 - \veps^2} - 1) \veps_0                                        \\
        \leq & \left( \lambda_{\max}(A) - \lambda_{\min}(A) - \mu_1 \frac{\veps_0}{2} \right) \veps^2,
    \end{align*}
    where we use $\sqrt{1-\veps^2} - 1 \leq -\frac{\veps^2}{2}$ for $\veps >
        0$ sufficiently small in the last inequality. Further, from
    \begin{equation*}
        \mu_1 > 16 p \|A\|_2 = \frac{4}{\veps_0} \|A\|_2 \geq \frac{2}{\veps_0}(\lambda_{\max}(A) - \lambda_{\min}(A)),
    \end{equation*}
    we have $E_1(\tX) - E_1(X) <0$, while
    \begin{equation*}
        \fnorm{\tX - X} = \left\|(\sqrt{1-\veps^2} - 1) x_1 + \veps d \right\|_2 \leq \veps.
    \end{equation*}
    Therefore, such $X$ cannot be a local minimizer.

    \emph{Case 2: $|\cgt{x_i}x_j| \leq \veps_0$, for $i, j=1, \ldots, p$,
        $i\neq j$.}

    Without loss of generality, we assume that $|\cgt{x_1}x_2| \neq 0$. Let
    $\mi=\sqrt{-1}$, denote
    \begin{equation*}
        \cgt{x_1}x_2 = a + \mi b, \quad \text{where} \quad
        a=\rep(\cgt{x_1}x_2), b=\imp(\cgt{x_1}x_2).
    \end{equation*}
    From $(\frac{a}{\sqrt{a^2+b^2}})^2 + (\frac{b}{\sqrt{a^2+b^2}})^2 = 1$,
    we know that one of $|\frac{a}{\sqrt{a^2+b^2}}|$ or
    $|\frac{b}{\sqrt{a^2+b^2}}|$ is greater than $\frac{1}{\sqrt{2}}$. This
    condition further separates the discussion into two subcases.

    We firstly address the case when $|\frac{a}{\sqrt{a^2+b^2}}| \geq
        \frac{1}{\sqrt{2}}$. Here we assume $a=\rep(\cgt{x_1}x_2)>0$, the
    analysis for $a <0$ can be carried in the same manner. According to
    \cref{lem:descent_d_ql1m}, there exists $d\in \bbC^{n\times 1}$ such
    that
    \begin{equation*}
        \cgt{d}d =1, \quad \cgt{d}x_1 = 0, \quad \cgt{d}x_2 < -\frac{1}{2}, \quad \text{and} \quad \cgt{d}x_j = 0, ~j =3, \ldots, p.
    \end{equation*}
    Consider a perturbation of $X$ over oblique manifold, which is denoted
    as $\tX =(\tilde{x}_1, \cdots, \tilde{x}_p)$, such that
    \begin{equation*}
        \tilde{x}_1=\sqrt{1-\veps^2}x_1 + \veps d, \quad \tilde{x}_j = x_j, ~j =2, \ldots, p.
    \end{equation*}
    Denote $c=\cgt{d}x_2 < 0$, one has
    \begin{align*}
         & \cgt{\tilde{x}_1} \tilde{x}_1 = 1,                                                                                 \\
         & \cgt{\tilde{x}_1}\tx_2 = \sqrt{1-\veps^2}\cgt{x_1}x_2 + \veps \cgt{d}x_2 = \sqrt{1-\veps^2} (a + \mi b) + \veps c, \\
         & \cgt{\tilde{x}_1}\tx_j = \sqrt{1-\veps^2}\cgt{x_1}x_j, \quad j=3, \ldots, p.
    \end{align*}
    Note that $a > 0$, $c < 0$, let $\veps > 0$ be small enough such that
    $\sqrt{1-\veps^2} a + \veps c > 0$. Thus we have
    \begin{equation*}
        |\cgt{\tilde{x}_1} \tilde{x}_2| = |\sqrt{1-\veps^2}\cgt{x_1}x_2 + \veps \cgt{d}x_2| < |\cgt{x_1}x_2|.
    \end{equation*}
    Hence,
    \begin{align*}
        |\sqrt{1-\veps^2}\cgt{x_1}x_2 + \veps \cgt{d}x_2| - |\cgt{x_1}x_2|
         & = \frac{|\sqrt{1-\veps^2}\cgt{x_1}x_2 + \veps \cgt{d}x_2|^2 - |\cgt{x_1}x_2|^2}{|\sqrt{1-\veps^2}\cgt{x_1}x_2 + \veps \cgt{d}x_2| + |\cgt{x_1}x_2|} \\
         & \leq \frac{|\sqrt{1-\veps^2}\cgt{x_1}x_2 + \veps \cgt{d}x_2|^2 - |\cgt{x_1}x_2|^2}{2|\cgt{x_1}x_2|}                                                 \\
         & = \frac{|\sqrt{1-\veps^2}a + \veps c + \mi \sqrt{1-\veps^2} b|^2 - |a+\mi b|^2}{2\sqrt{a^2+b^2}}                                                    \\
         & = \frac{1}{2\sqrt{a^2+b^2}} \left( \veps^2(c^2 - a^2 - b^2) + 2\veps\sqrt{1-\veps^2} ac \right)                                                     \\
         & \leq \frac{\veps^2c^2}{2\sqrt{a^2+b^2}} + \frac{\veps c}{2},
    \end{align*}
    where in the last equality we use $\frac{a}{\sqrt{a^2+b^2}} \geq
        \frac{1}{\sqrt{2}}$ and $\sqrt{1-\veps^2} \geq \frac{1}{\sqrt{2}}$ for
    $\veps > 0$ sufficiently small. Additionally, from
    \begin{equation*}
        \cgt{\tilde{x}_1} A \tilde{x}_1 = (1 - \veps^2)\cgt{x_1} A x_1 + 2\veps\sqrt{1-\veps^2} \rep( \cgt{d}Ax_1) + \veps^2 \cgt{d} A d,
    \end{equation*}
    we have
    \begin{align*}
        E_1(\tX) - E_1(X) & \leq -\veps^2 \cgt{x_1} A x_1 + 2\veps\sqrt{1-\veps^2} \rep( \cgt{d}Ax_1) + \veps^2 \cgt{d} A d                                       \\
                          & \quad + \mu_1 (\frac{\veps^2c^2}{2\sqrt{a^2+b^2}} + \frac{\veps c}{2}) +  \mu_1 \sum_{j = 3}^p (\sqrt{1-\veps^2} - 1) |\cgt{x_1}x_j|.
    \end{align*}
    Note that
    \begin{equation*}
        |\cgt{d}Ax_1| \leq \|d\|_2 \cdot \|A\|_2 \cdot \|x_1\|_2 \leq \|A\|_2,
    \end{equation*}
    together with $c \leq -\frac{1}{2}$ we have
    \begin{equation} \label{eq:energy_diff_ql1m}
        E_1(\tX) - E_1(X) \leq \veps(2\|A\|_2 - \frac{\mu_1}{4}) + \veps^2 (\cgt{d}Ad - \cgt{x_1}Ax_1 + \frac{\mu_1 c^2}{2\sqrt{a^2+b^2}}).
    \end{equation}
    Since $\mu_1 > 16 p \|A\|_2$, it implies that $2\|A\|_2 -
        \frac{\mu_1}{4} < 0$. Besides, the right-hand side of
    \eqref{eq:energy_diff_ql1m} will be dominated by the linear term when
    $\veps > 0$ is sufficiently small. Hence, we have $E_1(\tX) - E_1(X) <
        0$ with $\fnorm{\tX-X}\leq \veps$. Therefore, such $X$ cannot be a
    local minimizer.

    Now we turn to the case when $|\frac{b}{\sqrt{a^2 + b^2}}| \geq
        \frac{1}{\sqrt{2}}$. Without loss of generality, we assume that $b > 0$,
    the analysis for $b < 0$ can be carried in the same way. Again, we can
    find a direction $d$ satisfying
    \begin{equation*}
        \cgt{d}d =1, \quad \cgt{d}x_1 = 0, \quad \cgt{d}x_2 < -\frac{1}{2}, \quad \text{and} \quad \cgt{d}x_j = 0, ~j =3, \ldots, p.
    \end{equation*}
    Let $\tilde{d}=-\mi d$, then $\cgt{\tilde{d}}x_2 = \mi \cgt{d}x_2$. In
    the same manner, we construct a perturbation $\tX$ of $X$ such that
    \begin{equation*}
        \tilde{x}_1=\sqrt{1-\veps^2}x_1 + \veps \tilde{d}, \quad \tilde{x}_j = x_j, ~j=2, \ldots, p.
    \end{equation*}
    Denote $c=\cgt{d}x_2 < 0$. At this time, we have
    \begin{align*}
         & \cgt{\tilde{x}_1} \tilde{x}_1 = 1,                                                                                             \\
         & \cgt{\tilde{x}_1}\tx_2 = \sqrt{1-\veps^2}\cgt{x_1}x_2 + \veps \cgt{\tilde{d}}x_2 = \sqrt{1-\veps^2} (a + \mi b) + \mi \veps c, \\
         & \cgt{\tilde{x}_1}\tx_j = \sqrt{1-\veps^2}\cgt{x_1}x_j, \quad j=3, \ldots, p.
    \end{align*}
    Since $b=\imp(\cgt{x_1}x_2) > 0$, let $\veps > 0$ be small enough such that
    $\sqrt{1-\veps^2}b + \veps c > 0$, we have
    \begin{align*}
        |\sqrt{1-\veps^2}\cgt{x_1}x_2 + \veps \cgt{\tilde{d}}x_2| - |\cgt{x_1}x_2| \leq \frac{\veps^2c^2}{2\sqrt{a^2+b^2}} + \frac{\veps c}{2}.
    \end{align*}
    Following the same discussion we have $E_1(\tX) < E_1(X)$ with
    $\fnorm{\tX - X} \leq \veps$ and thus such $X$ cannot be the local
    minimizer of $E_1(X)$.

    We have shown that any local minimizer of qL1M satisfies the orthogonal
    constraint and hence is a local minimizer of classical trace
    minimization method,
    \begin{equation*}
        \min_{X\in\bbC^{n\times p}} \tr(\cgt{X}AX), \quad \text{s.t.} \quad \cgt{X}X=I.
    \end{equation*}
    The analysis for the trace minimization method \cite{sameh1982trace}
    tells that local minimizers take the form $X=Q_p V^*$, with
    $V\in\bbC^{p\times p}$ unitary and $Q_p$ are eigenvectors corresponding
    to the $p$ smallest eigenvalues $\Lambda_p$ of matrix $A$.
    Furthermore, by substituting $X=Q_p V^*$ into the energy functional
    $E_1(X)$ we obtain the same objective value $E_1(Q_p
        V^*)=\tr(\Lambda_p)$. Thus, any local minimum of qL1M is also a global
    minimum.

    On the other hand, note that oblique manifold $\ob(n,p)$ is a bounded
    and closed set and the objective function $E_1(X)$ is continuous with
    respect to $X$. Thus, qL1M must possess a global minimum over
    $\ob(n,p)$, which implies that any matrix in the form of $X=Q_p V^*$ is
    a global minimizer of qL1M and there are no other local minimizers.
\end{proof}

\bibliographystyle{siam}
\bibliography{qES.bib}

\begin{thebibliography}{10}

\bibitem{abrams1999quantum}
{\sc D.~S. Abrams and S.~Lloyd}, {\em Quantum algorithm providing exponential
  speed increase for finding eigenvalues and eigenvectors}, Physical Review
  Letters, 83 (1999), p.~5162.

\bibitem{arute2019quantum}
{\sc F.~Arute, K.~Arya, R.~Babbush, D.~Bacon, J.~C. Bardin, R.~Barends,
  R.~Biswas, S.~Boixo, F.~G. Brandao, D.~A. Buell, et~al.}, {\em Quantum
  supremacy using a programmable superconducting processor}, Nature, 574
  (2019), pp.~505--510.

\bibitem{aspuru2005simulated}
{\sc A.~Aspuru-Guzik, A.~D. Dutoi, P.~J. Love, and M.~Head-Gordon}, {\em
  Simulated quantum computation of molecular energies}, Science, 309 (2005),
  pp.~1704--1707.

\bibitem{bauer2020quantum}
{\sc B.~Bauer, S.~Bravyi, M.~Motta, and G.~K.-L. Chan}, {\em Quantum algorithms
  for quantum chemistry and quantum materials science}, Chemical Reviews, 120
  (2020), pp.~12685--12717.

\bibitem{bierman2022quantum}
{\sc J.~Bierman, Y.~Li, and J.~Lu}, {\em Quantum orbital minimization method
  for excited states calculation on a quantum computer}, Journal of Chemical
  Theory and Computation, 18 (2022), pp.~4674--4689.

\bibitem{blunt2015excited}
{\sc N.~Blunt, S.~D. Smart, G.~H. Booth, and A.~Alavi}, {\em An excited-state
  approach within full configuration interaction quantum {Monte} {Carlo}}, The
  Journal of Chemical Physics, 143 (2015).

\bibitem{chen2024continuity}
{\sc H.~Chen and Y.~Li}, {\em On the continuity of {Schur}-{Horn} mapping},
  arXiv preprint arXiv:2407.00701,  (2024).

\bibitem{colless2018computation}
{\sc J.~I. Colless, V.~V. Ramasesh, D.~Dahlen, M.~S. Blok, M.~E.
  Kimchi-Schwartz, J.~R. McClean, J.~Carter, W.~A. de~Jong, and I.~Siddiqi},
  {\em Computation of molecular spectra on a quantum processor with an
  error-resilient algorithm}, Physical Review X, 8 (2018), p.~011021.

\bibitem{deglmann2015application}
{\sc P.~Deglmann, A.~Sch{\"a}fer, and C.~Lennartz}, {\em Application of quantum
  calculations in the chemical industry—an overview}, International Journal
  of Quantum Chemistry, 115 (2015), pp.~107--136.

\bibitem{ganzhorn2019gate}
{\sc M.~Ganzhorn, D.~J. Egger, P.~Barkoutsos, P.~Ollitrault, G.~Salis, N.~Moll,
  M.~Roth, A.~Fuhrer, P.~Mueller, S.~Woerner, et~al.}, {\em Gate-efficient
  simulation of molecular eigenstates on a quantum computer}, Physical Review
  Applied, 11 (2019), p.~044092.

\bibitem{gao2024weighted}
{\sc W.~Gao, Y.~Li, and H.~Shen}, {\em Weighted trace-penalty minimization for
  full configuration interaction}, SIAM Journal on Scientific Computing, 46
  (2024), pp.~A179--A203.

\bibitem{grimsley2019adaptive}
{\sc H.~R. Grimsley, S.~E. Economou, E.~Barnes, and N.~J. Mayhall}, {\em An
  adaptive variational algorithm for exact molecular simulations on a quantum
  computer}, Nature communications, 10 (2019), p.~3007.

\bibitem{higgott2019variational}
{\sc O.~Higgott, D.~Wang, and S.~Brierley}, {\em Variational quantum
  computation of excited states}, Quantum, 3 (2019), p.~156.

\bibitem{holmes2017excited}
{\sc A.~A. Holmes, C.~Umrigar, and S.~Sharma}, {\em Excited states using
  semistochastic heat-bath configuration interaction}, The Journal of Chemical
  Physics, 147 (2017).

\bibitem{javadi2024quantum}
{\sc A.~Javadi-Abhari, M.~Treinish, K.~Krsulich, C.~J. Wood, J.~Lishman,
  J.~Gacon, S.~Martiel, P.~D. Nation, L.~S. Bishop, A.~W. Cross, et~al.}, {\em
  Quantum computing with {Qiskit}}, arXiv preprint arXiv:2405.08810,  (2024).

\bibitem{jones2019variational}
{\sc T.~Jones, S.~Endo, S.~McArdle, X.~Yuan, and S.~C. Benjamin}, {\em
  Variational quantum algorithms for discovering {Hamiltonian} spectra},
  Physical Review A, 99 (2019), p.~062304.

\bibitem{kandala2017hardware}
{\sc A.~Kandala, A.~Mezzacapo, K.~Temme, M.~Takita, M.~Brink, J.~M. Chow, and
  J.~M. Gambetta}, {\em Hardware-efficient variational quantum eigensolver for
  small molecules and quantum magnets}, nature, 549 (2017), pp.~242--246.

\bibitem{kitaev1995quantum}
{\sc A.~Y. Kitaev}, {\em Quantum measurements and the {Abelian} stabilizer
  problem}, arXiv preprint quant-ph/9511026,  (1995).

\bibitem{lee2018generalized}
{\sc J.~Lee, W.~J. Huggins, M.~Head-Gordon, and K.~B. Whaley}, {\em Generalized
  unitary coupled cluster wave functions for quantum computation}, Journal of
  chemical theory and computation, 15 (2018), pp.~311--324.

\bibitem{li2020optimal}
{\sc Y.~Li and J.~Lu}, {\em Optimal orbital selection for full configuration
  interaction ({OptOrbFCI}): Pursuing the basis set limit under a budget},
  Journal of Chemical Theory and Computation, 16 (2020), pp.~6207--6221.

\bibitem{li2019coord}
{\sc Y.~Li, J.~Lu, and Z.~Wang}, {\em {Coordinate-wise} descent methods for
  leading eigenvalue problem}, SIAM Journal on Scientific Computing, 41 (2019),
  pp.~A2681--A2716.

\bibitem{liu2015efficient}
{\sc X.~Liu, Z.~Wen, and Y.~Zhang}, {\em An efficient {Gauss}--{Newton}
  algorithm for symmetric low-rank product matrix approximations}, SIAM Journal
  on Optimization, 25 (2015), pp.~1571--1608.

\bibitem{LU201787}
{\sc J.~Lu and K.~Thicke}, {\em Orbital minimization method with {$\ell_1$}
  regularization}, Journal of Computational Physics, 336 (2017), pp.~87--103.

\bibitem{mauri1993orbital}
{\sc F.~Mauri, G.~Galli, and R.~Car}, {\em Orbital formulation for
  electronic-structure calculations with linear system-size scaling}, Physical
  Review B, 47 (1993), p.~9973.

\bibitem{mcardle2020quantum}
{\sc S.~McArdle, S.~Endo, A.~Aspuru-Guzik, S.~C. Benjamin, and X.~Yuan}, {\em
  Quantum computational chemistry}, Reviews of Modern Physics, 92 (2020),
  p.~015003.

\bibitem{mcclean2017hybrid}
{\sc J.~R. McClean, M.~E. Kimchi-Schwartz, J.~Carter, and W.~A. De~Jong}, {\em
  Hybrid quantum-classical hierarchy for mitigation of decoherence and
  determination of excited states}, Physical Review A, 95 (2017), p.~042308.

\bibitem{mcclean2016theory}
{\sc J.~R. McClean, J.~Romero, R.~Babbush, and A.~Aspuru-Guzik}, {\em The
  theory of variational hybrid quantum-classical algorithms}, New Journal of
  Physics, 18 (2016), p.~023023.

\bibitem{nakanishi2019subspace}
{\sc K.~M. Nakanishi, K.~Mitarai, and K.~Fujii}, {\em Subspace-search
  variational quantum eigensolver for excited states}, Physical Review
  Research, 1 (2019), p.~033062.

\bibitem{nocedal1999numerical}
{\sc J.~Nocedal and S.~J. Wright}, {\em Numerical optimization}, Springer,
  1999.

\bibitem{ollitrault2020quantum}
{\sc P.~J. Ollitrault, A.~Kandala, C.-F. Chen, P.~K. Barkoutsos, A.~Mezzacapo,
  M.~Pistoia, S.~Sheldon, S.~Woerner, J.~M. Gambetta, and I.~Tavernelli}, {\em
  Quantum equation of motion for computing molecular excitation energies on a
  noisy quantum processor}, Physical Review Research, 2 (2020), p.~043140.

\bibitem{ordejon1993unconstrained}
{\sc P.~Ordej{\'o}n, D.~A. Drabold, M.~P. Grumbach, and R.~M. Martin}, {\em
  Unconstrained minimization approach for electronic computations that scales
  linearly with system size}, Physical Review B, 48 (1993), p.~14646.

\bibitem{parrish2019quantum}
{\sc R.~M. Parrish, E.~G. Hohenstein, P.~L. McMahon, and T.~J. Mart{\'\i}nez},
  {\em Quantum computation of electronic transitions using a variational
  quantum eigensolver}, Physical review letters, 122 (2019), p.~230401.

\bibitem{peruzzo2014variational}
{\sc A.~Peruzzo, J.~McClean, P.~Shadbolt, M.-H. Yung, X.-Q. Zhou, P.~J. Love,
  A.~Aspuru-Guzik, and J.~L. O’brien}, {\em A variational eigenvalue solver
  on a photonic quantum processor}, Nature communications, 5 (2014), p.~4213.

\bibitem{preskill2018quantum}
{\sc J.~Preskill}, {\em Quantum computing in the {NISQ} era and beyond},
  Quantum, 2 (2018), p.~79.

\bibitem{romero2018strategies}
{\sc J.~Romero, R.~Babbush, J.~R. McClean, C.~Hempel, P.~J. Love, and
  A.~Aspuru-Guzik}, {\em Strategies for quantum computing molecular energies
  using the unitary coupled cluster ansatz}, Quantum Science and Technology, 4
  (2018), p.~014008.

\bibitem{ryabinkin2018qubit}
{\sc I.~G. Ryabinkin, T.-C. Yen, S.~N. Genin, and A.~F. Izmaylov}, {\em Qubit
  coupled cluster method: a systematic approach to quantum chemistry on a
  quantum computer}, Journal of chemical theory and computation, 14 (2018),
  pp.~6317--6326.

\bibitem{sameh1982trace}
{\sc A.~H. Sameh and J.~A. Wisniewski}, {\em A trace minimization algorithm for
  the generalized eigenvalue problem}, SIAM Journal on Numerical Analysis, 19
  (1982), pp.~1243--1259.

\bibitem{santagati2017finding}
{\sc R.~Santagati, J.~Wang, A.~Gentile, S.~Paesani, N.~Wiebe, J.~McClean,
  D.~Bonneau, J.~Silverstone, S.~Morley-Short, P.~Shadbolt, et~al.}, {\em
  Finding excited states of physical {Hamiltonians} on a silicon quantum
  photonic device}, in Frontiers in Optics, Optica Publishing Group, 2017,
  pp.~FM4E--2.

\bibitem{santagati2018witnessing}
{\sc R.~Santagati, J.~Wang, A.~A. Gentile, S.~Paesani, N.~Wiebe, J.~R. McClean,
  S.~Morley-Short, P.~J. Shadbolt, D.~Bonneau, J.~W. Silverstone, et~al.}, {\em
  Witnessing eigenstates for quantum simulation of {Hamiltonian} spectra},
  Science advances, 4 (2018), p.~eaap9646.

\bibitem{schriber2017adaptive}
{\sc J.~B. Schriber and F.~A. Evangelista}, {\em Adaptive configuration
  interaction for computing challenging electronic excited states with tunable
  accuracy}, Journal of chemical theory and computation, 13 (2017),
  pp.~5354--5366.

\bibitem{sun2018pyscf}
{\sc Q.~Sun, T.~C. Berkelbach, N.~S. Blunt, G.~H. Booth, S.~Guo, Z.~Li, J.~Liu,
  J.~D. McClain, E.~R. Sayfutyarova, S.~Sharma, S.~Wouters, and G.~K.-L. Chan},
  {\em {PySCF}: the {Python}-based simulations of chemistry framework}, Wiley
  Interdisciplinary Reviews: Computational Molecular Science, 8 (2018),
  p.~e1340.

\bibitem{tilly2022variational}
{\sc J.~Tilly, H.~Chen, S.~Cao, D.~Picozzi, K.~Setia, Y.~Li, E.~Grant,
  L.~Wossnig, I.~Rungger, G.~H. Booth, and J.~Tennyson}, {\em The variational
  quantum eigensolver: a review of methods and best practices}, Physics
  Reports, 986 (2022), pp.~1--128.

\bibitem{wang2019coordinate}
{\sc Z.~Wang, Y.~Li, and J.~Lu}, {\em Coordinate descent full configuration
  interaction}, Journal of chemical theory and computation, 15 (2019),
  pp.~3558--3569.

\bibitem{wen2016trace}
{\sc Z.~Wen, C.~Yang, X.~Liu, and Y.~Zhang}, {\em Trace-penalty minimization
  for large-scale eigenspace computation}, Journal of Scientific Computing, 66
  (2016), pp.~1175--1203.

\bibitem{zhong2020quantum}
{\sc H.-S. Zhong, H.~Wang, Y.-H. Deng, M.-C. Chen, L.-C. Peng, Y.-H. Luo,
  J.~Qin, D.~Wu, X.~Ding, Y.~Hu, et~al.}, {\em Quantum computational advantage
  using photons}, Science, 370 (2020), pp.~1460--1463.

\end{thebibliography}

\end{document}